\documentclass[10pt,hidelinks]{article}
\usepackage[T1]{fontenc}
\usepackage{amsmath,amsfonts,amsthm,mathrsfs,amssymb}
\usepackage{cite}
\usepackage{multirow}
\usepackage{algorithmic,algorithm}
\usepackage{graphicx,float}
\usepackage{placeins}
\usepackage{color,bm}
\usepackage{xcolor}

\usepackage{indentfirst}
\usepackage{subfig}
\usepackage{tabularx}
\usepackage{tikz}
\usepackage[bookmarks=true]{hyperref}
\usepackage{url}

\graphicspath{{./fig/}} 
\numberwithin{equation}{section}
\newcommand{\R}{{\mathbb R}}

\newcommand{\be}{\begin{eqnarray}}
\newcommand{\ben}{\begin{eqnarray*}}
\newcommand{\en}{\end{eqnarray}}
\newcommand{\enn}{\end{eqnarray*}}

\newtheorem{theorem}{Theorem}[section]

\newtheorem{corollary}[theorem]{Corollary}
\newtheorem{definition}[theorem]{Definition}
\newtheorem{remark}[theorem]{Remark}

\definecolor{Taor1}{rgb}{0.000,0.000,0.000}
\definecolor{Taor25}{rgb}{1.000,0.000,0.000}

\begin{document}
\renewcommand{\theequation}{\arabic{section}.\arabic{equation}}
\begin{titlepage}
\title{From multi-layered problems to multiple two-layered problems: a novel frequency-time hybrid multiple-scattering integral equation solver}
\author{
Shuai Pan\thanks{School of Mathematical Sciences, University of Electronic Science and Technology of China, Chengdu, Sichuan 611731, China. Email:{\tt pans@std.uestc.edu.cn}},\;
Tao Yin\thanks{State Key Laboratory of Mathematical Sciences and Institute of Computational Mathematics and Scientific/Engineering Computing, Academy of Mathematics and Systems Science, Chinese Academy of Sciences, Beijing 100190, China. Email:{\tt yintao@lsec.cc.ac.cn}},\;
Lu Zhang\thanks{Division of Mathematical Sciences, School of Physical and Mathematical Sciences, Nanyang Technological University, 637371, Singapore. Email:{\tt luzhang@ntu.edu.sg}}
}
\date{}
\end{titlepage}
\maketitle
%
\begin{abstract}
This paper proposes a novel frequency-time hybrid multiple scattering (FTH-MS) integral equation solver for time-dependent wave equation problems in general multi-layered media, with remarkable scalability with respect to the number of layers $N$. In light of the finite speed of wave propagation, the new methodology provides an innovative multiple-scattering idea of re-modeling the original $N$-layered problem into a sequence of $N-1$ two-layered sub-problems, for which the main advantages lie in that (i) each sub-problem enjoys much simpler wave scattering properties compared with the complicated problem in a multi-layered medium, (ii) it enables to develop high-accuracy solver utilizing Fourier transform and frequency-domain boundary integral equation (BIE) method; and (iii) numerical evaluation of the sub-problems in each multiple scattering step can be parallelized. Both multiplicative- and additive-type strategies are developed and equivalence results, which indicate that the $M$-th order multiple scattering sums can provide equivalent representations of the solutions up to a certain time $T(M)$, are rigorously derived. Owing to the existed result of exponential convergence of the perfectly-matched-layer (PML) truncation for two-layered problem, all the sub-problems is numerically resolved by means of the FTH method based on the Fourier transform and the PML-BIE method whose numerical evaluation is addressed utilizing the Chebyshev-based rectangular-polar solver with high accuracy. Numerical examples are presented to validate the efficiency and accuracy of the proposed method.
\end{abstract}
{\bf Keywords:} Wave equation, multiple scattering, layered media, integral equation

\section{Introduction}




The accurate and efficient simulation of wave scattering in multi-layered media, in both the frequency and time domains, is a fundamental problem with wide-ranging applications in geophysics, material science, medical ultrasound, photonic devices, and many other fields. Understanding how acoustic or electromagnetic waves propagate, reflect, transmit, and interfere across multiple interfaces is essential for both accurate physical modeling and the design of advanced materials and devices. However, developing fast and high-accuracy numerical schemes for such kind of multi-layered scattering problems still remains significantly challenging since repeated reflections and transmissions between layers will create complex multiple-scattering effects, while phase accumulation can produce strong interference and resonance. 

For frequency-domain problems, classical volume-discretization techniques, such as the finite-difference method (FDM)~\cite{Chew3dpml1994,DURU2014757,OSKOOI2010687,DURU2025114268} and the finite-element method (FEM)~\cite{Jiang2022}, require the originally unbounded computational domain to be truncated by appropriate artificial boundary treatments, typically a transparent boundary condition (TBC) or a perfectly matched layer (PML). Despite their widespread success in scattering problems involving bounded obstacles~\cite{Lassas2001}, extending these techniques to general multi-layered problems remains nontrivial. In particular, because the interfaces between adjacent layers are themselves unbounded, the construction of an exact TBC based on the classical Dirichlet-to-Neumann map cannot be carried over directly from the bounded-obstacle setting and remains insufficiently understood. The theoretical justification of PML truncation presents a similar challenge: although exponential convergence has been established for two-layered media~\cite{chen2010,Chen2017PML}, a rigorous convergence theory for general $N$-layered configurations is still open. 

In contrast to volumetric discretization methods, boundary integral equation (BIE) method~\cite{Nedelec2001,ColtonKress2019Inversea} represents the wave fields in terms of layer potentials and reformulates the original scattering problem as a system of integral equations posed on material interfaces. They therefore offer several appealing advantages, including a reduction in spatial dimension and the automatic enforcement of the radiation condition at infinity. However, for layered-media problems, the method of using layered Green's function--which automatically enforces the transmission conditions across the unperturbed and unbounded planar interfaces--generally requires computation of challenging Fourier integrals containing highly-oscillatory integrands over infinite integration intervals. The alternative approach utilizing free-space fundamental solution--whose evaluation is considerably simpler and less expensive--however, results into integral equations imposed on the complete unbounded surface, making an additional truncation strategy necessary for numerical computation. Representative techniques include the windowed Green function (WGF) method~\cite{bruno2017windowed,Arancibia2022WindowedGF} and the PML-based BIE method~\cite{Lu2018Perfectly,Lu2023HighlyPML,BaoEtAl2024Highly}. Although the WGF method enjoys superalgebraic convergence, a relatively large window—and hence a large truncated interface—may be required to attain high accuracy, particularly at high frequencies or for complex multi-layered configurations. On the other hand, as discussed above, while PML truncation has proved highly effective for two-layered problems, its convergence and stability for general multi-layered media remain insufficiently understood.

Turning to the time-domain problems, the time-domain boundary integral equation (TDBIE) method based on retarded-potential representations of the wave fields has attracted much attention recently~\cite{Ha-Duong2003,Aimi2011,Barnett2020,Steinbach2022,Hoonhout2026}. A direct discretization of the TDBIE requires the accurate treatment of time-dependent integration regions determined by the intersection of the light cone with the entire scattering surface and therefore, the resulting numerical schemes are often complicated and present challenges concerning numerical stability~\cite{Barnett2020}. A particularly successful alternative is the convolution quadrature (CQ) method~\cite{Lubich1988} whose extension to the study of layered-medium problems in combination with the WGF method have been discussed in~\cite{labarca2019convolution}. The CQ method reduces the original time-domain problem to a sequence of frequency-domain problems with complex frequencies, followed by the numerical evaluation of an inverse $\mathcal{Z}$-transform~\cite{BanjaiRK2011}. This approach avoids the explicit discretization of retarded potentials and enables the reuse of well-established frequency-domain solvers. Nevertheless, the achievable accuracy is influenced by the numerical contour integration employed to approximate the inverse $\mathcal{Z}$-transform, as well as by the order and stability properties of the underlying time discretization.

Alternatively, Fourier-transform technique can also be employed to recast time-domain wave scattering problems as families of frequency-domain problems with real frequencies. For scattering by bounded obstacles, this approach is particularly effective under suitable geometric and non-trapping assumptions. Recently, a highly accurate frequency–time hybrid (FTH) method was introduced in~\cite{anderson2020high} for efficient long-time simulations by using a time-windowing and recentering strategy and high-order Fourier-transform algorithm. Each frequency-domain sub-problem can be effectively treated through BIE approach in the frequency domain over a controlled range of real frequencies and subsequently transformed back to the time domain. To extend the applicability of Fourier-based technique to a broader class of wave-propagation problems, a family of multiple-scattering frequency–time hybrid (FTH-MS) methods was subsequently developed in~\cite{bruno2024multiple,pan2025multi}. The central idea is to use a partition of unity to decompose a closed boundary, or an open surface exhibiting strong wave-trapping effects, into a collection of overlapping open patches. The original problem is then reformulated as a sequence of multiple-scattering interactions among these open patches. Since the individual patches do not enclose resonant cavities, the corresponding local scattering problems remain uniquely solvable at all real frequencies and the resulting formulation thus provides an elegant mechanism for avoiding the non-uniqueness and severe ill-conditioning that may arise at, or near, interior eigenfrequencies. Moreover, when the constituent patches are sufficiently simple and the poles of their associated resolvent operators remain well separated from the real axis~\cite{LafontaineEtAl2021Most}, the decomposition can substantially improve the conditioning of the resulting linear systems and reduce the number of GMRES iterations required for convergence. A closely related decomposition has also been applied to configurations involving multiple bounded obstacles in~\cite{pan2025multi}. In that setting, the global multiple-obstacle problem is recast as a coupled system of component-scattering problems, each involving only a single obstacle, while the interactions among different obstacles are accounted for through successive wave exchanges. Motivated by these developments, a natural question, illustrated in the box below, is whether the FTH-MS framework can be extended to wave scattering in general multi-layered media by decomposing the global $N$-layer problem into a sequence of coupled two-layered scattering sub-problems. Such a decomposition is particularly appealing because each constituent problem involves only a single interface separating two adjacent media, for which the underlying analytical structure and available numerical techniques are substantially better understood.
\[
\boxed{
\renewcommand{\arraystretch}{1.5}
\begin{array}{r@{\;}c@{\;}l}
N\text{-obstacle problem}
&
\overset{\text{\cite{pan2025multi}}}{\Longleftrightarrow}
&
\displaystyle\bigcup_{j=1}^N
\text{single-obstacle problems}
\\
&
\big\downarrow
&
\\
N\text{-layered problem}
&
\Longleftrightarrow
&
\displaystyle\bigcup_{j=1}^{N-1}
\text{two-layered problems}
\end{array}
}
\]

To realize this idea, we exploit the domain-of-influence property associated with the finite propagation speed of time-domain waves and propose a novel multiple-scattering algorithm for general $N$-layered media ($N\ge 3$). Rather than solving the wave equation directly in the full multi-layered configuration, the proposed method represents its solution by a multiple-scattering series generated through $N-1$ two-layered sub-problems, each associated with a single interface between two adjacent media. The global wave field is then reconstructed by successively accounting for the waves reflected and transmitted among the interfaces. For clarity, we first introduce both multiplicative- and additive-type multiple-scattering algorithms for the simplest nontrivial case $N=3$. Then extension of the additive formulation to general configuration with $N\ge 3$, as its structure is particularly well suited to parallel computation, is investigated. Both formulations recast the original multi-layered problem as a sequence of scattering interactions among the unbounded interfaces, yielding an expansion whose individual terms are obtained by solving two-layered sub-problems. By causality and finite-speed wave propagation, a truncated expansion can be proved to coincide with the solution of the original multi-layered wave equation over a finite time interval whose length increases with the truncation order and is determined by the wave speeds and the propagation distances between adjacent interfaces. Consequently, any prescribed finite simulation interval can, in principle, be covered by retaining a sufficiently large number of multiple-scattering terms. 

Building on this multiple-scattering decomposition, we next incorporate the FTH strategy~\cite{anderson2020high} by applying the Fourier transform to each two-layered sub-problem. This procedure converts the corresponding time-domain wave equation into a family of independent two-layered scattering problems with real frequencies and results into the expected FTH-MS solver, see Algorithm~\ref{alg:msN}. In this work, these frequency-domain problems are solved using the PML-BIE method which enjoys the effective truncation of single unbounded interface provided by the PML. Finally, the time-domain solution of each sub-problem is reconstructed through a high-order inverse Fourier transform, and the resulting component fields are assembled according to the proposed multiple-scattering algorithm. As demonstrated by the numerical experiments, this combination yields a modular computational framework that is well suited to high-order discretization and parallelization. Further discussions of the low-frequency regime and long-time simulations are also provided. In particular, we highlight the deterioration in the efficiency and accuracy of PML truncation at low frequencies, which poses a major obstacle, as shown in Example 3, to robust and high-accuracy long-time computation. A systematic treatment of this issue is left for future work.

The remainder of this paper is organized as follows. Section~\ref{sec:2} describes the mathematical formulations of the wave equation problem in a multi-layered medium and provides essential background on domain-of-influence property as well as the FTH-MS method corresponding to the problem with bounded obstacles. Both the ``multiplicative-type'' and ``additive-type''  multiple scattering methods to re-model the problem as combinations of a series of two-layered medium problems are developed in Section~\ref{sec:3} where the particular case $N=3$ and the extension to the general case $N\ge3$ are discussed in Sections~\ref{sec:3.1} and \ref{sec:3.1}, respectively. Based on the Fourier transform and PML-BIE method for solving the frequency-domain two-layered medium problems, Section~\ref{sec:4} proposes the FTH-MS integral equation solver whose accuracy and efficiency are validated in Section~\ref{sec:5} through several numerical experiments.

\section{Preliminaries}
\label{sec:2}

\subsection{Wave equation problem in a multi-layered medium}
\label{sec:2.1}

This work considers the problem of time-domain wave equation in a general $N$-layered medium. We only present the formulas in two dimensions and extension of the proposed methodology to the more challenging three-dimensional case in a straightforward manner. As shown in Figure~\ref{fig:layered-media}, we denote by $\Omega_j\subset\mathbb{R}^2$, $j = 1, \cdots, N$, the domain corresponding to each layered-medium separated by $N-1$ interfaces denoted by $\Gamma_l$, $l = 1, \dots, N-1$ each of which is a local perturbation of the flat surface. The finite propagation wave speed in each medium $\Omega_j$ is given by $c_j>0$, $j=1,\cdots,N$. The distance between $\Gamma_j$ and $\Gamma_{j+1}$ is denoted by $d_{j,j+1} = \mathrm{dist}(\Gamma_j, \Gamma_{j+1})$ for $j=1,\cdots,N-2$. Let $d_{min} = \min_{j=1,\cdots,N-2}\{d_{j,j+1}\}>0$. An application of the proposed algorithm to more complicated three-dimensional problem will be presented in Section~\ref{sec:5}.


Consider a smooth incident wave $u^{\text{inc}}$ which is assumed to arrives at the interfaces at a particular time $t_0>0$ such that both the resulted reflected fields and the transmitted fields are equal to zero at $t\le t_0$. The incident wave can be a point source $u^{\text{inc}}(\bm x,t)=u_{\text{point}}^{\text{inc}}(\bm x,t;\bm z_0)$ located at $\bm z_0\in\Omega_{j_0}$ or a plane wave $u^{\text{inc}}(\bm x,t)=u_{\text{plane}}^{\text{inc}}(\bm x,t;d^{\text{inc}})$ impinging on the first interface $\Gamma_1$ with incident direction $d^{\text{inc}}=(\cos\theta^{\text{inc}},\sin\theta^{\text{inc}})^\top, \theta^{\text{inc}}\in(-\pi/2,\pi/2)$. Then this work is devoted to studying the problem of transmitted and reflected fields induced by $u^{inc}$ satisfying the wave equations:
\begin{equation}
    \frac{\partial^2 u_j(\bm x,t)}{\partial t^2} - c_j^2 \Delta u_j(\bm x,t) = 0, \quad (\bm x,t) \in \Omega_j \times \mathbb{R}^+,\quad j=1,\cdots,N,
    \label{eq-transmisson-N}
\end{equation}
with zero initial conditions $u_j|_{t=0}=\frac{\partial u_j}{\partial t}|_{t=0}=0$ in $\Omega_j$, $j=1,\cdots,N$. Here, $\mathbb{R}^+:=\{t\in \mathbb{R}: t>0\}$.
On each interface $\Gamma_j$, $j=1,\cdots,N$ between the layers $\Omega_j$ and $\Omega_{j+1}$, the fields are coupled through the transmission conditions
\begin{equation}
    \begin{cases}
        u_{j+1}(\bm x,t) - u_j(\bm x,t) = f_j(\bm x,t), \\
        \rho_{j+1}^{-1} \frac{\partial u_{j+1}(\bm x,t)}{\partial \bm n} - \rho_j^{-1} \frac{\partial u_j(\bm x,t)}{\partial \bm n} = g_j(\bm x,t),
    \end{cases}
    \quad \text{on } \Gamma_j \times \mathbb{R}^+,
    \label{eq-transmisson-condition-N}
\end{equation}
where the constant $\rho_j>0$ denotes the density of the medium in $\Omega_j$ and $\bm n$ is the unit normal directing from $\Omega_{j+1}$ to $\Omega_j$ on $\Gamma_j$, see Figure~\ref{fig:layered-media}. The data $f_j$ and $g_j$ in the transmission conditions is determined by the incident field $u^{\text{inc}}$. In particular, for the case of plane wave incidence $u_{\text{plane}}^{\text{inc}}$:
\begin{equation}
    f_j = \delta_{j,1} u_{\text{plane}}^{\text{inc}}, \quad g_j = \delta_{j,1} \rho_1^{-1} \frac{\partial u_{\text{plane}}^{\text{inc}}}{\partial n},
    \label{eq:transmission-condition-plane-wave}
\end{equation}
where $\delta_{i,j}$ is the Kronecker delta function. For the incidence of a point source $u_{\text{point}}^{\text{inc}}$:
\begin{equation}
    f_j = (\delta_{j,{j_0}} - \delta_{j+1,{j_0}}) u_{\text{point}}^{\text{inc}}, \quad g_j = (\rho_j^{-1}\delta_{j,{j_0}} - \rho_{j+1}^{-1}\delta_{j+1,{j_0}})  \frac{\partial u_{\text{point}}^{\text{inc}}}{\partial n}.
    \label{eq:transmission-condition-point-source}
\end{equation}

\begin{figure}
    \centering
    \includegraphics[width=0.4\linewidth]{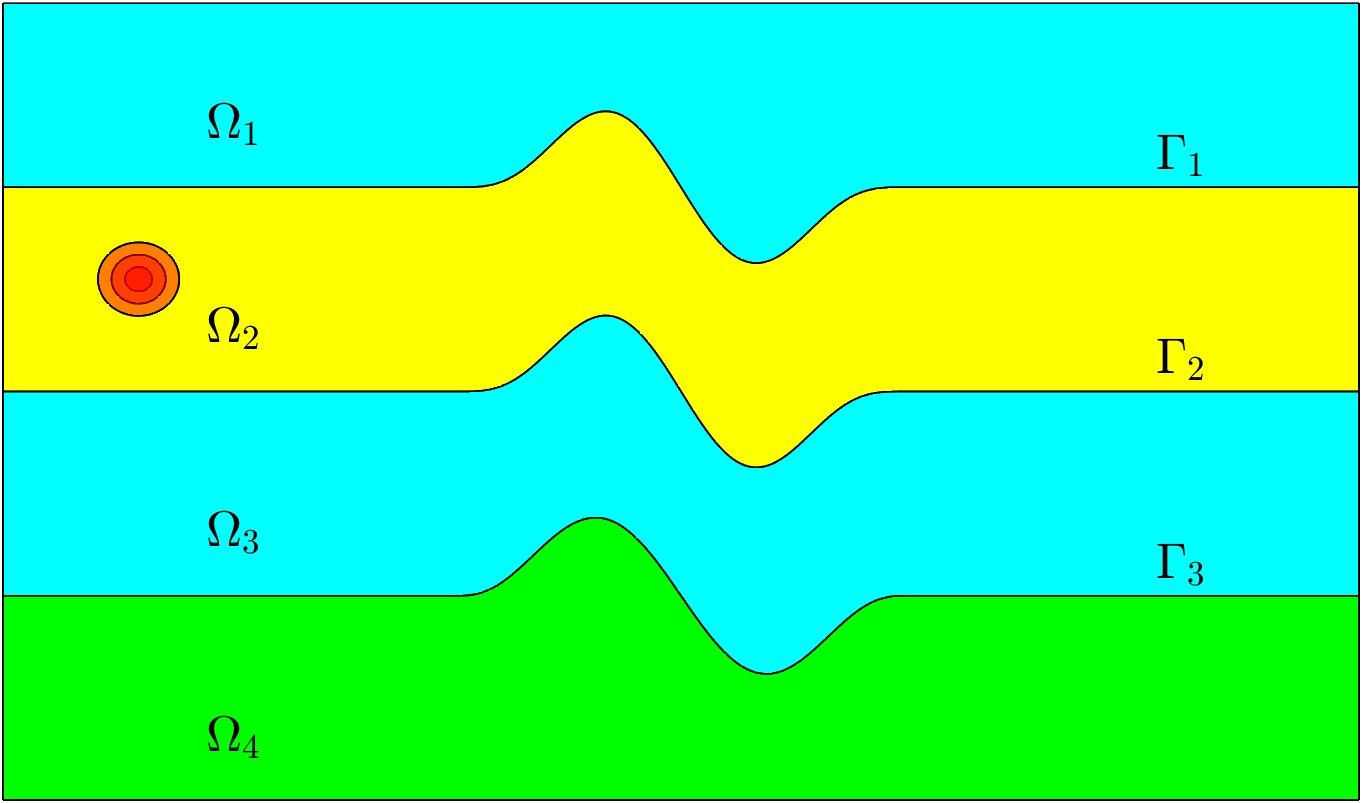}
    \includegraphics[width=0.4\linewidth]{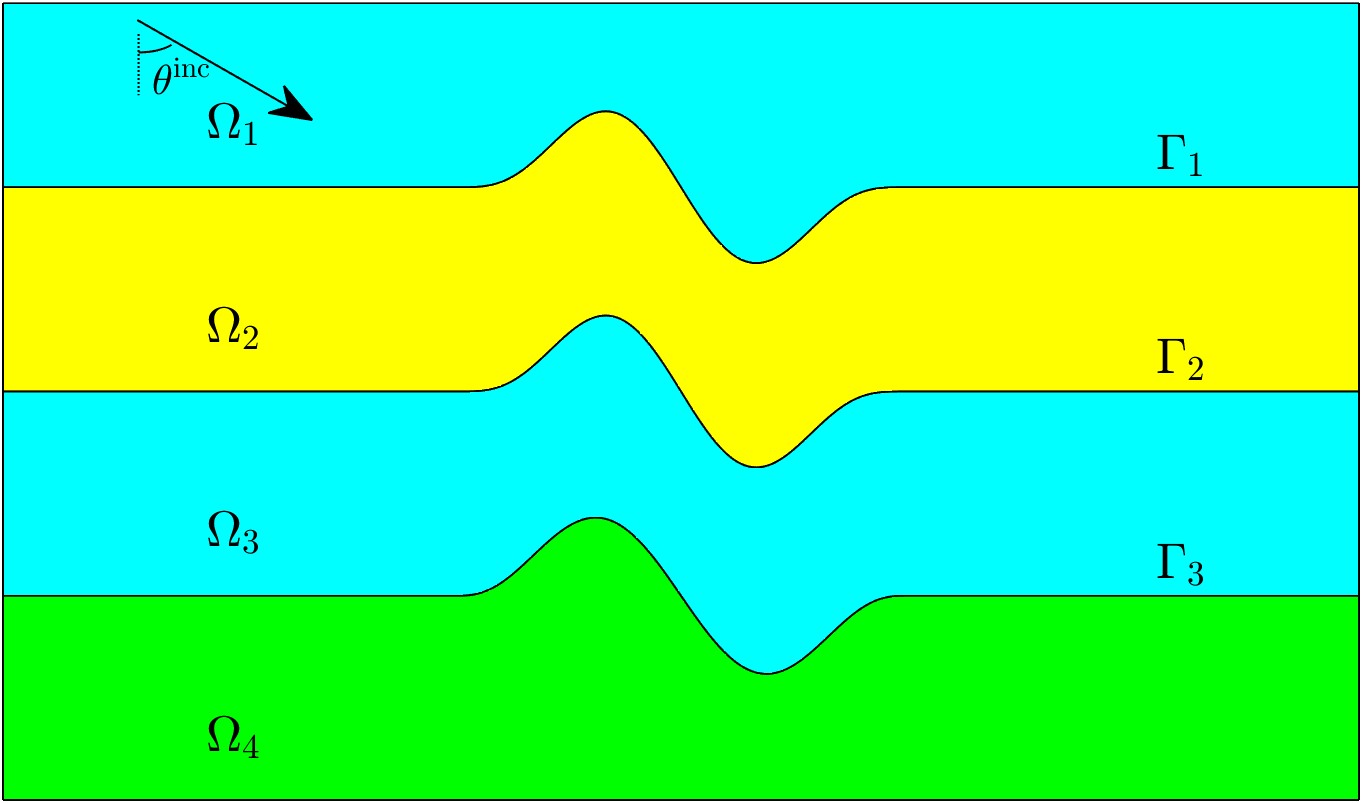}
    \caption{Layered media($N=4$) with locally perturbed interfaces. Left: point source incidence, Right: plane wave incidence.}
    \label{fig:layered-media}
\end{figure}

For the convenience of the discussion of our multiple scattering algorithm, we are also interested in the two-layered medium problem due to the scattering by each interface $\Gamma_j$, by denoting $\widetilde\Omega_j$ and $\widetilde\Omega_{j+1}$ the infinite domain above and below the interface $\Gamma_j$, respectively, as follows:
\begin{equation}
\begin{cases}
\frac{\partial^2 \widetilde u_j^+(\bm x,t)}{\partial t^2}-c_j^2\Delta \widetilde u_j^+(\bm x,t)=0,                                                       & (\bm x,t)\in \widetilde\Omega_{j}\times\mathbb{R}^+,\\
\frac{\partial^2 \widetilde u_{j+1}^-(\bm x,t)}{\partial t^2}-c_{j+1}^2\Delta \widetilde u_{j+1}^-(\bm x,t)=0,                                                       & (\bm x,t)\in \widetilde\Omega_{j+1}\times\mathbb{R}^+,\\
\widetilde u_{j+1}^-(\bm x,t)-\widetilde u_{j}^+(\bm x,t)=\widetilde f_j(\bm x,t), & (\bm x,t)\in \Gamma_j\times\mathbb{R}^+,   \\
\rho_{j+1}^{-1}\frac{\partial \widetilde u_{j+1}^-(\bm x,t)}{\partial \bm n}-\rho_{j}^{-1}\frac{\partial \widetilde u_{j}^+(\bm x,t)}{\partial \bm n}=\widetilde g_j(\bm x,t) & (\bm x,t)\in \Gamma_j\times\mathbb{R}^+,\\
\widetilde u_j^+(\bm x,t)|_{t=0}=\frac{\partial \widetilde u_j^+(\bm x,t)}{\partial t}|_{t=0}=0, & \bm x\in\widetilde\Omega_j,\\
\widetilde u_{j+1}^-(\bm x,t)|_{t=0}=\frac{\partial \widetilde u_{j+1}^-(\bm x,t)}{\partial t}|_{t=0}=0, & \bm x\in\widetilde\Omega_{j+1},
\end{cases}
\label{eq:two-layer0}
\end{equation}
with boundary data $\widetilde f_j, \widetilde g_j$ to be specified.

By the classical energy method (see for example~\cite{EvansPDE,DURU2014757}), the uniqueness results of the transmission problems~\eqref{eq-transmisson-N}--\eqref{eq-transmisson-condition-N} and (\ref{eq:two-layer0}) are concluded in the following theorem.
\begin{theorem}\label{th:transmission-well-posedness}
Let the interfaces $\Gamma_j, j=1,\cdots,N$ be Lipschitz. Then each of the transmission problems \eqref{eq-transmisson-N}--\eqref{eq-transmisson-condition-N} and \eqref{eq:two-layer0} has at most one solution.
\end{theorem}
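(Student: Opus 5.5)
By linearity, it suffices to show that the homogeneous problem has only the trivial solution. Let $u=(u_1,\dots,u_N)$ and $\tilde u=(u_1',\dots,u_N')$ be two solutions of \eqref{eq-transmisson-N}--\eqref{eq-transmisson-condition-N} with the same data $f_j,g_j$. Their difference $w_j=u_j-u_j'$ satisfies the wave equations in each $\Omega_j$, has zero initial data, and obeys the homogeneous transmission conditions $w_{j+1}=w_j$ and $\rho_{j+1}^{-1}\partial_{\bm n}w_{j+1}=\rho_j^{-1}\partial_{\bm n}w_j$ on $\Gamma_j$. The two-layered problem \eqref{eq:two-layer0} is the special case $N=2$, with $\widetilde\Omega_j,\widetilde\Omega_{j+1}$ in place of $\Omega_j,\Omega_{j+1}$, so one argument covers both. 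We work in the natural finite-energy class, meaning $w_j\in C([0,T];H^1_{loc}(\Omega_j))\cap C^1([0,T];L^2_{loc}(\Omega_j))$ with enough regularity for traces on the Lipschitz interfaces. This makes the Green identities (Gauss--Green on Lipschitz domains) legitimate.

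The key device is a weighted energy that makes the interface terms cancel. For each $j$ we multiply the equation in $\Omega_j$ by $\rho_j^{-1}c_j^{-2}\partial_t w_j$ and define
\begin{equation*}
E(t)=\frac12\sum_{j=1}^N\rho_j^{-1}\int_{\Omega_j}\Big(c_j^{-2}|\partial_t w_j|^2+|\nabla w_j|^2\Big)\,d\bm x .
\end{equation*}
Formally, differentiating and integrating by parts gives
\begin{equation*}
E'(t)=\sum_j\rho_j^{-1}\int_{\partial\Omega_j}\partial_t w_j\,\partial_\nu w_j\,ds .
\end{equation*}
On each $\Gamma_j$ the two contributions come with opposite normals. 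Using $\partial_t w_{j+1}=\partial_t w_j$ and $\rho_{j+1}^{-1}\partial_{\bm n}w_{j+1}=\rho_j^{-1}\partial_{\bm n}w_j$, they cancel exactly, so $E'\equiv0$. Since $E(0)=0$, we get $w\equiv$ const, and the zero initial data then force $w\equiv0$.

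The main obstacle is that the domains and interfaces are unbounded, and solutions in our class are not assumed to have finite global energy. The formal computation must therefore be localized. We do this with the domain-of-influence (cone) argument as in \cite{EvansPDE}. Fix $\bm x_0$ and $T>0$, let $c_{max}=\max_j c_j$, and set $B(t)=B(\bm x_0,R-c_{max}t)$. Define the local energy $e(t)$ by the same weighted integrals restricted to $\Omega_j\cap B(t)$. Differentiating now produces three kinds of terms:
\begin{itemize}
\item interface terms on $\Gamma_j\cap B(t)$, which cancel exactly as before;
\item flux terms on $\partial B(t)\cap\Omega_j$, each of the form $\rho_j^{-1}\int\big(\partial_t w_j\,\partial_\nu w_j-\tfrac{c_{max}}{2}(c_j^{-2}|\partial_t w_j|^2+|\nabla w_j|^2)\big)\,ds$.
\end{itemize}
Since $c_j\le c_{max}$, Cauchy--Schwarz with $|ab|\le \tfrac{c_{max}}{2}(c_j^{-2}a^2+b^2)$ shows each flux term is $\le0$. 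Hence $e'(t)\le0$, and $e(0)=0$ gives $e\equiv0$ on $[0,R/c_{max}]$. Because $\bm x_0$ and $R$ are arbitrary, $w\equiv0$ for all $t\ge0$.

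Two technical points remain:
\begin{itemize}
\item Differentiating a moving-domain integral intersected with a Lipschitz interface requires care. We handle it by mollifying in time, or by integrating the identity over the space-time truncated cone and applying the divergence theorem there; this is the cleanest route.
\item For the two-layered problem \eqref{eq:two-layer0}, the same argument applies verbatim with a single interface.
\end{itemize}
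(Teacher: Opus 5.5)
Your proposal is correct and follows the same route as the paper, which gives no details beyond invoking the classical energy method of \cite{EvansPDE}. Your $\rho_j^{-1}$-weighted energy makes the interface fluxes cancel under the homogeneous transmission conditions, and your shrinking-ball localization at speed $c_{max}=\max_j c_j$ removes any need for finite global energy on the unbounded layers; together these are exactly the argument that citation refers to.
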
 

The aim of this work is to proposing a novel Fourier transform based frequency/time hybrid integral
equation solver for the time-domain multi-layered medium problem~\eqref{eq-transmisson-N}--\eqref{eq-transmisson-condition-N}, however, due to the possible existence of trapping modes in $\Omega_j, j=2,\cdots,N-1$~\cite{Nedelec_Waveguide}, constructing frequency-robust integral equation solvers for the frequency-domain multi-layered medium problems within a broadband frequency region, resulting from a direct application of the Fourier transform to the problem~\eqref{eq-transmisson-N}--\eqref{eq-transmisson-condition-N}, remains significantly challenging. In light of the idea of FTH-MS algorithm developed in \cite{pan2025multi} for solving the wave equation problem with multiple bounded obstacles, as shown in Section~\ref{sec:3}, the original time-domain multi-layered medium problem~\eqref{eq-transmisson-N}--\eqref{eq-transmisson-condition-N} can be reformulated as a multiple-scattering series of multiple two-layered medium problems. To end of this section, we next describe the necessary domain-of-influence property related to the layered-medium problems as well as the idea of FTH and FTH-MS algorithm.

\subsection{Domain-of-influence property}
\label{sec:2.2}

The Huygens-like domain-of-influence property satisfied by the wave equation problems plays a crucial role to show the efficiency of the multiple scattering algorithm. 
For the exterior problem of the wave equation with a bounded obstacle, the classical domain-of-influence~\cite{Sayas2016RetardedPA} states the relation between the compact support of the scattered field and the propagation speed.

\begin{theorem}[Domain-of-influence for the bounded-obstacle problem]
\label{DoI-bound}
Let $D\subset\mathbb{R}^2$ be a bounded obstacle with Lipschitz boundary $\Gamma_D$ and denote by $D^e=\mathbb{\R}^2\backslash\overline{D}$ the exterior region. Denote by $c>0$ the wave speed. Let $u_D$ be the unique solution to the wave equation problem with Dirichlet boundary condition:
\begin{equation}
\begin{cases}
\frac{\partial^2 u_D(\bm x,t)}{\partial t^2} - c^2 \Delta u_D(\bm x,t) = 0, & (\bm x,t) \in D^e \times \mathbb{R}^+,\cr
u_D(\bm x,t)=-u^{inc}(\bm x,t), &(\bm x,t) \in \Gamma_D \times \mathbb{R}^+,\cr
u_D(\bm x,t)|_{t=0}= \frac{\partial u_D(\bm x,t)}{\partial t}|_{t=0}=0, & \bm x\in D^e
\end{cases}
\label{eq-bound}
\end{equation}
where $u^{\text{inc}}$ is a causal incident field such that $u^{\text{inc}}(\bm x,t)=0$ for all $\bm x\in\Gamma_D$ and $t\le t_0$ with some $t_0\ge 0$. Then for any $\bm x\in D^e$, $u_D(\bm x,t)=0$ for all $0\le t\le t_0+c^{-1}\text{dist}(\bm x,\Gamma_D)$.
\end{theorem}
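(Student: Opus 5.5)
The proof is a local energy estimate over a backward characteristic cone, set up so that the cone never reaches the obstacle; this is the classical argument behind the domain-of-influence property (cf.~\cite{EvansPDE,Sayas2016RetardedPA}).

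Fix $\bm x_0\in D^e$. Put $r_0=\text{dist}(\bm x_0,\Gamma_D)$ and $T^*=t_0+c^{-1}r_0$. It suffices to show $u_D(\bm x_0,t^*)=0$ for every $t^*\le T^*$ with $t^*>t_0$. For $t\le t_0$ the boundary data $-u^{inc}$ vanish on $\Gamma_D$ and the initial data are zero, so $u_D\equiv 0$ on $D^e\times[0,t_0]$ by the uniqueness part of the energy method (Theorem~\ref{th:transmission-well-posedness}-type argument for the Dirichlet problem). We may therefore treat $t_0$ as a new initial time with zero Cauchy data.

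Consider the truncated backward cone
\[
K=\{(\bm x,t): t_0\le t\le t^*,\ |\bm x-\bm x_0|\le c(t^*-t)\}.
\]
Since $c(t^*-t_0)\le r_0$, every ball $B_t=B(\bm x_0,c(t^*-t))$ lies in $\overline{D^e}$ and touches $\Gamma_D$ at most in a point when $t=t_0$. Hence for $t>t_0$ we have $B_t\subset D^e$, so $K$ does not meet the obstacle boundary except possibly at its base. Define the local energy
\[
e(t)=\frac12\int_{B_t}\big(|\partial_t u_D|^2+c^2|\nabla u_D|^2\big)\,d\bm x .
\]
Differentiating, using the equation and integrating by parts on $B_t$, the boundary contribution combines with the shrinking-ball term to give
\[
e'(t)=\int_{\partial B_t}\Big(c^2\partial_t u_D\,\partial_\nu u_D-\tfrac{c}{2}\big(|\partial_t u_D|^2+c^2|\nabla u_D|^2\big)\Big)\,ds\le 0,
\]
by Cauchy--Schwarz. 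Since $e(t_0)=0$, we get $e\equiv 0$ on $[t_0,t^*]$. Thus $u_D$ is constant on $K$, and it vanishes on the base, so $u_D(\bm x_0,t^*)=0$.

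The main obstacle is regularity. The energy identity needs $u_D$ smooth enough ($H^1$ in space-time, with traces on spheres) to justify differentiating $e(t)$ and integrating by parts. I would handle this by first running the argument for smooth $u^{inc}$, where interior regularity of the solution holds because $K$ stays away from $\Gamma_D$ for $t>t_0$, and then passing to general data by density together with the stability of the Dirichlet problem. The boundary case $t^*=T^*$ follows by continuity in $t$.
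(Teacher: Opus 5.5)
The paper does not prove this theorem; it quotes it as the classical domain-of-influence property and points to the retarded-potential literature \cite{Sayas2016RetardedPA}. Your backward-cone energy argument is correct and is the standard self-contained proof. Uniqueness on $[0,t_0]$ gives $u_D\equiv 0$ there. For $t_0<t\le t^*<T^*$ the open ball $B_t$ is connected, contains $\bm x_0\in D^e$ and misses $\Gamma_D\supseteq\partial D^e$, so it lies in $D^e$. The nonincreasing local energy with $e(t_0)=0$ then forces $u_D(\bm x_0,t^*)=0$.

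The route implicit in the citation is different. There one writes $u_D$ as a retarded layer potential on $\Gamma_D$ whose density vanishes for $t\le t_0$; this is causality of the inverse time-domain boundary integral operator, obtained from Laplace-domain/Paley--Wiener arguments. One then reads the conclusion off the support of the two-dimensional retarded Green function, which vanishes unless $c(t-\tau)>|\bm x-\bm y|$. That ties the statement to the representation formulas behind time-domain integral-equation solvers, but it needs considerably heavier machinery. Your argument uses only finite-energy uniqueness and the geometry of the cone. It also carries over verbatim to Corollary~\ref{DoI-twolayer}, which the paper asserts without proof: take a cone of speed $c_j$ inside $\widetilde\Omega_j$ and use uniqueness of the transmission problem on $[0,t_0]$.

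Two minor remarks. First, at $t^*=T^*$ the base ball may touch $\Gamma_D$ in more than one point, whenever $\Gamma_D$ contains an arc of the circle $|\bm x-\bm x_0|=r_0$. This is harmless, since you only use $t^*<T^*$ and continuity. Second, the interior regularity you need does not come from the wave equation itself, which does not smooth. It comes from time regularity (smooth-in-time data, or mollification in time, which preserves the equation) combined with interior elliptic regularity for $\Delta\partial_t^k u_D=c^{-2}\partial_t^{k+2}u_D$.
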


As stated in the following corollary, this fundamental domain-of-influence property can be straightly generalized to the wave equation problem in a two-layered medium (\ref{eq:two-layer0}).

\begin{corollary}[Domain-of-influence for the two-layered medium problem]
\label{DoI-twolayer}
For each $j=1,\cdots,N-1$, let $\widetilde u_j^+, \widetilde u_{j+1}^-$ be the unique solutions to the problem \eqref{eq:two-layer0} where the boundary data admits $\widetilde{f_j}=\widetilde{g_j}=0$ on $\Gamma_j$ for all $t\le t_0$ with some $t_0\ge 0$. Then for any $\bm x\in\Omega_j$, $\widetilde u_j^+(\bm x,t)=0$ for all $0\le t\le t_0+c_j^{-1}\text{dist}(\bm x,\Gamma_j)$ and for any $\bm x\in\Omega_{j+1}$, $\widetilde u_{j+1}^-(\bm x,t)=0$ for all $0\le t\le t_0+c_{j+1}^{-1}\text{dist}(\bm x,\Gamma_j)$.
\end{corollary}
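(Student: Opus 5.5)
We will prove Corollary~\ref{DoI-twolayer} by a local energy estimate over a truncated backward characteristic cone; this is the same mechanism behind Theorem~\ref{DoI-bound}, now run in each half-space separately. The interface data are the only source in \eqref{eq:two-layer0}, so the fields should vanish until the data have had time to propagate from $\Gamma_j$ to $\bm x$.

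Fix $\bm x_0\in\widetilde\Omega_j$ and set $\tau_0=t_0+c_j^{-1}\text{dist}(\bm x_0,\Gamma_j)$. For $0\le t\le\tau_0$ we take the shrinking disk $B(t)=\{\bm x:|\bm x-\bm x_0|<c_j(\tau_0-t)\}$ and the local energy
\[
E(t)=\frac12\int_{B(t)}\Big(|\partial_t\widetilde u_j^+|^2+c_j^2|\nabla\widetilde u_j^+|^2\Big)\,d\bm x .
\]
For $t\ge t_0$ we have $c_j(\tau_0-t)\le \text{dist}(\bm x_0,\Gamma_j)$, so $B(t)$ lies in $\widetilde\Omega_j$ and does not cross the interface; it is at most tangent to $\Gamma_j$. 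Differentiating $E$, using the wave equation and integrating by parts, the flux through $\partial B(t)$ is dominated by the boundary term coming from the shrinking radius, by Cauchy--Schwarz. Hence $E$ is nonincreasing on $[t_0,\tau_0]$, exactly as in the textbook proof of finite speed of propagation (e.g.~\cite{EvansPDE}).

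The argument therefore reduces to showing $E(t_0)=0$, i.e.\ that the solution vanishes identically on both sides for $t\le t_0$. This follows from Theorem~\ref{th:transmission-well-posedness}. On $[0,t_0]$ the data $\widetilde f_j,\widetilde g_j$ vanish and the initial data are zero, so the zero pair solves \eqref{eq:two-layer0} on that interval, and uniqueness gives $\widetilde u_j^+=\widetilde u_{j+1}^-=0$ for $t\le t_0$. Restricting to $[0,t_0]$ is legitimate because the energy identity behind the uniqueness theorem holds on any finite time interval. We conclude $E(\tau_0^-)=0$, so $\widetilde u_j^+$ is constant near $(\bm x_0,t)$, and it is zero because it vanishes at $t=t_0$. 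Running the argument with $\tau_0$ replaced by each $t\le\tau_0$ gives $\widetilde u_j^+(\bm x_0,t)=0$ for all $0\le t\le\tau_0$. The case $\bm x_0\in\widetilde\Omega_{j+1}$ is identical with $c_{j+1}$ in place of $c_j$.

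The main obstacle is that the interfaces in this corollary are unbounded and Lipschitz, whereas Theorem~\ref{DoI-bound} concerns a bounded obstacle, so we cannot simply cite it. The cone argument avoids this: it is purely local and never touches $\Gamma_j$ on $[t_0,\tau_0]$. The one remaining point is the regularity needed to justify the integration by parts, namely finite local energy of the (weak) solution. We would handle this by working with the weak formulation and assuming the standard regularity implicit in Theorem~\ref{th:transmission-well-posedness}.
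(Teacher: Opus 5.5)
Your argument is correct, and it does more than the paper does. The paper gives no proof of Corollary~\ref{DoI-twolayer}; it only asserts that the bounded-obstacle result of Theorem~\ref{DoI-bound} ``can be straightly generalized''. Any direct transfer of the bounded-obstacle argument (e.g.\ via retarded layer potentials on $\Gamma_j$ with causal densities) has to deal with the unboundedness of $\Gamma_j$. That means justifying a representation formula over an infinite interface and the convergence of the associated integrals, which is not automatic for plane-wave data that are not compactly supported along $\Gamma_j$.

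You avoid this by splitting the proof into two steps:
\begin{itemize}
\item a global step, uniqueness on $[0,t_0]$, where the $\rho$-weighted interface fluxes cancel because $\widetilde f_j=\widetilde g_j=0$;
\item a purely local step, the backward-cone estimate with speed $c_j$, which never meets $\Gamma_j$ on $[t_0,\tau_0]$.
\end{itemize}
This gives a self-contained proof for Lipschitz unbounded interfaces under local energy regularity. It also explains why only $c_j$, rather than $\max\{c_j,c_{j+1}\}$, enters the bound: both fields vanish identically at $t_0$, so no faster signal travelling along the interface in $\widetilde\Omega_{j+1}$ can arrive earlier.

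One refinement is worth making. Your first step inherits whatever solution class Theorem~\ref{th:transmission-well-posedness} tacitly uses, and plane-wave incidence produces fields of infinite global energy. You can remove this dependence by localizing that step as well. Use cones of speed $\max\{c_j,c_{j+1}\}$ that cross $\Gamma_j$, with the energy density $\rho^{-1}(c^{-2}|\partial_t u|^2+|\nabla u|^2)$ in each medium; the interface contribution cancels for $t\le t_0$.

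Also, rerunning the argument for each $t\le\tau_0$ is unnecessary. The cone already contains $\{\bm x_0\}\times[t_0,\tau_0)$, and continuity covers $t=\tau_0$.
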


The Huygens-like domain-of-influence property for the general $N$-layered medium problem \eqref{eq-transmisson-N}--\eqref{eq-transmisson-condition-N} is given in the following theorem. The result is shown for the case of point source only, but analogous result also holds for the plane incidence case. 

\begin{theorem}[Domain-of-influence for the general $N$-layered medium problem]
\label{DoI-Nlayer}
Let $u_j, j=1,\cdots,N$ be the unique solution to the problem \eqref{eq-transmisson-N}--\eqref{eq-transmisson-condition-N} with a causal point source located at $\bm z_0\in \Omega_{i_0}$. For any $\bm x\in \Omega_j$, we have $u_j(\bm x,t)=0$ for all $t\in [0,\eta(\bm x)+t_0]$, where $\eta(\bm x)$ is given by 
\begin{equation}
\eta(\bm x)=\begin{cases}
\sum_{k=j+1}^{i_0-1}\frac{d_{k-1,k}}{c_k}+\frac{\text{dist}(\bm x,\Gamma_j)}{c_j},     & j<i_0, \\
\frac{\min\{\text{dist}(\bm x,\Gamma_{j-1}),\text{dist}(\bm x,\Gamma_j)\}}{c_j},         & j=i_0, \\
\sum_{k=i_0+1}^{j-1}\frac{d_{k-1,k}}{c_k}+\frac{\text{dist}(\bm x,\Gamma_{j-1})}{c_j}, & j>i_0.
\end{cases}
\end{equation}
\end{theorem}
\begin{proof}
It can be shown from the energy method that \cite[Section~7.2.4]{EvansPDE} that the wave propagates with finite speed in the layered media. For any $\bm x\in \Omega_j$, let $\tau_{\bm x,j}$ be the arriving time of the wave front due to the incidence of a causal point source located at $\bm z_0\in \Omega_{i_0}$. Then when $j=i_0$, the solution $u_j(\bm x)$ is generated from the scattering by both the upper and lower interfaces of the source layer itself which yields
\begin{equation*}
\tau_{\bm x,j}> \frac{\min(\text{dist}(\bm x,\Gamma_{j-1}),\text{dist}(\bm x,\Gamma_j))}{c_j}.
\end{equation*}
For the case $j<i_0$, the path of the wave traverses upwards through the layers $\Omega_{i_0-1},\Omega_{i_0-2},\cdots, \Omega_{j+1}$, before finally entering $\Omega_j$ to reach $\bm x$. Hence,
\begin{equation*}
\tau_{\bm x,j}> \sum_{k=j+1}^{i_0-1}\frac{d_{k-1,k}}{c_k}+\frac{\text{dist}(\bm x,\Gamma_j)}{c_j}.
\end{equation*}
The result for $j>i_0$ follows analogously and thus, completes the proof.
\end{proof}

\subsection{FTH-MS solver for problem with multiple bounded obstacles}
\label{sec:2.3}

Let $D$ in the problem \eqref{eq-bound} be a set of $N$ well-separated bounded obstacles, i.e., $D=\cup_{j=1}^N D_j$ such that each $D_j\subset\R^2$ is a closed bounded obstacle with Lipschitz boundary $\Gamma_{D_j}$ and $\delta_{min}=\min_{1\leq j\leq N}\{\mathrm{dist}(\Gamma_{D_j},\Gamma_D\setminus\Gamma_{D_j})\}$ with $\Gamma_D=\cup_{j=1}^N \Gamma_{D_j}$. Then the multiple scattering algorithm allows to represent the solution $u_D$ to the problem \eqref{eq-bound} as a finite sum of wave equation solutions $v_{j,m}$
\begin{equation}
u_D^M(\bm x,t)=\sum_{m=1}^{M}\sum_{j=1}^{N}v_{j,m}(\bm x,t),\quad \bm x\in D^e,\; t\ge 0.
\label{eq:bound-sum}
\end{equation}
in which each $v_{j,m}$ satisfies a single-obstacle wave equation problem:
\begin{equation}
\begin{cases}
\frac{\partial^2 v_{j,m}(\bm x,t)}{\partial t^2}-c^2\Delta v_{j,m}(\bm x,t)=0, & (\bm x,t)\in D_j^e\times\R^+, \\
v_{j,m}(\bm x,t)= f_{j,m}(\bm x,t),                        & (\bm x,t)\in\Gamma_{D_j}\times\R^+,\\
v_{j,m}(\bm x,t)|_{t=0}= \frac{\partial v_{j,m}(\bm x,t)}{\partial t}|_{t=0}=0, & \bm x\in D_j^e.
\end{cases}
\label{eq:waveeqn-closedj}
\end{equation}
Here, $D_j^e=\R^2\backslash\overline{D_j}$. Then boundary data $f_{j,m}$, $j=1,\cdots, N$, $m=1,\cdots,M$ is determined through the following recursive process:
\begin{equation}
f_{j,1}(\bm x,t)=-u^{inc}(\bm x,t),\quad \bm x\in \Gamma_j,\quad t\in \R^+,
\label{eq:BDj1}
\end{equation}
and for $m\ge 1$,
\begin{equation}
f_{j,m+1}(\bm x,t)=-\sum_{k=1,k\neq j}^{N}v_{k,m}(\bm x,t),\quad \bm x\in \Gamma_j,\quad t\in \R^+.
\label{eq:BDjm}
\end{equation}
Utilizing the domain-of-influence property for the bounded-obstacle problem (Theorem~\ref{DoI-bound}), it can be shown that (see for example \cite[Theorem~3.5]{pan2025multi}) that $u_D^M(\bm x,t)=u_D(\bm x,t)$ for all $\bm x\in D^e$ and $0\le t\le c^{-1}M\delta_{min}$.

Based on the re-modeling (\ref{eq:bound-sum}), now it allows to apply the FTH solver developed in \cite{anderson2020high} to solve the wave equation problem (\ref{eq:waveeqn-closedj}) through the Fourier
transform. For any $f,G\in L^2(\mathbb{R})$, the corresponding Fourier transform $F$ and inverse Fourier transform $g$ are defined as
\begin{equation*}
F(\omega)=\mathbb{F}(f):=\int_{-\infty}^{\infty}f(t)e^{i\omega t}dt,\quad g(t)=\mathbb{F}^{-1}(G):=\frac{1}{2\pi}\int_{-\infty}^{\infty}G(\omega)e^{-i\omega t}d\omega,\quad \omega,t\in\R.
\end{equation*}
Let $V_{j,m}=\mathbb{F}(v_{j,m})$ and $F_{j,m}=\mathbb{F}(f_{j,m})$, then the problems \eqref{eq:waveeqn-closedj} can be transformed into frequency-domain problems of Helmholtz equation with wave number $\kappa(\omega)=\frac{\omega}{c}$:
\begin{equation}
    \begin{cases}
        \Delta V_{j,m}(\bm x,\omega)+\kappa^2 V_{j,m}(\bm x,\omega)=0,\quad \bm x\in D_j^e, \\
        V_{j,m}(\bm x,\omega)=F_{j,m}(\bm x,\omega),\quad \bm x\in \Gamma_{D_j},
    \end{cases}
    \label{eq:freq-domain-bounded}
\end{equation}
together with the Sommerfeld radiation condition. In particular, the problem (\ref{eq:freq-domain-bounded}) can be efficiently solved by advanced boundary integral equation solvers with high accuracy based on the methodology of using combined field integral equations which can be shown to be uniquely solvable for all $\omega>0$~\cite{ColtonKress2019Inversea}. The solutions $V_{j,m}(\bm x,\omega)$ for $\omega<0$ can be generated through taking the conjugation $\overline{V_{j,m}(\bm x,-\omega)}$.

\section{Multiple-scattering strategy}
\label{sec:3}

Note that the well-posedness of the corresponding frequency-domain problems of Helmholtz equation in a general multi-layered medium still remains open due to the possible existence of trapped waves and as aforementioned, developing frequency-robust integral equation solvers for those problems also remains challenging. In this section, instead of solving the wave equation problem \eqref{eq-transmisson-N}--\eqref{eq-transmisson-condition-N} in a general $N$-layered medium ($N\ge 3$) directly, we propose a novel multiple scattering algorithm providing an equivalent representation of the solution as a multiple scattering series of $N-1$ two-layered medium subproblems.  
For $m\in\mathbb{N}$ and $j=1,\cdots,N-1$, we denote by $\widetilde{u}_{j,m}^+, \widetilde{u}_{j+1,m}^-$ the solution to the problem (\ref{eq:two-layer0}) with boundary data $\widetilde{f}_{j,m}, \widetilde{g}_{j,m}$ which will be determined through certain recursive process, i.e.,
\begin{equation}
\begin{cases}
\frac{\partial^2 \widetilde u_{j,m}^+(\bm x,t)}{\partial t^2}-c_j^2\Delta \widetilde u_{j,m}^+(\bm x,t)=0,                                                       & (\bm x,t)\in \widetilde\Omega_{j}\times\mathbb{R}^+,\\
\frac{\partial^2 \widetilde u_{j+1,m}^-(\bm x,t)}{\partial t^2}-c_{j+1}^2\Delta \widetilde u_{j+1,m}^-(\bm x,t)=0,                                                       & (\bm x,t)\in \widetilde\Omega_{j+1}\times\mathbb{R}^+,\\
\widetilde u_{j+1,m}^-(\bm x,t)-\widetilde u_{j,m}^+(\bm x,t)=\widetilde f_{j,m}(\bm x,t), & (\bm x,t)\in \Gamma_j\times\mathbb{R}^+,   \\
\rho_{j+1}^{-1}\frac{\partial \widetilde u_{j+1,m}^-(\bm x,t)}{\partial \bm n}-\rho_{j}^{-1}\frac{\partial \widetilde u_{j,m}^+(\bm x,t)}{\partial \bm n}=\widetilde g_{j,m}(\bm x,t) & (\bm x,t)\in \Gamma_j\times\mathbb{R}^+,\\
\widetilde u_{j,m}^+(\bm x,t)|_{t=0}=\frac{\partial \widetilde u_{j,m}^+(\bm x,t)}{\partial t}|_{t=0}=0, & \bm x\in\widetilde\Omega_j,\\
\widetilde u_{j+1,m}^-(\bm x,t)|_{t=0}=\frac{\partial \widetilde u_{j+1,m}^-(\bm x,t)}{\partial t}|_{t=0}=0, & \bm x\in\widetilde\Omega_{j+1}.
\end{cases}
\label{eq:two-layer1}
\end{equation}

\subsection{Multiple scattering: $N=3$}
\label{sec:3.1}

For convenience, we first discuss our multiple scattering idea for the case $N=3$ in this subsection and the extension of the algorithm to the general case $N\ge 3$ will be presented in Section~\ref{sec:3.2}.

We first construct a ``multiplicative-type'' multiple scattering in which the wave propagation in the three-layered medium can be viewed as a ``ping-pong'' scattering between the two interfaces. As a result, time-domain solution of the original problem \eqref{eq-transmisson-N}--\eqref{eq-transmisson-condition-N} can be reformulated as the sum of ``ping-pong'' wave-equation solutions produced under multiple scattering. Here, we call
\begin{equation*}
j'=j'(m):=2-\mathrm{mod}(m,2),\quad m\in\mathbb{N},
\end{equation*}
and inductively define the boundary-condition functions $\widetilde{f}_{j',m}, \widetilde{g}_{j',m}$ and associated wave-equation solutions
$\widetilde u_{j',m}^+, \widetilde u_{j'+1,m}^-$, $m\in\mathbb N$ as follows.

\begin{definition}\label{def:ms3-bound-con-mul}
For $m\in\mathbb{N}$, $N=3$, let $\widetilde u_{j',m}^+, \widetilde u_{j'+1,m}^-$ be the solutions to the problem (\ref{eq:two-layer1}) where $\widetilde{f}_{j',m}, \widetilde{g}_{j',m}$ are defined inductively via the relations
\begin{align}
(\widetilde{f}_{1,1},\widetilde{g}_{1,1})& =(f_1,g_1), \quad m=1,\\
(\widetilde{f}_{2,2},\widetilde{g}_{2,2})& =\left(f_2+\widetilde u_{2,1}^-,g_2+ \rho_{2}^{-1}\frac{\partial \widetilde u_{2,1}^-}{\partial\bm n} \right),\quad m=2,
\end{align}
for $m\ge 3$ and $m$ is odd, 
\begin{align}
(\widetilde{f}_{1,m},\widetilde{g}_{1,m})& ={\left( -\widetilde u_{2,m-1}^+,  -\rho_{2}^{-1}\frac{\partial \widetilde u_{2,m-1}^+}{\partial\bm n}\right)},
\end{align}
and, for $m\ge 3$ and $m$ is even,
\begin{align}
(\widetilde{f}_{2,m},\widetilde{g}_{2,m})& ={\left( \widetilde u_{2,m-1}^-,  \rho_{2}^{-1}\frac{\partial \widetilde u_{2,m-1}^-}{\partial\bm n}\right)}.
\end{align}
\end{definition}

Following the evaluation steps
\begin{align*}
(\widetilde u_{1,1}^+, \widetilde u_{2,1}^-) \rightarrow (\widetilde u_{2,2}^+, \widetilde u_{3,2}^-) \rightarrow (\widetilde u_{1,3}^+, \widetilde u_{2,3}^-) \rightarrow (\widetilde u_{2,4}^+, \widetilde u_{3,4}^-) \rightarrow ... \rightarrow (\widetilde u_{j'(M),M}^+, \widetilde u_{j'(M)+1,M}^-),
\end{align*}
now we can define an $M$-th ($M\in\mathbb{N}$) order sum in terms of the solutions
$\widetilde u_{j',m}^+, \widetilde u_{j'+1,m}^-$, $m\in\mathbb N$ as
\begin{equation}
\begin{aligned}
\widehat u_{1}^M & = \sum_{n=1}^{\lceil M/2 \rceil}\widetilde u_{1,2n-1}^+,\quad \widehat u_{2}^M & = \sum_{n=1}^{\lceil M/2 \rceil} \widetilde u_{2,2n-1}^- + \sum_{n=1}^{\lfloor M/2 \rfloor} \widetilde u_{2,2n}^+, \quad \widehat u_{3}^M & = \sum_{n=1}^{\lfloor M/2 \rfloor} \widetilde u_{3,2n}^-.
\end{aligned}
\label{eq:threelayer-mul-ms}
\end{equation}
Then the domain-of-influence property induces the following equivalence result.

\begin{theorem}\label{th:ms3-mul}
Assume that $f_j,g_j=0$ for $t\le t_0, j=1,2$. Let $ M \in \mathbb{N}$ with $ M \geq 2 $, and define $T(M)=t_0+c_2^{-1}d_{1,2}M$. Then we have $ u_j(\bm x, t) = \widehat u_j^M(\bm x, t) $ for all $(\bm x, t)\in \Omega_j \times [0, T(M-1)]$, $j=1,2,3$.
\end{theorem}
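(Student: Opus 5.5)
The argument has three parts: show that $\widehat u^M$ satisfies the three-layer problem exactly except for a defect on one interface; show by induction that this defect vanishes on a time window growing with $M$; and then invoke uniqueness with a finite-speed localization.

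\textbf{Step 1: the defect of the truncated sum.} First I will check that $(\widehat u_1^M,\widehat u_2^M,\widehat u_3^M)$ satisfies the wave equations in $\Omega_j\times\mathbb{R}^+$ and has zero initial data. This holds because each summand solves a wave equation in $\widetilde\Omega_{j}\supset\Omega_j$ with speed $c_j$.

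Next I compute the jumps of the sum across $\Gamma_1$ and $\Gamma_2$ using Definition~\ref{def:ms3-bound-con-mul}. On $\Gamma_1$ the jump is
\[
\widehat u_2^M-\widehat u_1^M=\sum_{n}\bigl(\widetilde u_{2,2n-1}^--\widetilde u_{1,2n-1}^+\bigr)+\sum_{n}\widetilde u_{2,2n}^+ .
\]
The odd-order boundary data $\widetilde f_{1,1}=f_1$ and $\widetilde f_{1,2n+1}=-\widetilde u^+_{2,2n}$ telescope. The same happens on $\Gamma_2$ with $\widetilde f_{2,2}=f_2+\widetilde u_{2,1}^-$ and $\widetilde f_{2,2n}=\widetilde u_{2,2n-1}^-$, and identically for the flux conditions.

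After telescoping, the transmission conditions \eqref{eq-transmisson-condition-N} hold exactly on both interfaces except for one leftover term. This term is the field generated at the last step $M$ that has not yet been reflected. If $M$ is even it is $\widetilde u_{2,M}^+$, together with its normal flux, on $\Gamma_1$. If $M$ is odd it is $-\widetilde u_{2,M}^-$ and its flux on $\Gamma_2$. The main bookkeeping is distinguishing these parities and getting the signs right.

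\textbf{Step 2: vanishing of the leftover by induction.} I will show by induction on $m$ that the data $\widetilde f_{j',m},\widetilde g_{j',m}$, and hence the solutions $\widetilde u_{j',m}^\pm$, vanish on $\Gamma_{j'}$ for $t\le t_0+c_2^{-1}d_{1,2}(m-2)$ when $m\ge 2$.

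For $m=2$ this uses only the hypothesis that $f_2,g_2=0$ for $t\le t_0$, together with causality of $\widetilde u_{2,1}^-$. The latter holds because $\widetilde u_{2,1}^-$ is driven by data that vanish for $t\le t_0$, so by Corollary~\ref{DoI-twolayer} it vanishes up to $t_0$ everywhere.

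For the inductive step, suppose the data at step $m$ vanish for $t\le \tau_m$. Corollary~\ref{DoI-twolayer} then gives that the field in $\Omega_2$ vanishes at every $\bm x$ with $t\le\tau_m+c_2^{-1}\mathrm{dist}(\bm x,\Gamma_{j'(m)})$. Every point of the opposite interface is at distance at least $d_{1,2}$ from $\Gamma_{j'(m)}$, so the data at step $m+1$ vanish for $t\le\tau_m+c_2^{-1}d_{1,2}$.

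Applying this to the leftover term shows that the defect vanishes on $[0,t_0+c_2^{-1}d_{1,2}(M-1)]=[0,T(M-1)]$. The precise offset (whether the count starts at $M-1$ or $M-2$) must be tracked carefully against the definition of $T$. I expect this index accounting, including the special first step where $f_2$ enters directly, to be the main obstacle.

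\textbf{Step 3: uniqueness with localization.} Consider the difference $w_j=u_j-\widehat u_j^M$. It solves the three-layer problem with zero data on $\Gamma_1$ and $\Gamma_2$ for $t\le T(M-1)$. By Theorem~\ref{th:transmission-well-posedness}, applied with the defect replaced by zero, $w$ vanishes on $[0,T(M-1)]$.

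To make this rigorous I will cut the defect data off smoothly in time, so that it vanishes identically. The resulting problem has the same solution as the original up to time $T(M-1)$, by the finite-speed energy argument that underlies Theorem~\ref{DoI-Nlayer}. The zero-data problem then has only the trivial solution by Theorem~\ref{th:transmission-well-posedness}. Hence $u_j=\widehat u_j^M$ on $\Omega_j\times[0,T(M-1)]$.
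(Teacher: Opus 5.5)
Your proposal is correct and follows essentially the same route as the paper. You use the same three ingredients: telescoping that leaves the single defect term $\widetilde u_{2,M}^+$ on $\Gamma_1$ ($M$ even) or $-\widetilde u_{2,M}^-$ on $\Gamma_2$ ($M$ odd), an induction via Corollary~\ref{DoI-twolayer} showing that the step-$m$ data vanish up to $T(m-2)$, and uniqueness. Your Step~3 makes explicit the finite-time-interval (causality) form of Theorem~\ref{th:transmission-well-posedness}, which the paper uses without comment. The time cutoff is unnecessary, though: the energy argument can be run directly on $[0,T(M-1)]$.
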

\begin{proof}
By Theorem \ref{th:transmission-well-posedness}, it suffices to show that the multiple scattering series \eqref{eq:threelayer-mul-ms} satisfies
\begin{align}
    &\widehat u_{j+1}^M(\bm x, t)-\widehat u_{j}^M(\bm x, t)=f_j(\bm x, t),\quad  & (\bm x,t)\in \Gamma_j\times [0,T(M-1)], \label{eq:sub-trans1}\\
    &\rho_{j+1}^{-1}\frac{\partial \widehat u_{j+1}^M(\bm x,t)}{\partial \bm n}-\rho_{j}^{-1}\frac{\partial \widehat u_{j}^M(\bm x,t)}{\partial \bm n}=g_{j}(\bm x,t), & (\bm x,t)\in \Gamma_j\times [0,T(M-1)],\label{eq:sub-trans2}
\end{align}
for $j=1,2$.
To do this, we first show that 
\begin{equation}
    \widetilde{f}_{j'(M),M}(\bm x, t) =\widetilde{g}_{j'(M),M}(\bm x, t)= 0, \quad (\bm x, t) \in \Gamma_{j'(M)} \times [0, T(M-2)].
    \label{eq:casulity-trans-con}
\end{equation}
By the Definition \ref{def:ms3-bound-con-mul}, $\widetilde{f}_{1,1},\widetilde{g}_{1,1}$ are casual functions satisfying $\widetilde{f}_{1,1}=\widetilde{g}_{1,1}=0$ for $t\le t_0$, thus $\widetilde u_{2,1}^-(\bm x, t)=\frac{\partial \widetilde u_{2,1}^-(\bm x, t)}{\partial\bm n}=0$ for $(\bm x, t)\in \Gamma_2\times [0,T(1)]$ by Corollary~\ref{DoI-twolayer}. Thus $\widetilde f_{2,2}=f_2+\widetilde u_{2,1}^-$ and $\widetilde g_{2,2}=g_2+ \rho_{2}^{-1}\frac{\partial \widetilde u_{2,1}^-}{\partial\bm n}$ are also casual functions such that (\ref{eq:casulity-trans-con}) holds for $M=2$. For $M> 2$, We prove it by induction on $ M $.
Assume that \eqref{eq:casulity-trans-con} holds for any $M\in \mathbb N$ with $ 2\le M \le L,L\ge 2$, by Corollary~\ref{DoI-twolayer}, it follows that
\begin{align*}
     \widetilde u_{2,L}^-(\bm x, t)&=\frac{\partial \widetilde u_{2,L}^-(\bm x, t)}{\partial\bm n}=0, \quad (\bm x, t)\in \Gamma_2\times [0,T(L-1)],\quad  L \text{ is odd},\\
    \widetilde u_{2,L}^+(\bm x, t)&=\frac{\partial \widetilde u_{2,L}^+(\bm x, t)}{\partial\bm n}=0, \quad (\bm x, t)\in \Gamma_1\times [0,T(L-1)],\quad  L \text{ is even}.
\end{align*}
Then, Definition \ref{def:ms3-bound-con-mul} yields
\begin{align*}
& \widetilde{f}_{1,L+1}=\widetilde{g}_{1,L+1} =0,\quad (\bm x, t)\in \Gamma_1\times [0,T(L-1)],
\quad L+1\;\mbox{is odd},\\
& \widetilde{f}_{2,L+1}=\widetilde{g}_{2,L+1} =0,\quad (\bm x, t)\in \Gamma_2\times [0,T(L-1)],
\quad L+1\;\mbox{is even},
\end{align*}
and hence, the relation \eqref{eq:casulity-trans-con} holds for $M=L+1$. Next, we check the transmission conditions~\eqref{eq:sub-trans1}-\eqref{eq:sub-trans2}. 
On the interface $\Gamma_1$,
\begin{align*}
\widehat u_{2}^M - \widehat u_{1}^M &= \sum_{n=1}^{\lceil M/2 \rceil} \widetilde{u}^{-}_{2,2n-1} + \sum_{n=1}^{\lfloor M/2 \rfloor} \widetilde{u}^{+}_{2,2n} - \sum_{n=1}^{\lceil M/2 \rceil} \widetilde{u}^{+}_{1,2n-1} \\
&= \sum_{n=1}^{\lceil M/2 \rceil}\widetilde f_{1,2n-1} + \sum_{n=1}^{\lfloor M/2 \rfloor} \widetilde{u}^{+}_{2,2n} \\
&=  f_1+\sum_{n=2}^{\lceil M/2 \rceil}\widetilde f_{1,2n-1} + \sum_{n=1}^{\lfloor M/2 \rfloor} \widetilde{u}^{+}_{2,2n}  \\
&=  f_1-\sum_{n=2}^{\lceil M/2 \rceil} \widetilde{u}^{+}_{2, 2n-2} + \sum_{n=1}^{\lfloor M/2 \rfloor} \widetilde{u}^{+}_{2,2n} \\
&= \begin{cases}
f_1, & M \text{ is odd},\\
f_1+\widetilde{u}^{+}_{2,M} , & M \text{ is even}.
\end{cases}
\end{align*}
On the interface $\Gamma_2$,
\begin{align*}
\widehat u_{3}^M - \widehat u_{2}^M &= \sum_{n=1}^{\lfloor M/2 \rfloor} \widetilde{u}^{-}_{3,2n} - \sum_{n=1}^{\lceil M/2 \rceil} \widetilde{u}^{-}_{2,2n-1} - \sum_{n=1}^{\lfloor M/2 \rfloor} \widetilde{u}^{+}_{2,2n} \\
&= \sum_{n=1}^{\lfloor M/2 \rfloor}\widetilde f_{2,2n} - \sum_{n=1}^{\lceil M/2 \rceil} \widetilde{u}^{-}_{2,2n-1} \\
&= f_2 + \widetilde{u}^{-}_{2,1}+\sum_{n=2}^{\lfloor M/2 \rfloor} \widetilde{u}^{-}_{2, 2n-1} - \sum_{n=1}^{\lceil M/2 \rceil} \widetilde{u}^{-}_{2,2n-1}  \\
&= f_2 + \sum_{n=1}^{\lfloor M/2 \rfloor} \widetilde{u}^{-}_{2,2n-1} - \sum_{n=1}^{\lceil M/2 \rceil} \widetilde{u}^{-}_{2,2n-1} \\
&= \begin{cases}
f_2 - \widetilde{u}^{-}_{2,M}, & M \text{ is odd}, \\
f_2, & M \text{ is even}.
\end{cases}
\end{align*}
By the causal condition \eqref{eq:casulity-trans-con} and Corollary~\eqref{DoI-twolayer}, it concludes that $\widetilde{u}^{+}_{2,M}(\bm x, t)=0$ for $(\bm x, t)\in \Gamma_1\times [0,T(M-1)]$ with even $M$ and $\widetilde{u}^{-}_{2,M}(\bm x, t)=0$ for $(\bm x, t)\in \Gamma_2\times [0,T(M-1)]$ with odd $M$, which further implies that \eqref{eq:sub-trans1} holds true. The relation \eqref{eq:sub-trans2} holds following analogous arguments.
\end{proof}

In fact, the ``multiplicative-type'' multiple scattering process discussed above can be further rewritten as a new ``additive-type'' multiple scattering process which will be more efficient to construct parallelizable multiple scattering algorithm for general case $N\ge 3$. To do this, for any $m\in\mathbb{N}$, $j=1,2,3$, we inductively define the boundary-condition functions $\widetilde{f}_{j,m}, \widetilde{g}_{j,m}$ and associated wave-equation solutions
$\widetilde u_{j,m}^+, \widetilde u_{j+1,m}^-$, $m\in\mathbb N$ as follows.

\begin{definition}\label{def:ms3-bound-con-add}
For $m\in \mathbb{N}$ and $N=3$, let $\widetilde u_{j,m}^+, \widetilde u_{j+1,m}^-, j=1,2,3$ be the solutions to the problem (\ref{eq:two-layer1}) where $\widetilde{f}_{j,m}, \widetilde{g}_{j,m}, j=1,2$ are  inductively defined via the relations, for $m=1$,
    \begin{equation}
       (\widetilde f_{j,1}, \widetilde g_{j,1})= (f_j,g_j),\quad j=1,2,
        \label{eq:inc31}
    \end{equation}
and, for $m\ge 2$,
    \begin{align}
       \left( \widetilde f_{1,m}, \widetilde g_{1,m}\right)&=\left( -\widetilde  u_{2,m-1}^+,-\rho_{2}^{-1}\frac{\partial \widetilde  u_{2,m-1}^+}{\partial\bm n}  \right),\\
            \left(\widetilde f_{2,m},\widetilde g_{2,m}\right)&=\left(\widetilde u_{2,m-1}^-, \rho_{1}^{-1} \frac{\partial \widetilde  u_{2,m-1}^-}{\partial\bm n } \right).
        \label{eq:ms3-bound-con-add}
    \end{align}
\end{definition}

Unlike the ``multiplicative-type'' multiple scattering process, in each  ``additive-type'' multiple scattering step, we can solve the two 2-layered sub-problems (\ref{eq:two-layer1}) in parallel, i.e., following the evaluation steps
\begin{align*}
(\widetilde u_{1,1}^+, \widetilde u_{2,1}^-)\;\&\; (\widetilde u_{2,1}^+, \widetilde u_{3,1}^-) \rightarrow (\widetilde u_{1,2}^+, \widetilde u_{2,2}^-)\;\&\;(\widetilde u_{2,2}^+, \widetilde u_{3,2}^-)  \rightarrow ... \rightarrow (\widetilde u_{1,M}^+, \widetilde u_{2,M}^-)\;\&\; (\widetilde u_{2,M}^+, \widetilde u_{3,M}^-),
\end{align*}
and then sum them up in each layer to get a new $M$-th ($M\in\mathbb{N}$) order sum in terms of the solutions
$\widetilde u_{j,m}^+, \widetilde u_{j+1,m}^-$, $j=1,2,3, m\in\mathbb N$ as
 \begin{equation}
\widetilde u_{1}^M = \sum_{m=1}^{M}\widetilde  u_{1,m}^+,\quad \widetilde u_{2}^M = \sum_{m=1}^{M}\widetilde  u_{2,m}^-+\widetilde  u_{2,m}^+,\quad \widetilde u_{3}^M = \sum_{m=1}^{M}\widetilde  u_{3,m}^-.
\label{eq:threelayer-ms-add}
\end{equation}
The solution resulting from the ``additive-type'' multiple scattering process is also equivalent to the solution to the original problem within a finite time interval depending on $M$ which is summarized in the following theorem.
\begin{theorem}
\label{th:ms3-add}
Let $ M \in \mathbb{N} $ with $ M \geq 2 $. Then  we have $ u_j(\bm x, t) = \widetilde u_j^M(\bm x, t) $ for $ (\bm x, t) \in \Omega_j \times [0, T(M)] $, $j=1,2,3$.
\end{theorem}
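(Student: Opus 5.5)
Following the proof of Theorem~\ref{th:ms3-mul}, Theorem~\ref{th:transmission-well-posedness} reduces the claim to two checks. Each $\widetilde u_j^M$ is a finite sum of solutions of the homogeneous wave equation in $\widetilde\Omega_j\supset\Omega_j$ (or $\widetilde\Omega_{j+1}\supset\Omega_{j+1}$) with zero initial data, so it solves \eqref{eq-transmisson-N} with zero initial conditions. What remains is to show that the transmission conditions \eqref{eq-transmisson-condition-N} hold on $\Gamma_j\times[0,T(M)]$ for $j=1,2$. Here I read the normal-derivative data in \eqref{eq:ms3-bound-con-add} as $\rho_2^{-1}\partial_{\bm n}\widetilde u_{2,m-1}^-$; the $\rho_1^{-1}$ appears to be a typo, and the telescoping below needs $\rho_2^{-1}$.

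\emph{Step 1 (causality of the data).} I will show by induction on $m$ that
\[
\widetilde f_{j,m}=\widetilde g_{j,m}=0 \quad\text{on } \Gamma_j\times[0,t_0+c_2^{-1}d_{1,2}(m-1)],\qquad j=1,2.
\]
For $m=1$ this is the hypothesis on $f_j,g_j$. For the inductive step, suppose the data of step $m$ vanish up to $t_0+c_2^{-1}d_{1,2}(m-1)$. Corollary~\ref{DoI-twolayer} applied to the $\Gamma_2$-problem gives $\widetilde u_{2,m}^+(\bm x,t)=0$ for $t\le t_0+c_2^{-1}d_{1,2}(m-1)+c_2^{-1}\mathrm{dist}(\bm x,\Gamma_2)$. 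On $\Gamma_1$ we have $\mathrm{dist}(\bm x,\Gamma_2)\ge d_{1,2}$, so $\widetilde u_{2,m}^+$ vanishes on $\Gamma_1$ up to $t_0+c_2^{-1}d_{1,2}m$. Since the vanishing holds in a space-time neighbourhood on the $\Omega_2$ side, the normal derivative vanishes there as well. The same argument with the $\Gamma_1$-problem shows that $\widetilde u_{2,m}^-$ and its normal derivative vanish on $\Gamma_2$ up to the same time. The recursion \eqref{eq:ms3-bound-con-add} then gives the claim for $m+1$.

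\emph{Step 2 (telescoping).} On $\Gamma_1$,
\[
\widetilde u_2^M-\widetilde u_1^M=\sum_{m=1}^M\bigl(\widetilde u_{2,m}^--\widetilde u_{1,m}^+\bigr)+\sum_{m=1}^M\widetilde u_{2,m}^+=\sum_{m=1}^M\widetilde f_{1,m}+\sum_{m=1}^M\widetilde u_{2,m}^+ .
\]
Using $\widetilde f_{1,1}=f_1$ and $\widetilde f_{1,m}=-\widetilde u_{2,m-1}^+$ for $m\ge2$, this equals $f_1+\widetilde u_{2,M}^+$. Similarly, on $\Gamma_2$,
\[
\widetilde u_3^M-\widetilde u_2^M=\sum_{m=1}^M\widetilde f_{2,m}-\sum_{m=1}^M\widetilde u_{2,m}^-=f_2-\widetilde u_{2,M}^-.
\]
The flux conditions telescope the same way: on each interface the remainder is $\pm\rho_2^{-1}\partial_{\bm n}$ of the same last term.

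\emph{Step 3 (remainders vanish).} By Step 1 at level $M$ together with Corollary~\ref{DoI-twolayer}, the remainders $\widetilde u_{2,M}^+|_{\Gamma_1}$ and $\widetilde u_{2,M}^-|_{\Gamma_2}$, and their normal derivatives, vanish for $t\le t_0+c_2^{-1}d_{1,2}M=T(M)$. Hence \eqref{eq-transmisson-condition-N} holds on $[0,T(M)]$.

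The main obstacle is the uniqueness step. Theorem~\ref{th:transmission-well-posedness} is stated for all $t>0$, but we only know the conditions on $[0,T(M)]$. I would handle this with the energy argument on the finite interval: the difference $u_j-\widetilde u_j^M$ has zero data up to $T(M)$, so its energy is conserved and zero there. A secondary point is to justify the vanishing of normal derivatives on the interface; openness of the region where the field vanishes, up to the boundary, handles this.
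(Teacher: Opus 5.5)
Your proposal is correct and follows the paper's proof. Both arguments show by induction, using Corollary~\ref{DoI-twolayer}, that the recursive data vanish up to $T(m-1)$, then telescope the sums on each interface so the only defect is $\widetilde f_{j,M+1}$ (your $f_1+\widetilde u_{2,M}^+$ and $f_2-\widetilde u_{2,M}^-$), which vanishes on $[0,T(M)]$, and finally invoke uniqueness. You are also right on two further points: the flux datum in \eqref{eq:ms3-bound-con-add} must carry $\rho_2^{-1}$ rather than $\rho_1^{-1}$ for the flux conditions to telescope, and uniqueness is needed only on the finite interval $[0,T(M)]$, a step the paper leaves implicit.
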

\begin{proof}
Analogously to the proof of Theorem \ref{th:ms3-mul}, it is easy to show that $\widetilde f_{j,M} = 0$ for $(\bm x, t) \in \Gamma_j \times [0, T(M-1)], j=1,2$.
It suffices to check the transmission condition satisfied by the multiple scattering series solution \eqref{eq:threelayer-ms-add} on $\Gamma_1$ and $\Gamma_2$ for $t\in[0,T(M)]$. By Definition \ref{def:ms3-bound-con-add}, it holds that
\begin{align*}
\widetilde u_{2}^M- \widetilde u_{1}^M & = \sum_{m=1}^{M}\left(\widetilde  u_{2,m}^-+\widetilde  u_{2,m}^+ \right)- \sum_{m=1}^{M}\widetilde  u_{1,m}^+ \\
& = \sum_{m=1}^{M}\left(\widetilde  u_{2,m}^--\widetilde  u_{1,m}^+\right)+\sum_{m=1}^{M}\widetilde  u_{2,m}^+   \\
& =\sum_{m=1}^{M}(\widetilde f_{1,m}-\widetilde f_{1,m+1})   \\
& =\widetilde f_{1,1}-\widetilde f_{1,M+1}
\end{align*}
on the interface $\Gamma_1$ and
\begin{align*}
\widetilde u_{3}^M- \widetilde u_{2}^M & = \sum_{m=1}^{M}\widetilde  u_{3,m}^- - \sum_{m=1}^{M}\left(\widetilde  u_{2,m}^-+\widetilde  u_{2,m}^+\right) \\
& = \sum_{m=1}^{M}\left(\widetilde  u_{3,m}^--\widetilde  u_{2,m}^+\right)-\sum_{m=1}^{M}\widetilde  u_{2,m}^-   \\
& =\sum_{m=1}^{M}\widetilde f_{2,m}-\widetilde f_{2,m+1}                                          \\
& =\widetilde f_{2,1}-\widetilde f_{2,M+1}
\end{align*}
on the interface $\Gamma_2$. Since $\widetilde f_{j,M+1}=0$ for  $(\bm x,t) \in \Gamma_j \times [0, T(M)],j=1,2$, we get
\[
\widetilde u_{j+1}^M -\widetilde  u_{j}^M = f_{j}, \quad  (\bm x, t) \in \Gamma_j \times [0, T(M)], \; j=1,2.
\]
Following a similar discussion for the normal derivatives, we conclude that
\[
\rho_{j+1}^{-1} \frac{\partial \widetilde u_{j+1}^M}{\partial\bm n} - \rho_j^{-1} \frac{\partial \widetilde u_{j}^M}{\partial\bm n} = g_{j}, \quad  (\bm x, t) \in \Gamma_j \times [0, T(M)], \; j=1,2.
\]
The proof is complete.
\end{proof}

Now we successfully transform the original 3-layered medium wave-equation problem into a multiple scattering evaluation of two 2-layered medium wave-equation sub-problems which does not encounter the aforementioned challenge in developing frequency-robust integral equation solver. This strategy can also be extended to the general $N$-layered medium wave-equation problem with $N\ge 3$ which will be discussed in the next section.

\subsection{Multiple scattering: $N\ge 3$}
\label{sec:3.2}

For any $N\ge 3$, following the ``additive-type'' multiple scattering process developed in Section~\ref{sec:3.1} for $N=3$, we can also inductively define the boundary-condition functions $\widetilde{f}_{j,m}, \widetilde{g}_{j,m}$ for $j=1,\cdots,N-1$ and associated wave-equation solutions
$\widetilde u_{j,m}^+, \widetilde u_{j+1,m}^-$, $m\in\mathbb N$ for $j=1,\cdots,N$ as follows.

\begin{definition}\label{def:ms-bound-con}
For $m\in \mathbb{N}$ and $N\ge 3$, let $\widetilde u_{j,m}^+, \widetilde u_{j+1,m}^-, j=1,\cdots,N$ be the solutions to the problem (\ref{eq:two-layer1}) where $\widetilde{f}_{j,m}, \widetilde{g}_{j,m}, j=1,\cdots,N-1$ are inductively defined via the relations, for $m=1$,
\begin{equation}
(\widetilde f_{j,1}, \widetilde g_{j,1})=(f_j, g_j),\quad j=1,\dots,N-1,
\label{eq:inc1}
\end{equation}
and, for $m\ge 2$,
    \begin{align}
        \begin{cases}
            \widetilde f_{j,m}=(1-\delta_{1,j})\widetilde  u_{j,m-1}^--(1-\delta_{N,j+1})\widetilde  u_{j+1,m-1}^+, \\
            \widetilde g_{j,m}=(1-\delta_{1,j})\rho_{j}^{-1} \frac{\partial\widetilde  u_{j,m-1}^-}{\partial \bm n}-(1-\delta_{N,j+1})\rho_{j+1}^{-1} \frac{\partial \widetilde u_{j+1,m-1}^+}{\partial \bm n}.
        \end{cases}
        \label{eq:inc2}
    \end{align}
\end{definition}

Following the evaluation steps
\begin{align*}
\bigcup_{j=1}^{N-1}(\widetilde u_{j,1}^+, \widetilde u_{j+1,1}^-) \rightarrow \bigcup_{j=1}^{N-1}(\widetilde u_{j,2}^+, \widetilde u_{j+1,2}^-)  \rightarrow ... \rightarrow \bigcup_{j=1}^{N-1}(\widetilde u_{j,M}^+, \widetilde u_{j+1,M}^-),
\end{align*}
we introduce an $M$-th ($M\in\mathbb{N}$) order sum in terms of the solutions $\widetilde u_{j,m}^+, \widetilde u_{j+1,m}^-$, $j=1,\cdots,N, m\in\mathbb N$ as
\begin{align}
\widetilde u_{j}^M=
    \sum_{m=1}^{M}\left((1-\delta_{1,j})\widetilde  u_{j,m}^-+(1-\delta_{N,j})\widetilde  u_{j,m}^+\right),\quad & j=1,\dots,N.
    \label{eq:Nlayer-ms-add}
\end{align}
Then we have the following equivalence theorem.

\begin{theorem}\label{thm:equivalence}
Let $ M \in \mathbb{N} $ with $ M \geq 2 $, and define $T(M) = t_0+ M\min_{1 \leq j \leq N-1}c_j^{-1}d_{j,j+1}$. Then the solution to \eqref{eq-transmisson-N}--\eqref{eq-transmisson-condition-N} can be equivalently represented by the multiple scattering series as $u_j(\bm x, t) = \widetilde u_{j}^M(\bm x, t) $ for $ (\bm x, t) \in \Omega_j \times [0, T(M)] $, $j=1,\cdots,N$.
\end{theorem}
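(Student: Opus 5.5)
The plan follows the proof of Theorem~\ref{th:ms3-add}. By the uniqueness result Theorem~\ref{th:transmission-well-posedness}, applied on the time interval $[0,T(M)]$, it suffices to show two things. First, each $\widetilde u_j^M$ solves the wave equation in $\Omega_j$ with zero initial data; this is immediate, since every summand in \eqref{eq:Nlayer-ms-add} does so on $\widetilde\Omega_j$ or $\widetilde\Omega_{j+1}$, both of which contain $\Omega_j$. Second, the series satisfies the transmission conditions \eqref{eq-transmisson-condition-N} on every $\Gamma_j\times[0,T(M)]$, $j=1,\dots,N-1$. The argument therefore has two ingredients: a telescoping identity on each interface, and a causality estimate for the boundary data $\widetilde f_{j,m},\widetilde g_{j,m}$.

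\textbf{Step 1 (causality).} I will prove by induction on $m$ that $\widetilde f_{j,m}=\widetilde g_{j,m}=0$ on $\Gamma_j\times[0,T(m-1)]$ for all $j$, with the convention $T(0)=t_0$.
\begin{itemize}
\item For $m=1$ this is the assumption on $f_j,g_j$.
\item Suppose it holds at level $m$. Corollary~\ref{DoI-twolayer} gives $\widetilde u_{j,m}^-(\bm x,t)=0$ for $t\le T(m-1)+c_j^{-1}\mathrm{dist}(\bm x,\Gamma_{j-1})$. For $\bm x\in\Gamma_{j}$ we have $\mathrm{dist}(\bm x,\Gamma_{j-1})\ge d_{j-1,j}$, so the delay is at least $\min_k c_k^{-1}d_{k,k+1}$. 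Hence $\widetilde u_{j,m}^-$ vanishes on $\Gamma_j\times[0,T(m)]$.
\item Likewise $\widetilde u_{j+1,m}^+$, generated on $\Gamma_{j+1}$ and travelling in $\Omega_{j+1}$ at speed $c_{j+1}$, vanishes on $\Gamma_j\times[0,T(m)]$.
\item The normal derivatives vanish as well, since the functions vanish on an open neighbourhood in space-time. By \eqref{eq:inc2}, $\widetilde f_{j,m+1}$ and $\widetilde g_{j,m+1}$ then vanish on $[0,T(m)]$.
\end{itemize}

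\textbf{Step 2 (telescoping).} On $\Gamma_j$ I will compute
\[
\widetilde u_{j+1}^M-\widetilde u_j^M=\sum_{m=1}^M\bigl(\widetilde u_{j+1,m}^--\widetilde u_{j,m}^+\bigr)+\sum_{m=1}^M\bigl((1-\delta_{N,j+1})\widetilde u_{j+1,m}^+-(1-\delta_{1,j})\widetilde u_{j,m}^-\bigr).
\]
In the first sum each term equals $\widetilde f_{j,m}$. By \eqref{eq:inc2}, each term of the second sum equals $-\widetilde f_{j,m+1}$. The difference therefore telescopes to $\widetilde f_{j,1}-\widetilde f_{j,M+1}=f_j-\widetilde f_{j,M+1}$. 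The same computation applies to the flux: one uses the $\rho$-weighted normal derivatives, and by \eqref{eq:inc2} the weights $\rho_j^{-1}$ and $\rho_{j+1}^{-1}$ attach correctly to $\widetilde u_{j,m}^-$ and $\widetilde u_{j+1,m}^+$. This gives $g_j-\widetilde g_{j,M+1}$. By Step 1 the remainders vanish on $[0,T(M)]$, which completes the proof.

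\textbf{Main obstacle.} The delicate point is Step 1. Two things need care. The delay across a layer $\Omega_j$ is $d_{j-1,j}/c_j$, and this must be matched consistently with the minimum appearing in $T(M)$; the stated $\min_j c_j^{-1}d_{j,j+1}$ has an index shift that I will reconcile by taking the minimum over the interior layers $2,\dots,N-1$. The boundary layers $\Omega_1$ and $\Omega_N$ generate no reflected data, because of the Kronecker-delta factors. The other issue is justifying that the vanishing of a solution near the interface gives vanishing traces of its normal derivative. This follows because Corollary~\ref{DoI-twolayer} holds pointwise for all $\bm x$ in a neighbourhood of $\Gamma_j$ up to a slightly smaller time, and the statement is then extended to the closed interval by continuity.
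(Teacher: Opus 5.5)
Your proposal is correct and is essentially the paper's own proof. The paper likewise shows by recursion that $\widetilde f_{j,M}$ and $\widetilde g_{j,M}$ vanish on $\Gamma_j\times[0,T(M-1)]$, uses \eqref{eq:inc2} to telescope the jump and the flux on each $\Gamma_j$ to $\widetilde f_{j,1}-\widetilde f_{j,M+1}$ and $\widetilde g_{j,1}-\widetilde g_{j,M+1}$, and concludes by uniqueness. Your reading of the delay as $\min_{2\le k\le N-1}c_k^{-1}d_{k-1,k}$ is the right one: it reduces to $T(M)=t_0+c_2^{-1}d_{1,2}M$ for $N=3$, whereas the literal $c_j^{-1}d_{j,j+1}$ uses the wrong layer speed and an undefined $d_{N-1,N}$. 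Your continuity argument for the normal-derivative traces spells out a step the paper only asserts.
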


\begin{proof}
Analogously to the proof of Theorem \ref{th:ms3-mul}, it is easy to show that $\widetilde f_{j,M} = 0$ for $(\bm x, t) \in \Gamma_j \times [0, T(M-1)]$  by recursion and the same result holds for the boundary data $\widetilde g_{j,M}$. Then it suffices to check the transmission conditions satisfied by the  multiple scattering series solution \eqref{eq:Nlayer-ms-add}. For $ 1 \le j \le N-1 $, Definition \ref{def:ms-bound-con} yields
\begin{align*}
\widetilde u_{j+1}^M-\widetilde u_{j}^M & = \sum_{m=1}^{M}\left((1-\delta_{1,j+1})\widetilde  u_{j+1,m}^-+(1-\delta_{N,j+1})\widetilde  u_{j+1,m}^+\right) - \sum_{m=1}^{M}\left((1-\delta_{1,j})\widetilde  u_{j,m}^-+(1-\delta_{N,j})\widetilde  u_{j,m}^+\right) \\ 
                          & =\sum_{m=1}^{M}\left(\widetilde  u_{j+1,m}^--\widetilde  u_{j,m}^+\right)+\sum_{m=1}^{M}(1-\delta_{N,j+1})\widetilde  u_{j+1,m}^+-\sum_{m=1}^{M}(1-\delta_{1,j})\widetilde  u_{j,m}^-                \\
                          & =\sum_{m=1}^{M}\widetilde f_{j,m}-\widetilde f_{j,m+1}\\
                          & =\widetilde f_{j,1}-\widetilde f_{j,M+1}.
    \end{align*}
Thus, $\widetilde u_{j+1}^M - \widetilde u_{j}^M = f_{j}$ for $(\bm x, t) \in \Gamma_j \times [0, T(M)], j=1,\cdots,N-1$. Analogously, it can be verified that the relation $\rho_{j+1}^{-1} \frac{\partial \widetilde u_{j+1}^M}{\partial\bm n} - \rho_j^{-1} \frac{\partial \widetilde u_{j}^M}{\partial\bm n} = g_{j}$ also holds for $(\bm x, t) \in \Gamma_j \times [0, T(M)], j=1,\cdots,N-1$. These transmission conditions are consistent with \eqref{eq-transmisson-condition-N}, and then the proof is complete.
\end{proof}

\section{FTH-MS integral equation solver}
\label{sec:4}


Based on the multiple scattering re-modeling (Section~\ref{sec:3}) of the original multi-layered medium problem as a union of multiple two-layered medium problems and analogous to the FTH-MS solver for the wave equation problem with multiple bounded obstacle described in Section~\ref{sec:2.2}, the FTH solver developed in \cite{anderson2020high} can now be extended to the numerical evaluation of each two-layered sub-problems which can further induce a novel FTH-MS integral equation solver for the wave equation problem in a general $N$-layered medium. 

\subsection{FTH-MS algorithm}
\label{sec:4.1}

For the two-layered medium problem (\ref{eq:two-layer1}), let $\widetilde U_{j,m}^+(\bm x,\omega)=\mathbb{F}(\widetilde u_{j,m}^+)(\bm x,\omega)$, $\widetilde U_{j+1,m}^-(\bm x,\omega)=\mathbb{F}(\widetilde u_{j+1,m}^-)(\bm x,\omega)$, $\widetilde F_{j,m}(\bm x,\omega)=\mathbb{F}(\widetilde f_{j,m})(\bm x,\omega)$, $\widetilde G_{j,m}(\bm x,\omega)=\mathbb{F}(\widetilde g_{j,m})(\bm x,\omega)$, $F_j(\bm x,\omega)=\mathbb{F}(f_j)(\bm x,\omega)$ and $G_j(\bm x,\omega)=\mathbb{F}(g_j)(\bm x,\omega)$ for $m\in\mathbb{N}$ and $j=1,\cdots,N-1$. Then denoting $\kappa_j=\omega/c_j$, each problem (\ref{eq:two-layer1}) can be transformed into a frequency-domain two-layered medium scattering problem as:
\begin{equation}
    \begin{cases}
        \Delta \widetilde U_{j,m}^+(\bm x,\omega)+\kappa_j^2 \widetilde U_{j,m}^+(\bm x,\omega)=0,                                                                                        & \bm x\in \widetilde{\Omega}_j,     \\
        \Delta \widetilde U_{j+1,m}^-(\bm x,\omega)+\kappa_{j+1}^2 \widetilde U_{j+1,m}^-(\bm x,\omega)=0,                                                                                    & \bm x\in \widetilde{\Omega}_{j+1}, \\
        \widetilde U_{j+1,m}^-(\bm x,\omega)-\widetilde U_{j,m}^+(\bm x,\omega)=\widetilde F_{j,m}(\bm x,\omega),                                                                                  & \bm x\in \Gamma_j,             \\
        \rho_{j+1}^{-1}\frac{\partial \widetilde U_{j+1,m}^-(\bm x,\omega)}{\partial \bm n}-\rho_{j}^{-1}\frac{\partial \widetilde U_{j,m}^+(\bm x,\omega)}{\partial \bm n}=\widetilde G_{j,m}(\bm x,\omega), & \bm x\in \Gamma_j,
    \end{cases}
    \label{eq:freq-2-layer}
\end{equation}
which can be shown to be well-posed~\cite{Zhang_1994} imposing appropriate Sommerfeld radiation condition.
Analogous to Definition~\ref{def:ms3-bound-con-mul}, the boundary data $\widetilde F_{j,m}, \widetilde G_{j,m}$ in frequency-domain should be inductively defined via the relations, for $m=1$,
\begin{equation}
(\widetilde F_{j,1}, \widetilde G_{j,1})=(F_j, G_j),\quad j=1,\dots,N-1,
\label{eq:inc1-FD}
\end{equation}
and, for $m\ge 2$,
    \begin{align}
        \begin{cases}
            \widetilde F_{j,m}=(1-\delta_{1,j})\widetilde  U_{j,m-1}^--(1-\delta_{N,j+1})\widetilde  U_{j+1,m-1}^+, \\
            \widetilde G_{j,m}=(1-\delta_{1,j})\rho_{j}^{-1} \frac{\partial\widetilde  U_{j,m-1}^-}{\partial \bm n}-(1-\delta_{N,j+1})\rho_{j+1}^{-1} \frac{\partial \widetilde U_{j+1,m-1}^+}{\partial \bm n}.
        \end{cases}
        \label{eq:inc2-FD}
    \end{align}

Then utilizing appropriate Fourier transform algorithm (see Section~\ref{sec:4.2}) and frequency-domain integral equation solver (see Section~\ref{sec:4.3}), the original wave equation problem can be solved by noting from Theorem~\ref{thm:equivalence} that $u_j(\bm x, t) = \widetilde u_{j}^M(\bm x, t) $ for $ (\bm x, t) \in \Omega_j \times [0, T(M)] $, $j=1,\cdots,N$ through the so-called FTH-MS integral equation solver summarized in Algorithm~\ref{alg:msN}. It should be pointed out that the numerical implementation of Algorithm~\ref{alg:msN} require several contents of numerical techniques as follows:
\begin{itemize}
\item A truncation of the frequency interval depending on the frequency spectrum of the incident signal;
\item Appropriate direct and inverse Fourier transform algorithm;
\item Appropriate boundary integral equation solver for the problem (\ref{eq:freq-2-layer}).
\end{itemize}
These will be discussed in the following Sections~\ref{sec:4.2}-\ref{sec:4.3}.

\begin{algorithm}
    \caption{FTH-MS solver for the problem (\ref{eq-transmisson-N}) in an $N$-layered medium}
    \label{alg:msN}
    \begin{algorithmic}[1]
        \STATE Initialize $\widetilde u_{j}^M(\bm x,t)=0, (\bm x,t)\in \Omega_j\times [0,T]$ for $j=1,\cdots,N$.
        \STATE Do $m=1,\cdots,M$
        \STATE \ \ \ \ Do $j=1,\cdots,N-1$
        \STATE \ \ \ \ \ \ \ \ If $m=1$
        \STATE \ \ \ \ \ \ \ \ \ \ \ \ Evaluate the boundary data $\widetilde F_{j,m}(\bm x,\omega), \widetilde G_{j,m}(\bm x,\omega), (\bm x,\omega)\in\Gamma_j\times\R$ via relation (\ref{eq:inc1-FD}).
        \STATE \ \ \ \ \ \ \ \ Else
        \STATE \ \ \ \ \ \ \ \ \ \ \ \ Evaluate the boundary data $\widetilde F_{j,m}(\bm x,\omega), \widetilde G_{j,m}(\bm x,\omega), (\bm x,\omega)\in\Gamma_j\times\R$ via relations (\ref{eq:inc2-FD}).
        \STATE \ \ \ \ \ \ \ \ End If
        \STATE \ \ \ \ \ \ \ \ Solve the problem (\ref{eq:freq-2-layer}) using appropriate boundary integral equation solver.
        \STATE \ \ \ \ \ \ \ \ Compute $\widetilde u_j^M(\bm x,t)\mathrel{+}= \mathbb{F}^{-1}(\widetilde U_{j,m}^+)(\bm x,t), (\bm x,t)\in\Omega_j\times [0,T]$.
        \STATE \ \ \ \ \ \ \ \ Compute $\widetilde u_{j+1}^M(\bm x,t)\mathrel{+}= \mathbb{F}^{-1}(\widetilde U_{j+1,m}^-)(\bm x,t), (\bm x,t)\in\Omega_{j+1}\times [0,T]$.
        \STATE \ \ \ \ End Do
        \STATE End Do
    \end{algorithmic}
\end{algorithm}

\subsection{Specified incident pulse and Fourier transform algorithm}
\label{sec:4.2}

In this work, the incident wave considered in the numerical evaluation is specified, for simplicity, as a Gaussian pulse for which the point source $u^{\text{inc}}(\bm x,t)=u_{\text{point}}^{\text{inc}}(\bm x,t;\bm z_0)$ located at $\bm z_0\in\Omega_{j_0}$ is an inverse Fourier transform of a frequency-domain function
\begin{align*}
U_{\text{point}}^{\text{inc}}(\bm x,\omega,\bm z_0)&=\frac{5i}{2} e^{-\frac{(\omega-\omega_0)^2}{2}} e^{i\omega t_0} \cdot H_0^{(1)}(\kappa_{j_0} |\bm x-\bm z_0|),
\end{align*}
i.e., $u_{\text{point}}^{\text{inc}}(\bm x,t;\bm z_0)=\mathbb F^{-1}(U_{\text{point}}^{\text{inc}})(\bm x,t;\bm z_0)$, and the plane wave $u^{\text{inc}}(\bm x,t)=u_{\text{plane}}^{\text{inc}}(\bm x,t;d^{\text{inc}})$ is, analogously, an inverse Fourier transform of another frequency-domain function
\begin{align*}
U_{\text{plane}}^{\text{inc}}(\bm x, \omega) = \sqrt{2\pi} e^{-\frac{(\omega-\omega_0)^2}{2}} e^{i\omega t_0} e^{i\kappa_{1} \bm x \cdot d^{\text{inc}}}.
\end{align*}
i.e., $u_{\text{plane}}^{\text{inc}}(\bm x,t;\bm z_0)=\mathbb F^{-1}(U_{\text{plane}}^{\text{inc}})(\bm x,t;\bm z_0)$. Here, $\omega_0$ denotes the center frequency, $t_0$ is the time delay, $\bm z_0$ is the location of point source and $d^{\text{inc}}=(\cos\theta^{\text{inc}},\sin\theta^{\text{inc}})^\top$ denotes the incident direction with $\theta^{\text{inc}}\in(-\pi/2,\pi/2)$ being the incident angle. Here we ignore the discussion of critical angle under which total reflection happens. 

Imposing the specified incident fields and noting the exponentially decaying property of the Gaussian pulse as $\omega\rightarrow\infty$, it suffices to evaluate the frequency-domain solutions $\widetilde U_{j,m}^+(\bm x,\omega)$ and $\widetilde U_{j+1,m}^-(\bm x,\omega)$ for $\omega\in [\omega_0-W,\omega_0+W]$ with $W>0$ being selected big enough such that the absolute values of $\widetilde U_{j,m}^+(\bm x,\omega)$ and $\widetilde U_{j+1,m}^-(\bm x,\omega)$ are below machine precision when $\omega\notin [\omega_0-W,\omega_0+W]$. Then the inverse Fourier transform of $\widetilde U_{j,m}^+$, as well as the inverse Fourier transform of $\widetilde U_{j+1,m}^-$, can be approximated by
\begin{equation}
\label{eq:truncated-Fourier}
\mathbb{F}^{-1}(\widetilde U_{j,m}^+)(\bm x,t)\approx\frac{1}{2\pi}\int_{\omega_0-W}^{\omega_0+W}\widetilde U_{j,m}^+(\bm x,\omega)e^{-i\omega t}d\omega,
\end{equation}
with high accuracy. To avoid the discussion of low-frequency and long-time simulation issues, see Remarks~\ref{rem:low-freq} and \ref{rem:long-time-stability}, which will be left for future works, we assume that $\omega_0-W>0$. Then the numerical evaluation of \eqref{eq:truncated-Fourier} can be achieved through FFT.

\begin{remark}[Discussions on low-frequency case]
\label{rem:low-freq}
For general cases of incident fields containing information on a wide frequency interval $\omega\in [-\widetilde W,\widetilde W], \widetilde W>0$, the numerical approximation of the frequency-domain solutions and inverse Fourier transform will bring significant challenges. The later case comes from the fact that the two-dimensional frequency-domain solutions of wave scattering problems generally contained integrable $\mathcal{O}(\log\omega)$ singularities \cite{maccamy1965low,werner1986low}. The calculation of the inverse Fourier transform on $\omega\in(-\omega_c,\omega_c)$ with $\omega_c>0$ being some small value of frequency can be resolved utilizing the midified ``Filon-Clenshaw-Curtis'' high-order quadrature rule developed in \cite{anderson2020high}. However, the numerical evaluation of the frequency-domain solutions for layered-medium scattering problems still remains challenging. In particular, the PML-BIE method discussed in the next Section~\ref{sec:4.3} does not provide a uniformly high-accuracy solver due to the fact that the convergence of the PML truncation, in which the thickness of PML layer requires to be inversely proportional to the wave number, will be destroyed. One way to overcome this difficulty is to deform the integral above the real axis~\cite{chew1999waves}, however, will bring an exponential growth factor in the inverse Fourier transform depending on the largest distance of the new integral path to zero. This will be briefly discussed in numerical Example~3. Using the classical layered Green function method \cite{Cai2002} to solve the low-frequency problems might be another alternative choice, however, needs efficient algorithms for the computing of expensive Sommerfeld integrals~\cite{cai00}. 
\end{remark}

\begin{remark}[Discussions on long-time simulation]
\label{rem:long-time-stability}

Following idea of applying ``time windowing and recentering'' approach to achieve long-time simulation proposed in~\cite{anderson2020high}, the multiple scattering Algorithm \ref{alg:msN} can be further extended to a modified version for the purpose of long-time simulation for the considered multi-layered medium wave-equation problem. We use the notations in~\cite{pan2025multi}. For a given large final time $T$, let $\mathcal{P}=\{\sqcap_q(t)\ |\ q\in\mathcal{Q}\},
\mathcal{Q}=\{1,\cdots,Q\}$ be a smooth partition of unity such that $\sum_{q\in\mathcal{Q}}\sqcap_q(t)\ = 1$ for $t\in[0,T]$ where for a certain sequence $s_q$ ($q\in\mathcal{Q}$), each $\sqcap_q(t)$ is a non-negative, smooth, locally supported windowing function of $t$ satisfying $\mathrm{supp}\,\sqcap_q\subset[s_q-H,s_q+H]$. Utilizing the partition of unity $\mathcal{P}$ leads to an expression, for any smooth long-time
signal $g(t)$, $t\in[0,T]$, as
\begin{equation}\label{eq:windowing}
  g(t)=\sum_{q\in\mathcal{Q}} g_q(t), \quad g_q(t)=g(t)\sqcap_q(t),
\end{equation}
where $\mathrm{supp}\, g_q\subset[s_q-H,s_q+H]$. Then the
corresponding Fourier transform $G=\mathbb F(g)$ can be expressed by
\begin{equation}
G(\omega)=\sum_{q\in\mathcal{Q}} G_q(\omega),\quad
G_q(\omega)=\int_{0}^{T} g_q(t)e^{i\omega t}dt= e^{i\omega s_q}G_{q,slow}(\omega),
\end{equation}
where $G_{q,slow}(\omega)= \mathbb{F}_{q,slow}(g)(\omega)$ with
\begin{equation}
\label{Fourierk}
\mathbb{F}_{q,slow}(g)(\omega):=\int_{-H}^H g(t+s_q)\sqcap_q(t)e^{i\omega t}dt,
\end{equation}
denoting the $s_q$-centered slow Fourier-transform operator. Note that each $G_{q,slow}$ now is a slowly-oscillatory function of $\omega$. For the two-layered medium problem (\ref{eq:two-layer1}) and the frequency-domain problem (\ref{eq:freq-2-layer}), we denote $\widetilde F_{j,m,q,slow}(\bm x,\omega)= \mathbb{F}_{q,slow}(\widetilde f_{j,m})(\bm x,\omega)$ and $\widetilde G_{j,m,q,slow}(\bm x,\omega)= \mathbb{F}_{q,slow}(\widetilde g_{j,m})(\bm x,\omega)$. Now let $\widetilde{U}_{j,m,q,slow}^+, \widetilde{U}_{j+1,m,q,slow}^-$ be the solutions to the frequency-domain problem (\ref{eq:freq-2-layer}) with boundary data $\widetilde F_{j,m,q,slow},\widetilde G_{j,m,q,slow}$. Then the solutions $\widetilde{u}_{j,m}^+, \widetilde{u}_{j+1,m}^-$ to the time-domain two-layered medium problem (\ref{eq:two-layer1}) can be expressed as 
\begin{equation}
\label{eq:long-sol1}
\widetilde{u}_{j,m}^+(\bm x,t)=\mathbb{F}^{-1}\left( \sum_{q\in\mathcal{Q}} e^{i\omega s_q}\widetilde{U}_{j,m,q,slow}^+\right)(\bm x,t) =\sum_{q\in\mathcal{Q}} \mathbb{F}^{-1}(\widetilde{U}_{j,m,q,slow}^+)(\bm x,t-s_q),
\end{equation}
and, analogously,
\begin{equation}
\label{eq:long-sol2}
\widetilde{u}_{j+1,m}^-(\bm x,t)=\sum_{q\in\mathcal{Q}} \mathbb{F}^{-1}(\widetilde{U}_{j+1,m,q,slow}^-)(\bm x,t-s_q).
\end{equation}
It should be noticed that even for the specified Gauss pulse whose vast majority of information is concentrated on interval $[\omega_0-W,\omega_0+W], \omega_0-W>0$, the frequency band of the slowly-oscillatory functions $\widetilde F_{j,m,q,slow}, \widetilde G_{j,m,q,slow}$ will be shifted and enlarged due to the windowing function. This implies that
the calculation of the inverse Fourier transform in (\ref{eq:long-sol1})-(\ref{eq:long-sol2}) requires to be imposed on a wider frequency range $\omega\in [-\widetilde W,\widetilde W], \widetilde W>0$ to achieve high accuracy and thus, an efficient numerical solver for the low-frequency problems will also be necessary. Both the low-frequency case and long-time simulation will be left for future work.
\end{remark}

\subsection{PML-BIE method for frequency-domain problems}
\label{sec:4.3}

Finally, we briefly discuss the boundary integral equation method for solving the problem (\ref{eq:freq-2-layer}) with $\omega>0$ where the main difficulty comes from the infinite interface. Here we utilized the so-called PML-BIE method proposed in~\cite{Lu2018Perfectly} (for highly accurate numerical discretization of the PML-transformed boundary integral operators, we refer to~\cite{Lu2023HighlyPML}). The exponential convergence of the PML truncation for the two-layered medium problem has been proved in~\cite{chen2010}. In fact, the PML acts as an artificial absorbing layer that damps outgoing waves without causing reflections, which is based on a complex coordinate stretching defined as follows:
\begin{equation*}
\widetilde{x}_l=x_l+i\int_{0}^{x_l}\sigma_l(s)ds,\quad l=1,2,
\end{equation*}
where $\sigma_l(s)$ is a positive function. Here, we use the PML transformation in~\cite{Lu2018Perfectly} and the function $\sigma_l(s)$ is given by
\begin{equation*}
    \sigma_l(s)=\begin{cases}
        0,\quad                           & -a_l\leq s\leq a_l,    \\
        \frac{2Sf_1^P(s)}{f_1^P(s)+f_2^P(s)},\quad & a_l\leq s\leq a_l+T_l, \\
        S,\quad                           & s> a_l+T_l,            \\
        \sigma_l(-s),\quad                & s<-a_l,
    \end{cases}
\end{equation*}
where $S>0$ is a constant, $P$ is a positive integer, $f_1(s)=(\tfrac{1}{2}-\tfrac{1}{P})\tilde{s}^3+\tfrac{\tilde{s}}{P}+\frac{1}{2},f_2(s)=1-f_1(s) ,\tilde{s}=\frac{s-(a_l+T_l)}{T_l}$, $T_l$ is the thickness of PML layer, $2a_l$ is the size of Physical domain.


\begin{figure}
    \centering
    \includegraphics[width=0.3\linewidth]{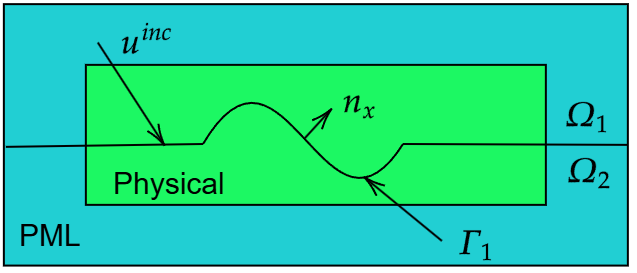}
    \caption{Physical domain(green) surrounded by PML(blue) for two-layered media}
    \label{fig:PML}
\end{figure}

The PML-transformed free-space fundamental solution, denoted by $\widetilde{G}_\kappa(\bm x,\bm y)$, is obtained by applying the complex coordinate stretching to the arguments of the free-space fundamental solution $G_\kappa(\bm x,\bm y)=\tfrac{i}{4}H_0^{(1)}(\kappa|\bm x-\bm y|)$~\cite{Lu2018Perfectly}, i.e.,
\begin{equation*}
    \label{eq:pml-green-function}
    \widetilde{G}_{\kappa}(\bm x, \bm y) := G_{\kappa}(\widetilde{\bm x}, \widetilde{\bm y}) = \frac{i}{4} H_0^{(1)}(\kappa \rho(\widetilde{\bm x}, \widetilde{\bm y})),\quad \bm x, \bm y \in \mathbb{R}^2,
\end{equation*}
where $\rho$ denotes the complex-valued distance function given by
\begin{equation*}
    \label{eq:complex-distance}
    \rho(\widetilde{\bm x}, \widetilde{\bm y}) = \sqrt{(\widetilde{x}_1 - \widetilde{y}_1)^2 + (\widetilde{x}_2 - \widetilde{y}_2)^2},
\end{equation*}
in which the branch of the complex square root $\sqrt{z}$ is chosen such that $\operatorname{Re}(\sqrt{z}) \ge 0$ for any $z \in \mathbb{C}$.

As shown in Figure~\ref{fig:PML}, utilizing the PML truncation and imposing the vanishing conditions of both the Dirichlet and Neumann data on the exterior boundary of PML region, the PML-BIE method seeks approximate solutions in the truncated regions $\widetilde\Omega_j^\mathrm{pml}:=\{\bm x\in\widetilde\Omega_{j}: |x_l|<a_l+T_l, l=1,2\}$ and
$\widetilde\Omega_{j+1}^\mathrm{pml}:=\{\bm x\in\widetilde\Omega_{j+1}: |x_l|<a_l+T_l, l=1,2\}$ for the solutions $\widetilde{\widetilde{U}}_{j,m}^+(\bm x)= \widetilde{U}_{j,m}^+(\widetilde{\bm x})$ and $\widetilde{\widetilde{U}}_{j+1,m}^-(\bm x)= \widetilde{U}_{j+1,m}^-(\widetilde{\bm x})$ through the representations (see for example \cite{Lu2018Perfectly,Lu2023HighlyPML})
\begin{equation}
    \begin{cases}
\widetilde{\widetilde{U}}_{j,m}^+(\bm x,\omega)    =\left(D_{j,\kappa_j} \widetilde{\widetilde{U}}_{j,m}^+\right)(\bm x,\omega)-\left(S_{j,\kappa_j}\frac{\partial \widetilde{\widetilde{U}}_{j,m}^+}{\partial\bm n}\right)(\bm x,\omega),\quad        & \bm x\in\widetilde\Omega_j^\mathrm{pml},    \\
\widetilde{\widetilde{U}}_{j+1,m}^-(\bm x,\omega)    =-\left(D_{j,\kappa_{j+1}} \widetilde{\widetilde{U}}_{j+1,m}^-\right)(\bm x,\omega)+\left(S_{j,\kappa_{j+1}}\frac{\partial \widetilde{\widetilde{U}}_{j+1,m}^-}{\partial\bm n}\right)(\bm x,\omega),\quad & \bm x\in\widetilde\Omega_{j+1}^\mathrm{pml}.
\end{cases}
\label{eq:green-representation}
\end{equation}
where $S_{j,\kappa},D_{j,\kappa}$ are the single-layer and double-layer potentials operators defined on the PML-truncated interface $\Gamma_{j}^\mathrm{pml}:=\{\bm x\in\Gamma_j: |x_1|\le a_1+T_1\}$, given by
by
\[
(S_{j,\kappa}q)(\bm x) = \int_{\Gamma_{j}^\mathrm{pml}} \widetilde{G}_{\kappa}(\bm x, \bm y) q(\bm y) \, ds_{\bm y},
\]
\[
(D_{j,\kappa}q)(\bm x) = \int_{\Gamma_{j}^\mathrm{pml}} \frac{\partial}{\widetilde{\partial} \bm n_{\bm y}} \widetilde{G}_{\kappa}(\bm x, \bm y) q(\bm y) \, ds_{\bm y}.
\]
Here, $\frac{\partial}{\widetilde{\partial} \bm n} u=\mathbb{A} \bm n\cdot \nabla u
$ with $\mathbb{A}=\mathrm{diag}\{a_1^{-1}(x_1)a_2(x_2),a_1(x_1)a_2^{-1}(x_2)\}$ and $a_i(x_i)=\widetilde{x_i}'(x_i)$. Then taking the limit $\bm x\rightarrow \Gamma_{j}^\mathrm{pml}$ in (\ref{eq:green-representation}) and applying the PML transformed boundary conditions 
\begin{equation*}
\begin{cases}
\widetilde{\widetilde{U}}_{j+1,m}^-(\bm x,\omega)-\widetilde{\widetilde{U}}_{j,m}^+(\bm x,\omega)=\widetilde{\widetilde{F}}_{j,m}(\bm x,\omega)(:=\widetilde F_{j,m}(\widetilde{\bm x},\omega)), & \bm x\in\Gamma_{j}^\mathrm{pml},             \\
\rho_{j+1}^{-1}\frac{\partial \widetilde{\widetilde{U}}_{j+1,m}^-(\bm x,\omega)}{\widetilde\partial \bm n}-\rho_{j}^{-1}\frac{\partial \widetilde{\widetilde{U}}_{j,m}^+(\bm x,\omega)}{\widetilde\partial \bm n}=\widetilde{\widetilde{G}}_{j,m}(\bm x,\omega)(:=\widetilde G_{j,m}(\widetilde{\bm x},\omega)), & \bm x\in \Gamma_{j}^\mathrm{pml},
\end{cases}
\end{equation*}
leads to the BIE system, analogous to~\cite{Bruno2016WindowedGF}, 
\begin{equation}
(\mathcal{E}+\mathcal{T})\Phi=\Phi^i,
\label{eq:bie}
\end{equation}
in terms of the boundary integral operators
\[
    (\mathcal{S}_{j,\kappa}q)(\bm x) = \int_{\Gamma_{j}^\mathrm{pml}} \widetilde{G}_\kappa(\bm x, \bm y) q(\bm y) \, ds_{\bm y}, \quad (\mathcal{K}_{j,\kappa}q)(\bm x) = \int_{\Gamma_{j}^\mathrm{pml}} \frac{\partial}{\widetilde{\partial} \bm n_{\bm y}} \widetilde{G}_\kappa(\bm x, \bm y) q(\bm y) \, ds_{\bm y},
\]
\[
    (\mathcal{K}_{j,\kappa}'q)(\bm x) = \int_{\Gamma_{j}^\mathrm{pml}} \frac{\partial}{\widetilde{\partial} \bm n_{\bm x}} \widetilde{G}_\kappa(\bm x, \bm y) q(\bm y) \, ds_{\bm y}, \quad (\mathcal{N}_{j,\kappa}q)(\bm x) = \int_{\Gamma_{j}^\mathrm{pml}} \frac{\partial^2}{\widetilde{\partial} \bm n_{\bm x} \widetilde{\partial} \bm n_{\bm y}} \widetilde{G}_\kappa(\bm x, \bm y) q(\bm y) \, ds_{\bm y},
\]
where
\begin{equation*}
    \mathcal{E}=\begin{pmatrix}
        \mathcal{I} & 0           \\
        0           & \mathcal{I}
    \end{pmatrix},\quad \mathcal{T}=\begin{pmatrix}
        \mathcal{K}_{j,\kappa_{j+1}}-\mathcal{K}_{j,\kappa_j}   & \mathcal{S}_{j,\kappa_j}-\mathcal{S}_{j,\kappa_{j+1}}   \\
        \mathcal{N}_{j,\kappa_{j+1}}-\mathcal{N}_{j,\kappa_{j}} & \mathcal{K}_{j,\kappa_j}'-\mathcal{K}_{j,\kappa_{j+1}}'
    \end{pmatrix},
\end{equation*}
and
\begin{equation*}
\Phi=\begin{pmatrix}
        \widetilde{\widetilde{U}}_{j+1,m}^- \\
        \frac{\partial \widetilde{\widetilde{U}}_{j+1,m}^-}{\widetilde{\partial} \bm n}
    \end{pmatrix},\quad \Phi^i=\begin{pmatrix}
        \frac{\widetilde{\widetilde{F}}_{j,m}}{2}+ \mathcal{S}_{j,\kappa_j}\widetilde{\widetilde{G}}_{j,m}-\mathcal{K}_{j,\kappa_j}\widetilde{\widetilde{F}}_{j,m} \\
        \frac{\widetilde{\widetilde{G}}_{j,m}}{2}+ \mathcal{K}_{j,\kappa_j}'\widetilde{\widetilde{G}}_{j,m}-\mathcal{N}_{j,\kappa_j}\widetilde{\widetilde{F}}_{j,m}
    \end{pmatrix}.
\end{equation*}
with $\mathcal{I}$ denoting the identity operator. 

The numerical solution of the BIE system \eqref{eq:bie} requires careful treatment of its singular integral kernels. The integral operators in the BIE systems can be written in the following general form:
\begin{equation}
\label{eq:general-integral-op-final}
(\mathcal{L}q)(\bm x) = \int_{\Gamma_{j}^\mathrm{pml}} L(\bm x,\bm y) q(\bm y) \, ds(\bm y),\quad \bm x\in \Gamma_{j}^\mathrm{pml},
\end{equation}
where $L(\bm x,\bm y)$ is the kernel and $q$ is the density function. To handle the singularities, the Chebyshev-based rectangular-polar solver proposed in \cite{BRUNO2020109350,Lu2023HighlyPML}, which provides an efficient algorithm to achieve high accuracy, is utilized for the numerical evaluation of (\ref{eq:general-integral-op-final}). In addition, both the multiple scattering iterations and the recovery of the time-domain solution require solving the BIE system for a large number of frequencies. To make this computationally feasible, we employ a kernel expansion method, as detailed in \cite{bruno2024multiple}. This technique accelerates the process by decomposing the kernel into a sum of two parts: a singular but frequency-independent part, and a regular (smooth) but frequency-dependent part, thus allowing us to compute the singular part only once and reuse it for all frequencies.

\section{Numerical experiments}
\label{sec:5}

In this section, several numerical examples will be presented to show the efficiency and accuracy of the proposed FTH-MS solver for the the wave equation problem in a multi-layered medium in both two and three dimensions, including the two-dimensional 2/3/5-layered problems, as well as the three-dimensional 3-layered problem. To quantify the accuracy, we measure the maximum error $\varepsilon_\infty$ in the time-domain solution for $t\in[0, T]$ at a set of specified observation points $\{\bm x_j, j=1,\cdots,N_{\mathrm{obs}}\}$ via
\begin{equation*}
\label{eq:error_definition}
\varepsilon_\infty(\bm x_j) := \max_{t \in [0,T]} \varepsilon(\bm x_j,t)\quad\mbox{with}\quad \varepsilon(\bm x_j,t)=|u_{\text{num}}(\bm x_j, t) - u_{\text{ref}}(\bm x_j, t)|.
\end{equation*}
Here, $u_{\text{num}}$ denotes the numerical solution resulting from the proposed FTH-MS method, and $u_{\text{ref}}$ is a reference solution generated from either the exact solution or the numerical solution with a significantly finer discretization in cases where a closed-form exact solution is not available. All numerical results were obtained from a C++ implementation of the described algorithms. The computations were performed on a standard laptop with an AMD Ryzen 7 8845H processor and 32 GB of RAM.

\begin{figure}[htbp]
    \centering
    \subfloat[]{
        \includegraphics[width=0.2\textwidth]{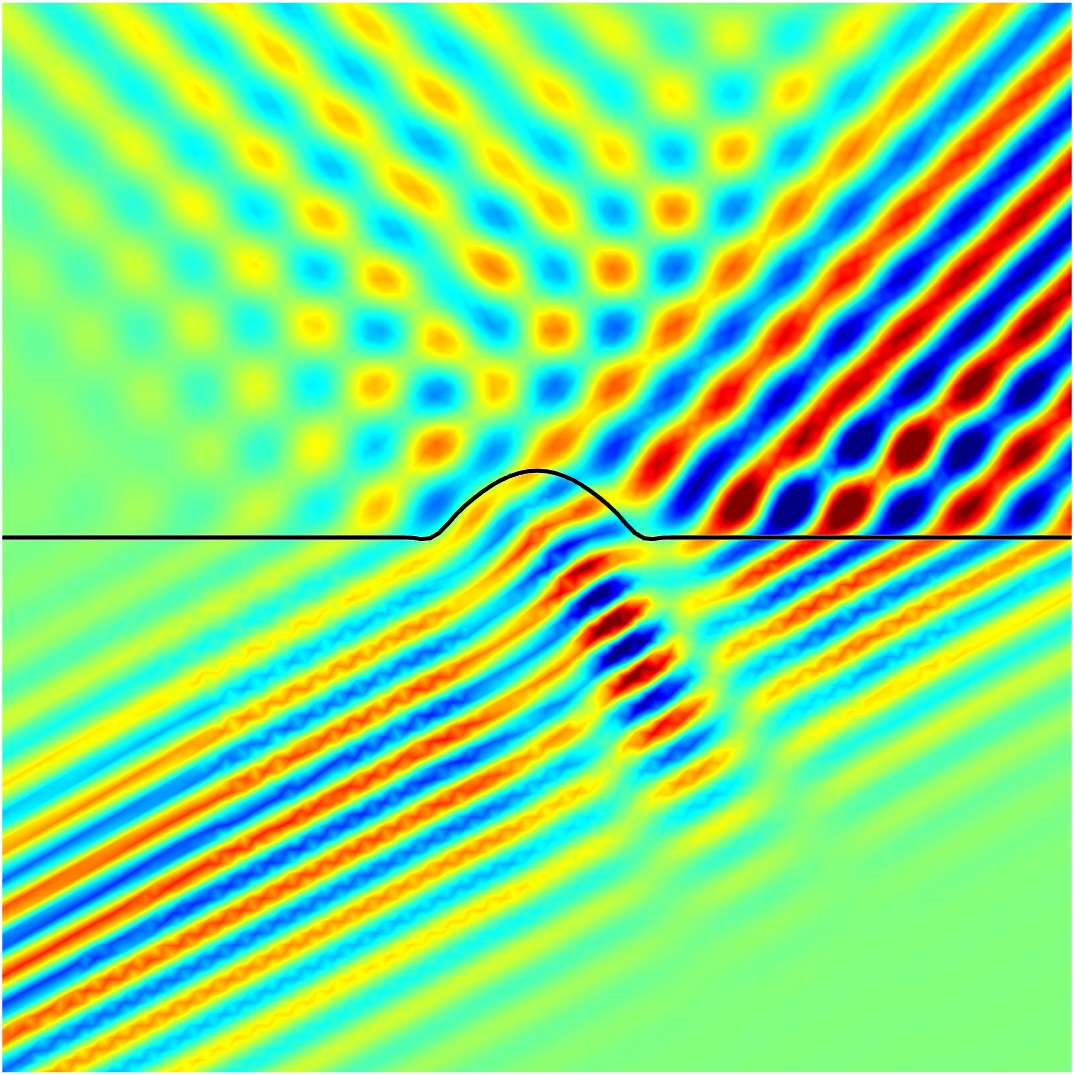}
    }
    \subfloat[]{
        \includegraphics[width=0.25\textwidth]{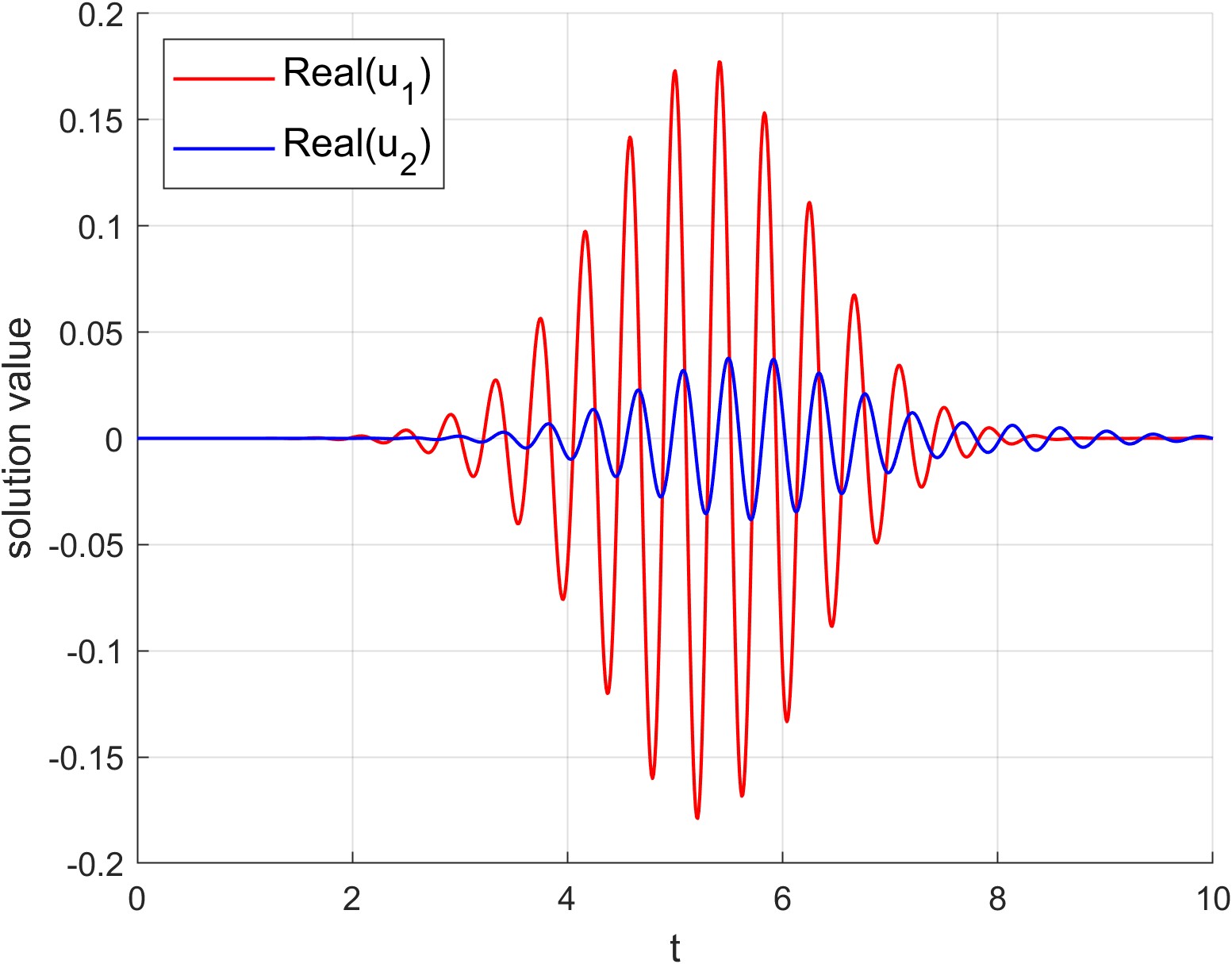}
    }
    \subfloat[]{
        \includegraphics[width=0.25\textwidth]{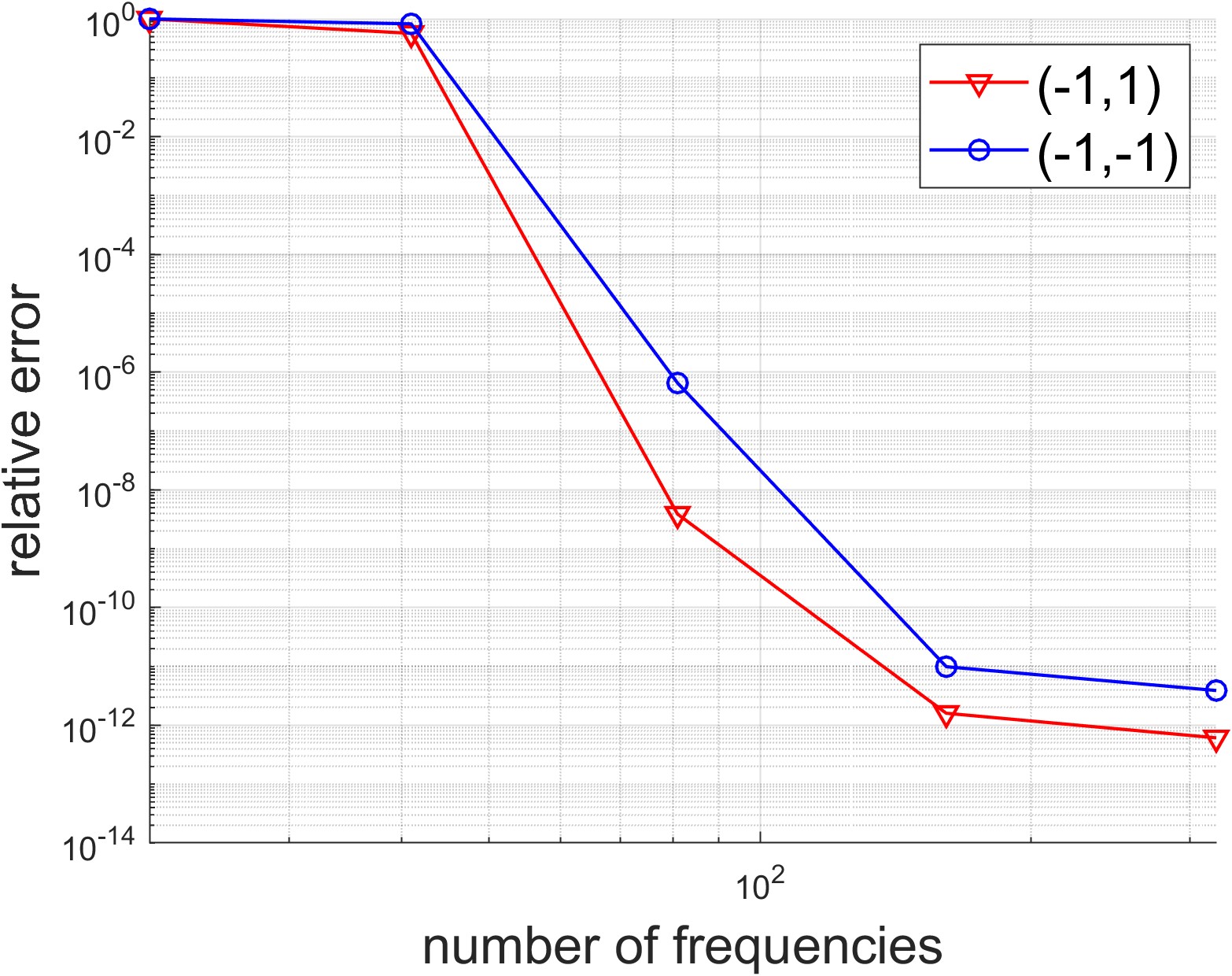}
    }
    \caption{Example 1. Numerical results of the FTH method for the 2-layered medium problem with a plane wave incidence:
    (a) total fields at $t=6$; (b) time trace of the scattered fields at $\bm x=(-1,1)$ and $\bm x=(-1,-1)$; and (c) maximum errors $\varepsilon_\infty$ as a function of the number of frequencies.}
    \label{fig:example1-plane}
\end{figure}

\begin{figure}[htbp]
    \centering
    \subfloat[]{
        \includegraphics[width=0.2\textwidth]{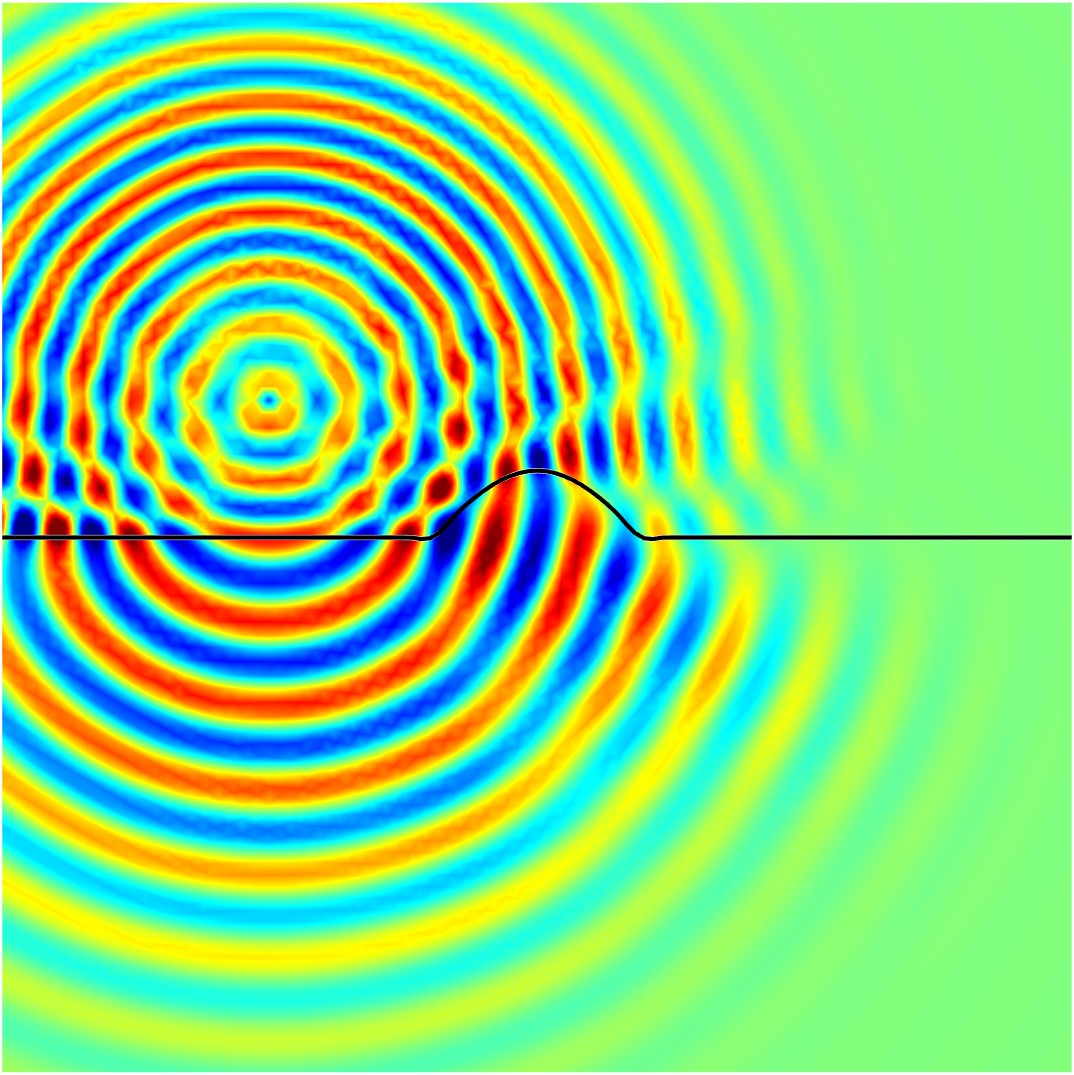}
    }
    \subfloat[]{
        \includegraphics[width=0.25\textwidth]{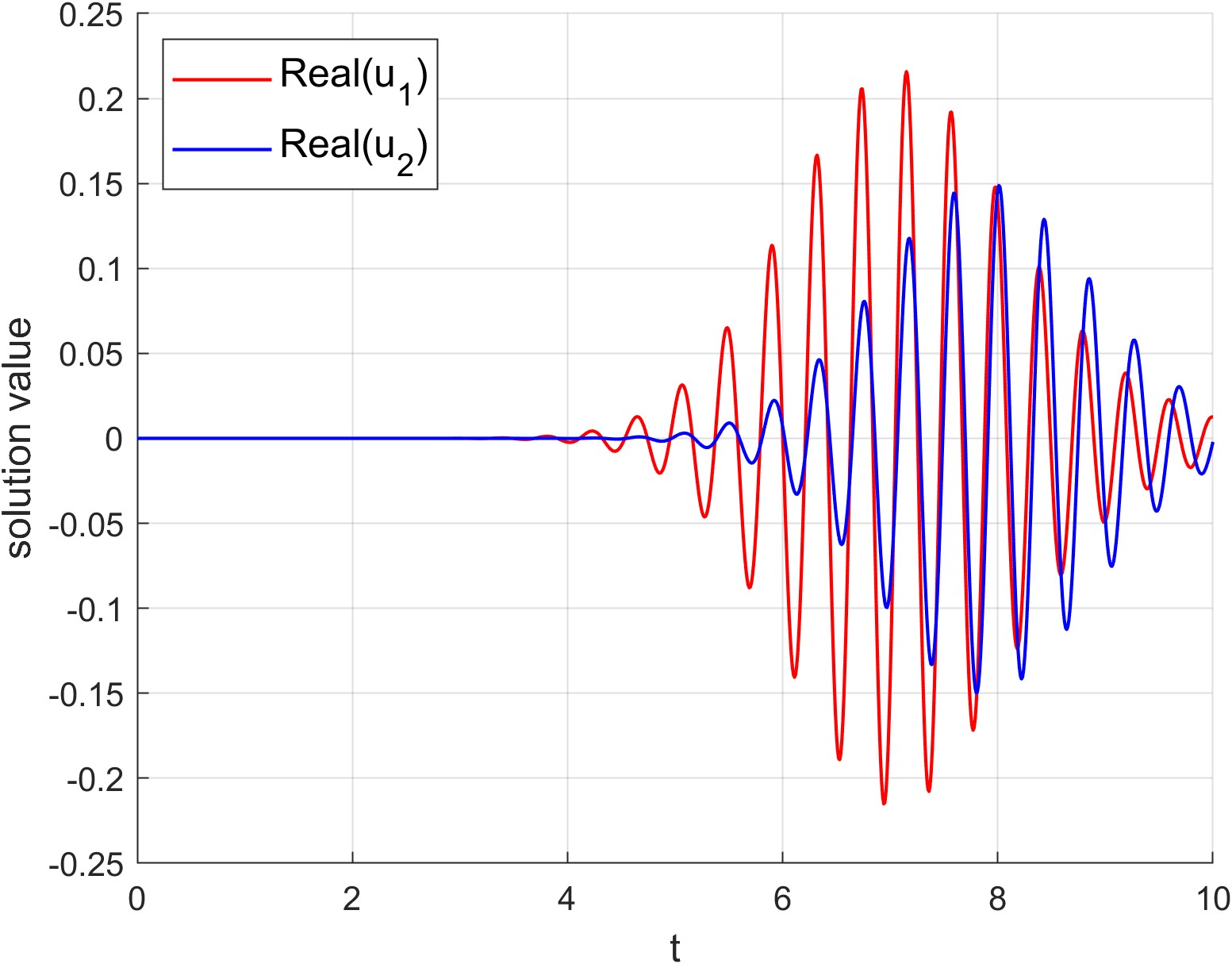}
    }
    \subfloat[]{
        \includegraphics[width=0.25\textwidth]{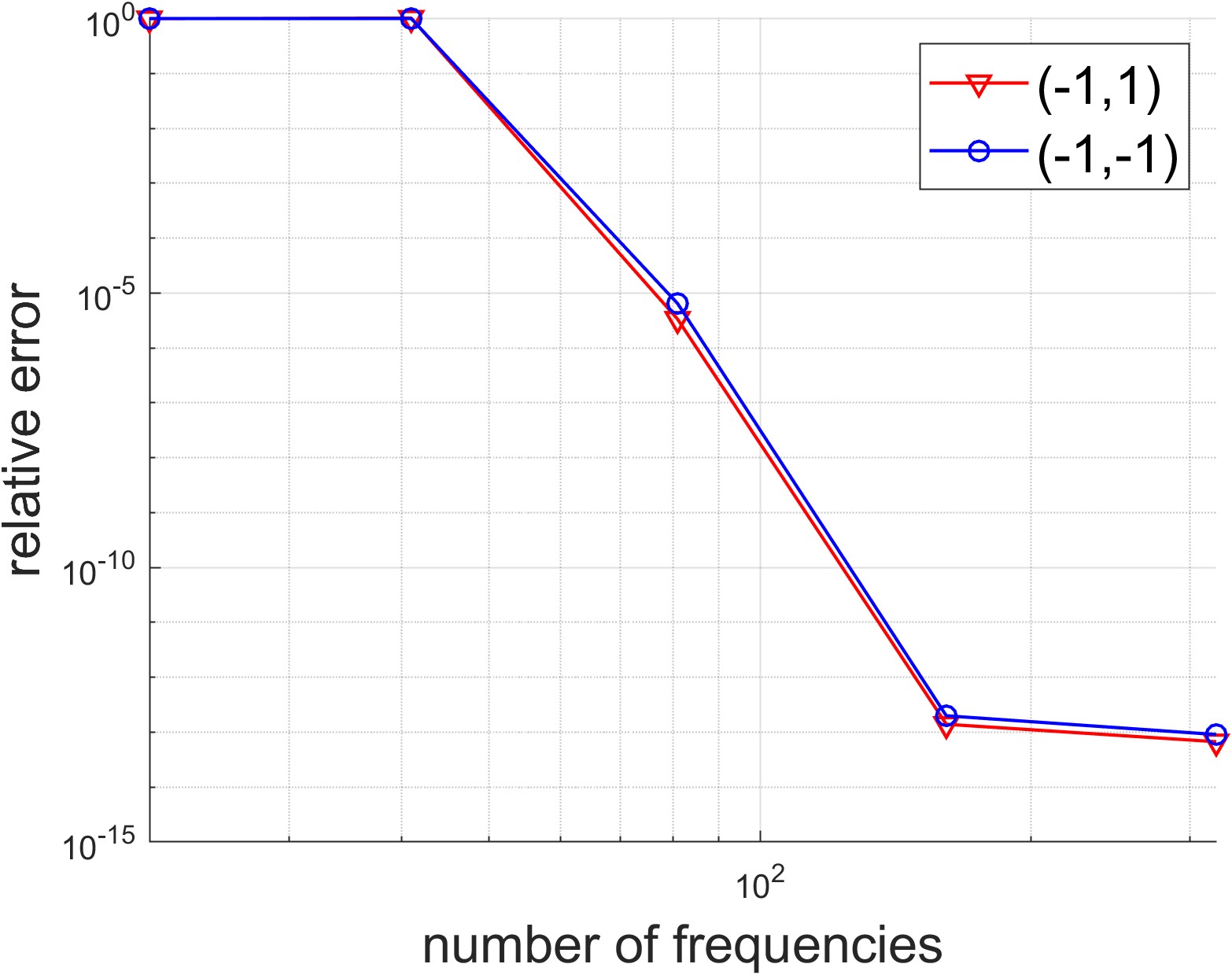}
    }
    \caption{Example 1. Numerical results of the FTH method for the 2-layered medium problem with a point source:
    (a) total fields at $t=6$; (b) time trace of the scattered fields at $\bm x_1=(-1,1)$ and $\bm x_2=(-1,-1)$; and (c) maximum errors $\varepsilon_\infty$ as a function of the number of frequencies.}
    \label{fig:example1-point}
\end{figure}

{\bf Example 1.} (2-layered problems in 2D) First of all, we demonstrate the accuracy of the FTH solver combined with the PML-BIE method for solving the wave equation problems in a 2-layered medium. The interface $\Gamma_1$ is set to be the plane $x_2=0$ locally perturbed by 
$x_2=A\cos(2x_1)\eta(x_1,b_0,b_1)$, where
\begin{equation*}
    \eta(z,b_0,b_1)=
\begin{cases}
1, & |z|\leq b_0,\\
\exp
\left(
\frac{2e^{-1/u}}{u-1}
\right),
& b_0<|z|<b_1,u=\frac{|z|-b_0}{b_1-b_0},\\
0, & |z|\geq b_1,
\end{cases}
\end{equation*}
with $A=0.5,b_0=0.5$and $b_1=1$, see Figure~\ref{fig:example1-plane}(a). We choose the wave speeds in the domains ${\Omega}_1$ and ${\Omega}_2$ as $c_1=1$ and $c_2=1.5$, respectively. Consider the incidence of a Gaussian-pulse plane wave with incident angle $\theta^{\text{inc}}=\pi/4$ or a point source located at $\bm z_0=(-2,1)$ specified in Section~\ref{sec:4.2} and choose $t_0=6$ and $\omega_0 = 15$. We truncate the frequency integration interval to $\omega\in [5, 25]$, outside of which the spectral amplitude is below machine precision. For the plane wave and point source cases, we denote by $(u_1+u_\mathrm{plane}^\mathrm{inc},u_2)$ and $(u_1+u_\mathrm{point}^\mathrm{inc},u_2)$ the pairs of total fields, respectively. Figures~\ref{fig:example1-plane} and \ref{fig:example1-point} display the numerical solutions of the 2-layered wave equation problems (total fields at $t=6$ and time trace of the scattered fields at $\bm x_1=(-1,1)$ and $\bm x_2=(-1,-1)$ and convergence of the maximum errors $\varepsilon_\infty$ at $\bm x_1$ and $\bm x_2$ for $T=10$ with respect to the number of equi-spaced frequencies set for the evaluation of Fourier transform, which clearly verify the fast convergence and high accuracy of the FTH and PML-BIE methods for solving the 2-layered wave equation problems.

\begin{figure}[htbp]
    \centering
    \subfloat[]{
        \includegraphics[width=0.267\linewidth]{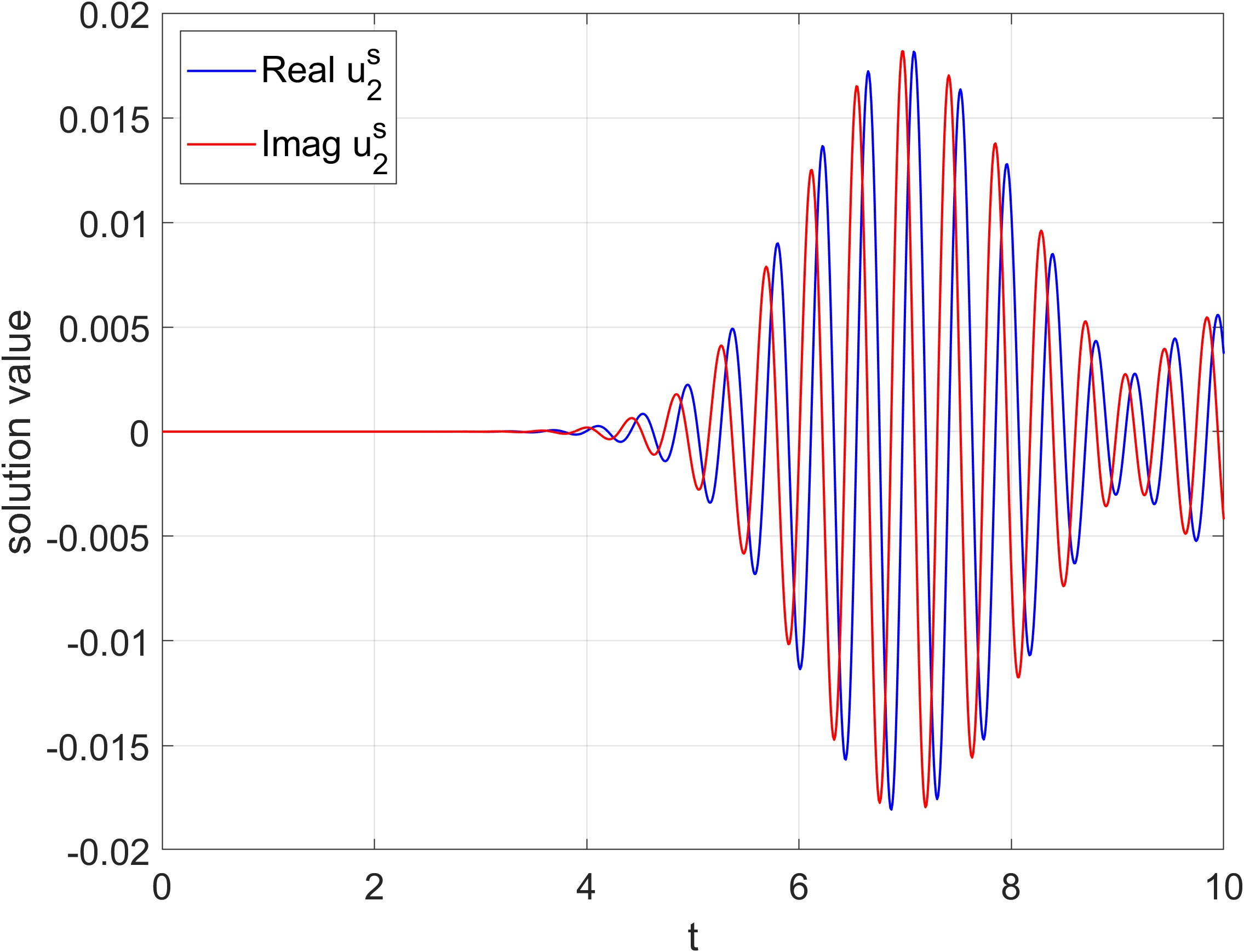}
    }
    \subfloat[]{
        \includegraphics[width=0.267\linewidth]{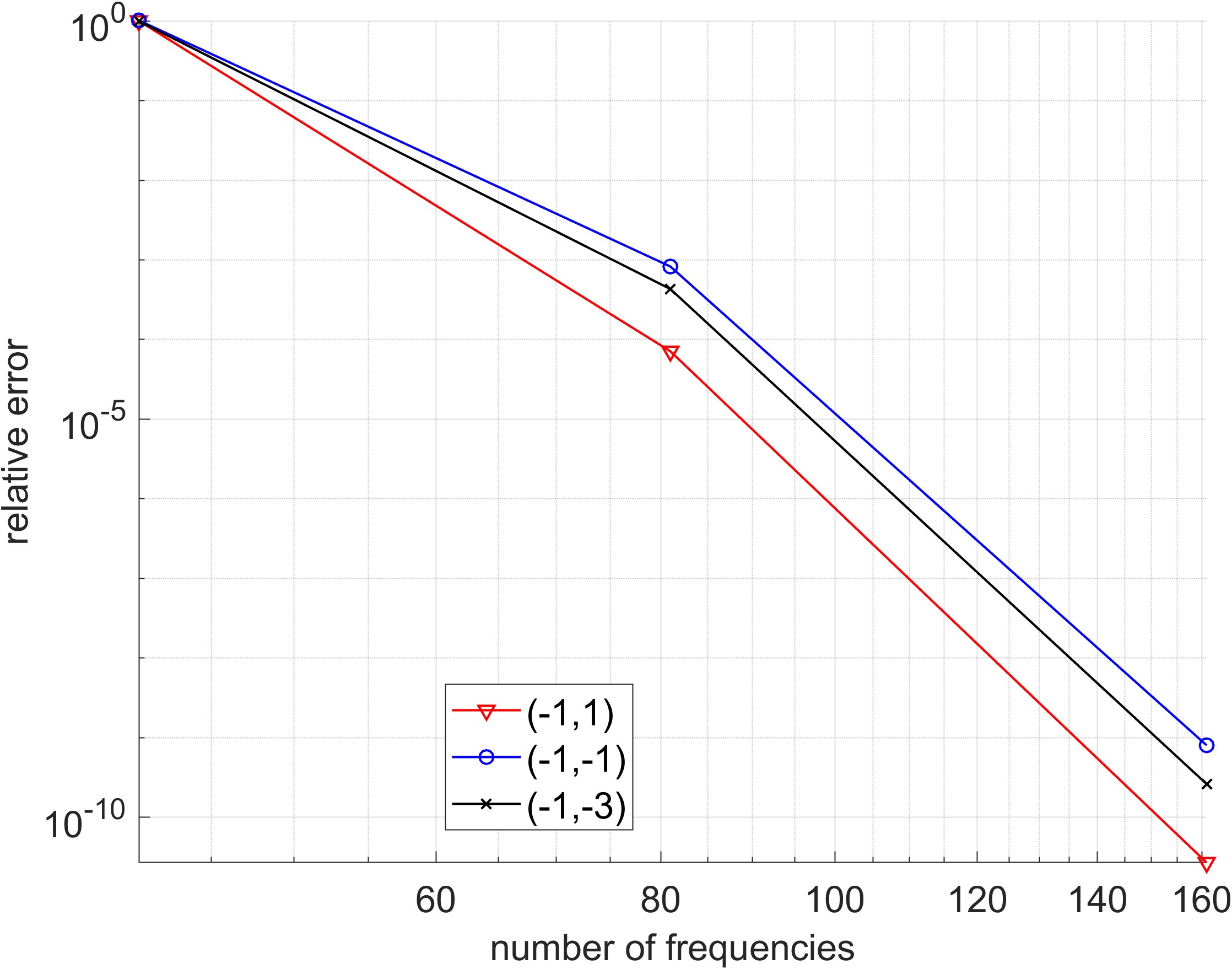}
    }
    \subfloat[]{
        \includegraphics[width=0.267\linewidth]{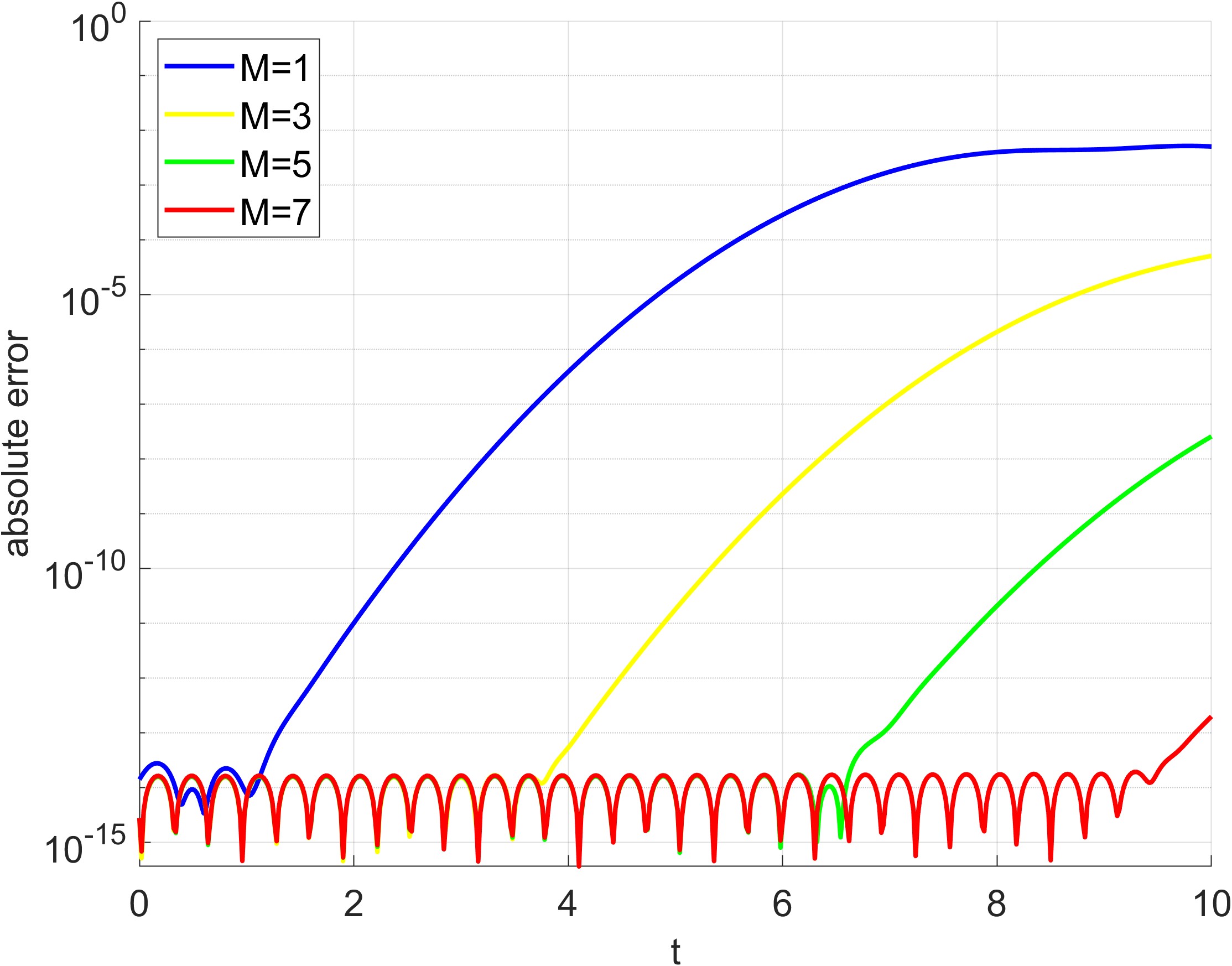}
    }
    \caption{Example 2. Numerical results of the FTH-MS method for the 3-layered medium problem with a point source: (a) time trace of the scattered field $u_2$ at $\bm x=(-1,-1)$; (b) maximum errors $\varepsilon_\infty$ as a function of the number of frequencies; and (c) time trace of the numerical errors $\varepsilon(\bm x,t)$ at $\bm x=(-1,-1)$ for different values of $M$.}
    \label{fig:Example2-3layer-error}
\end{figure}

\begin{figure}[htbp]
    \centering
    \includegraphics[width=0.2\linewidth]{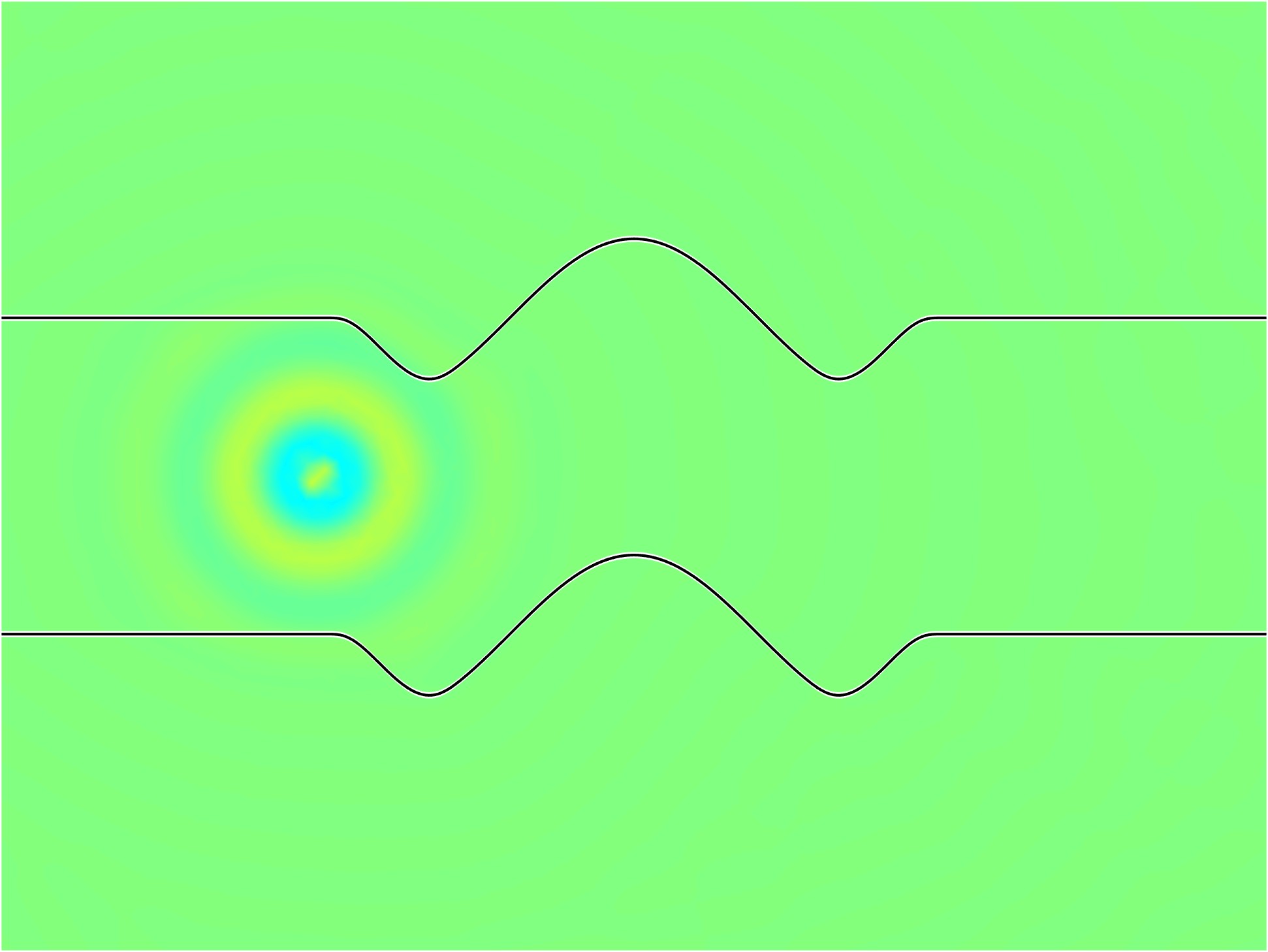}
    \includegraphics[width=0.2\linewidth]{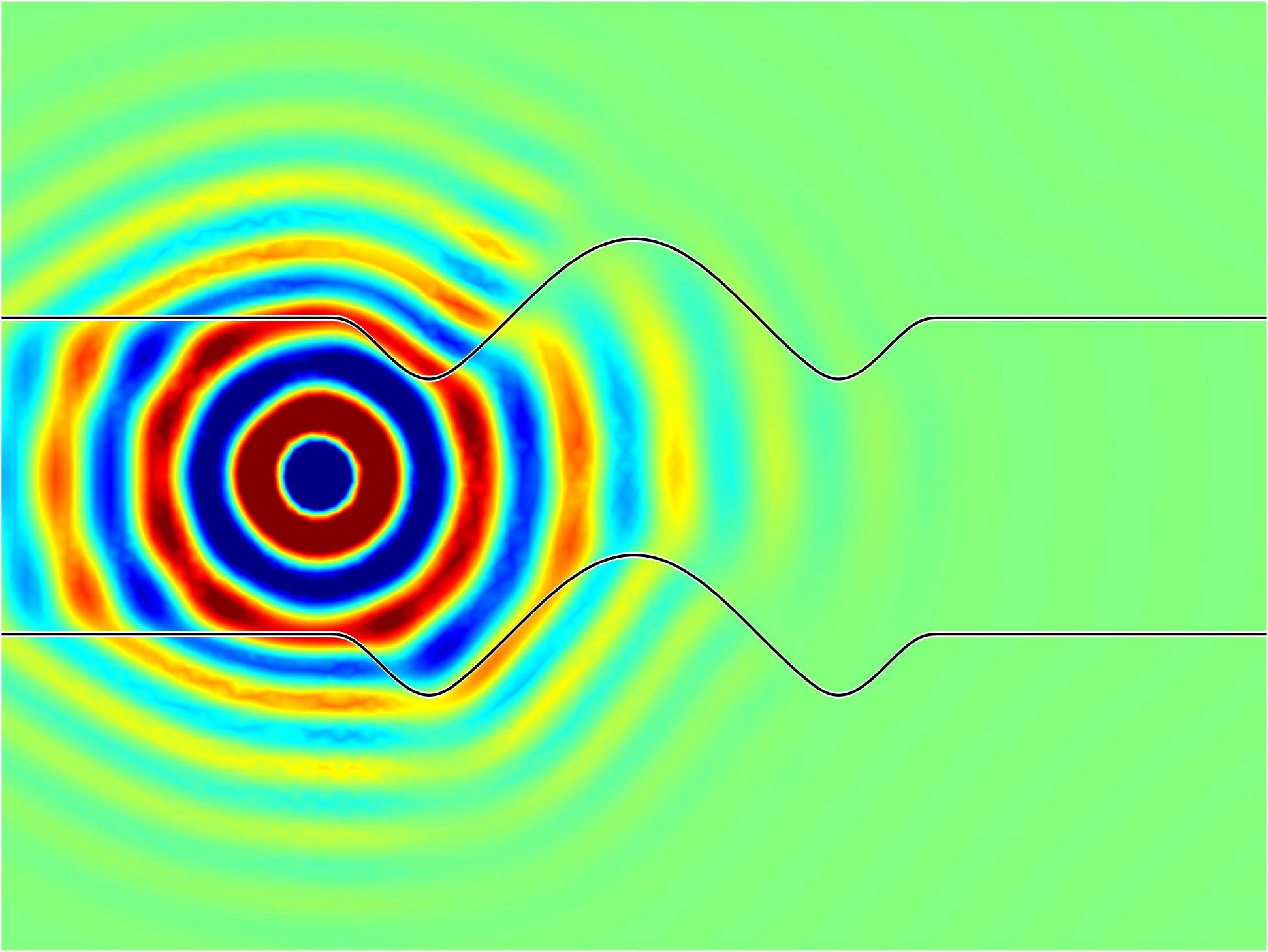}
    \includegraphics[width=0.2\linewidth]{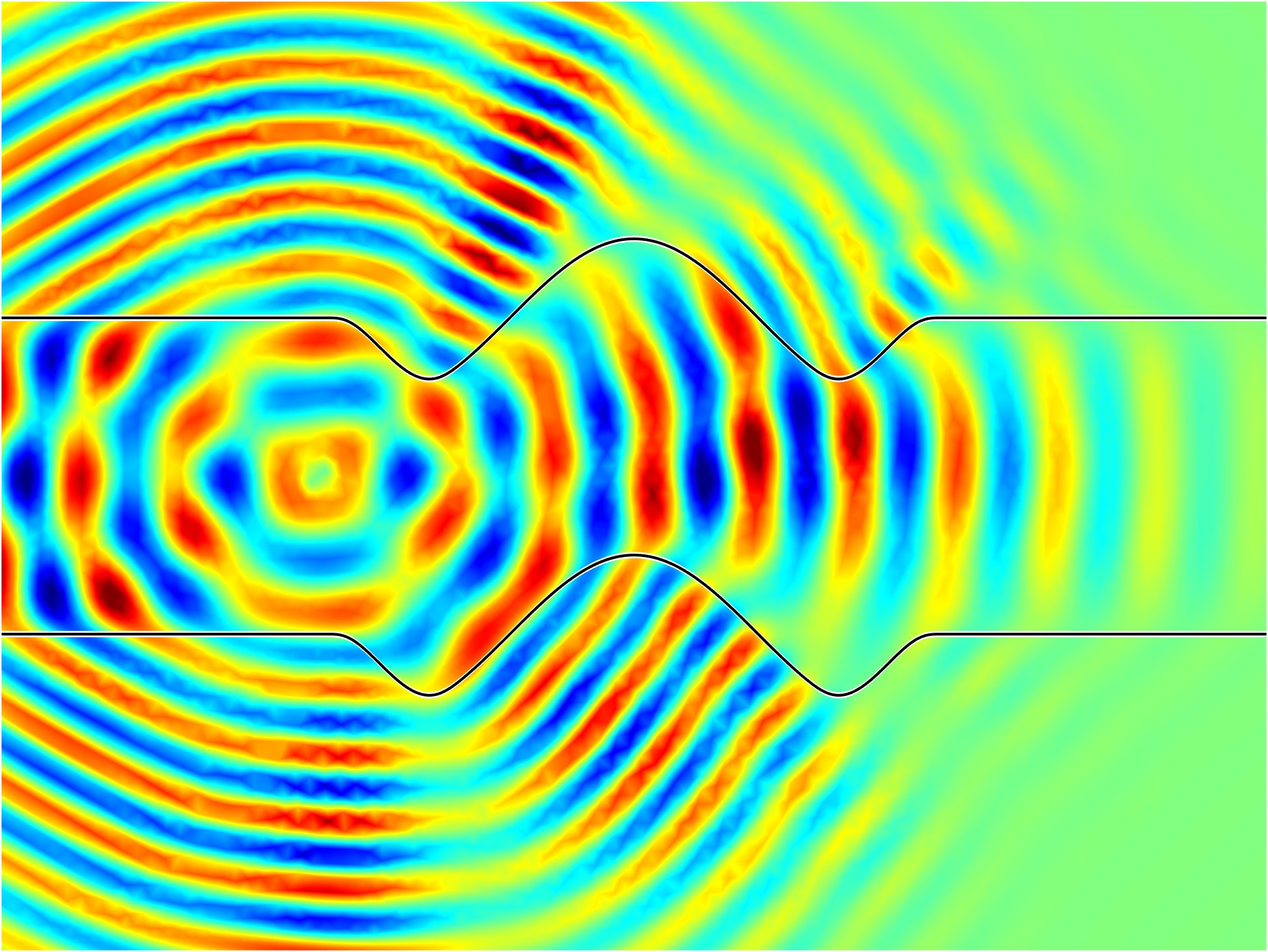}
    \includegraphics[width=0.2\linewidth]{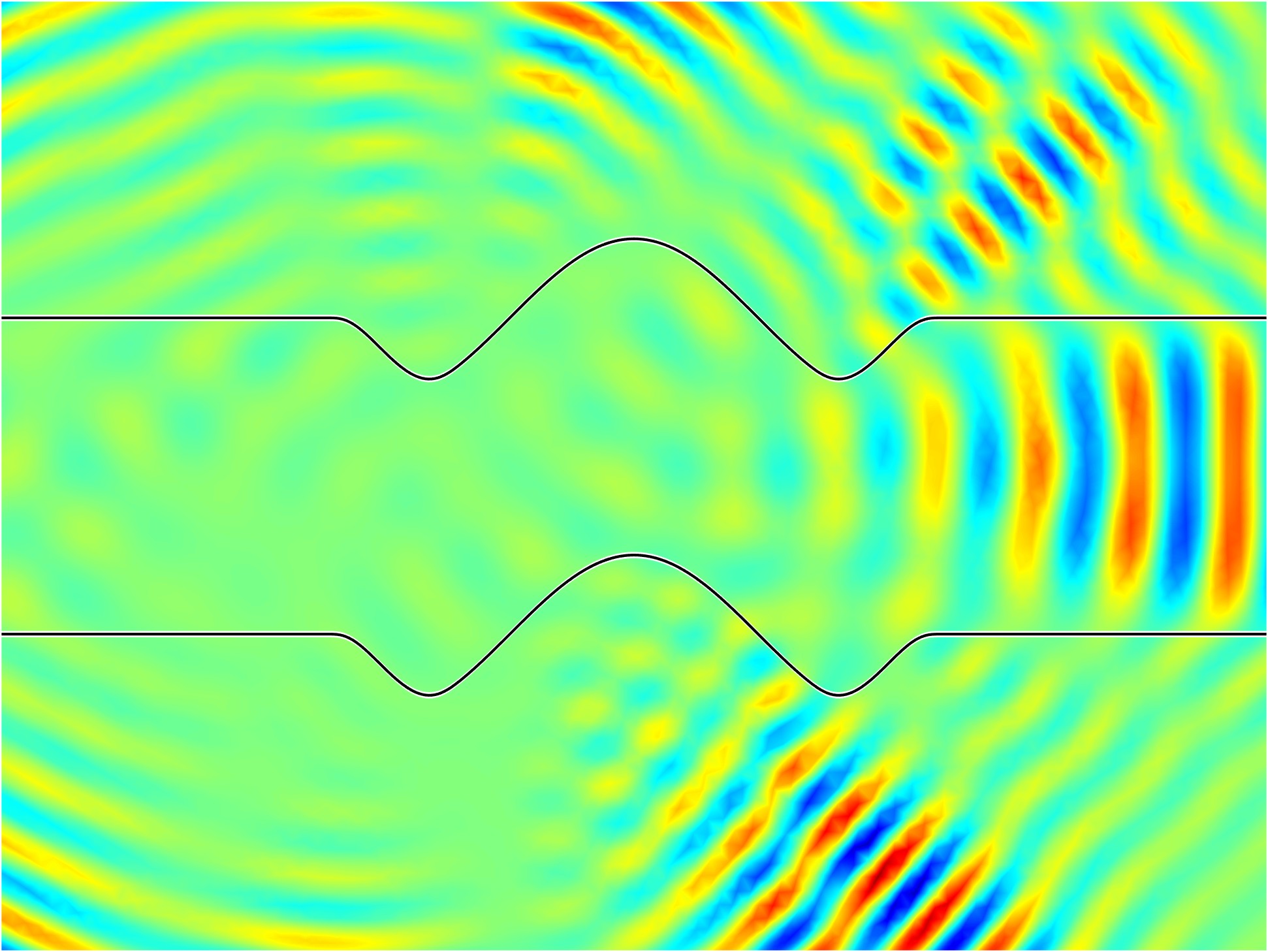}
    \caption{Example 2. Real parts of the total fields at $t=4,6,8,10$ (from left to right) resulted from the FTH-MS method for the scattering of a point source in a 3-layered medium.}
    \label{fig:Example2-3layer-sol}
\end{figure}

\begin{figure}[htbp]
    \centering
    \subfloat[]{
        \includegraphics[width=0.3\linewidth]{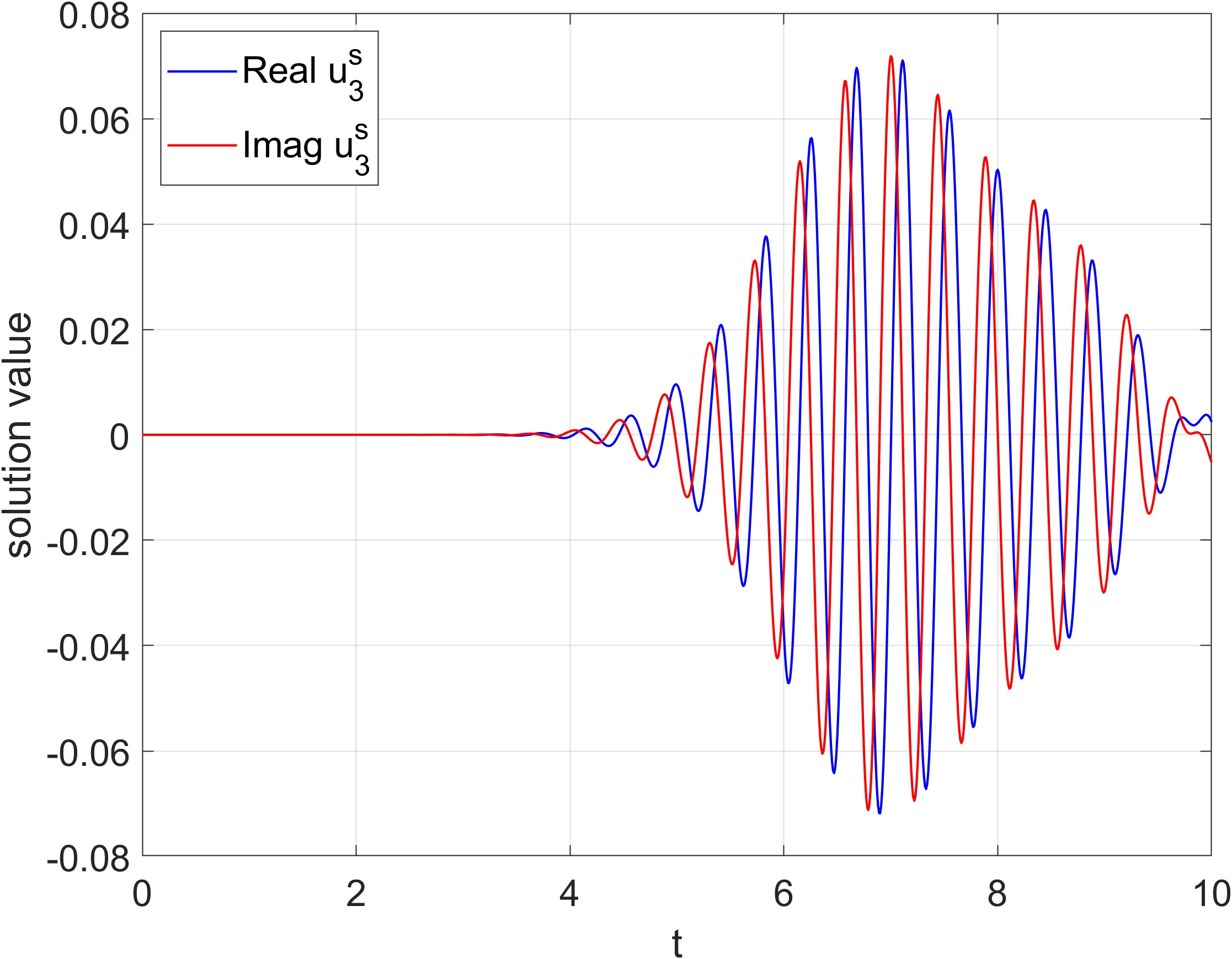}
    }
    \subfloat[]{
        \includegraphics[width=0.3\linewidth]{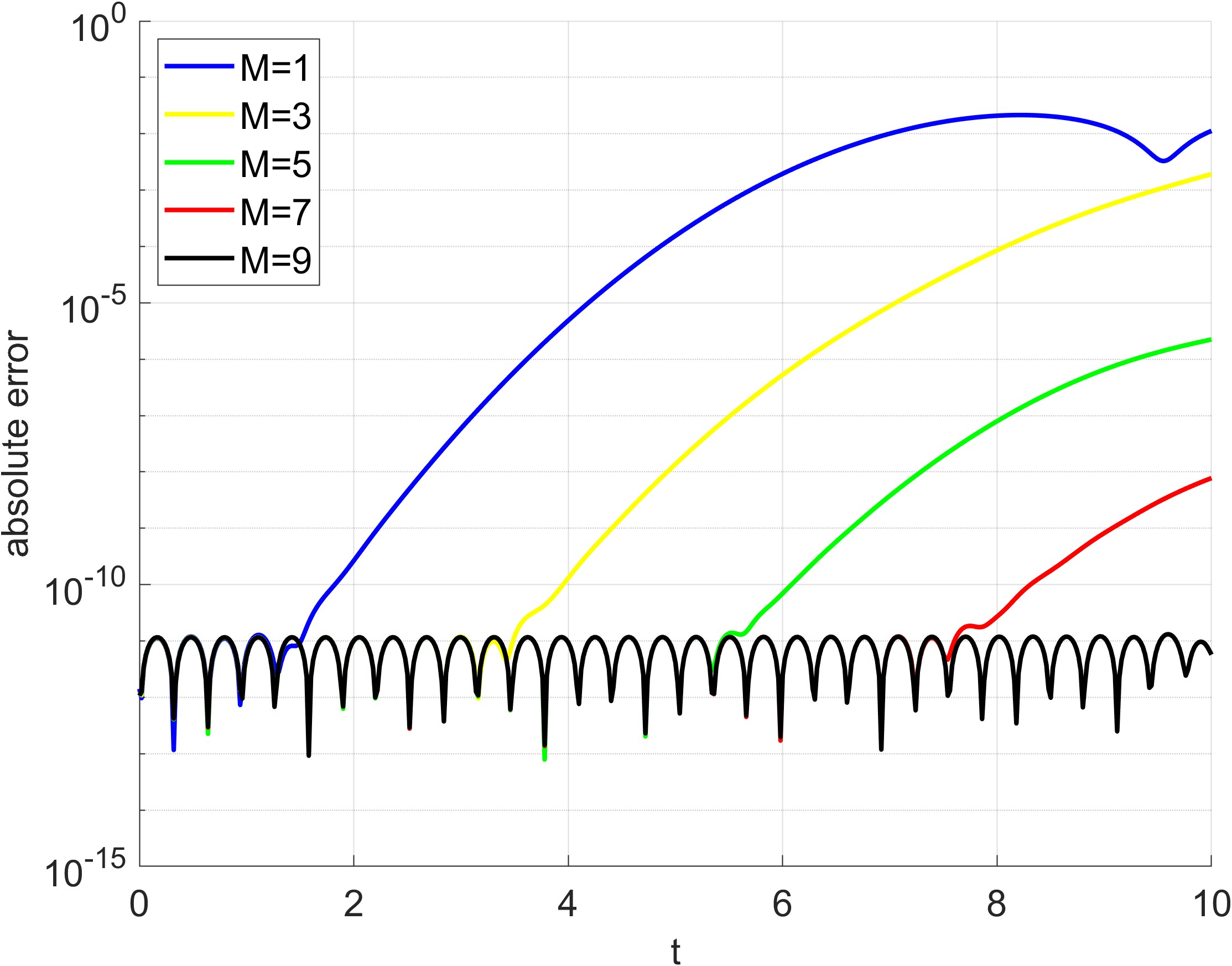}
    }
    \caption{Example 2. Numerical results of the FTH-MS method for the 5-layered medium problem with a point source: (a) time trace of the scattered field $u_3$ at $\bm x=(-1,-3)$; (b) time trace of the numerical errors $\varepsilon(\bm x,t)$ at $\bm x=(-1,-3)$ for different values of $M$.}
    \label{fig:Example2-5layer-error}
\end{figure}

\begin{figure}[htbp]
    \centering
    \includegraphics[width=0.2\linewidth]{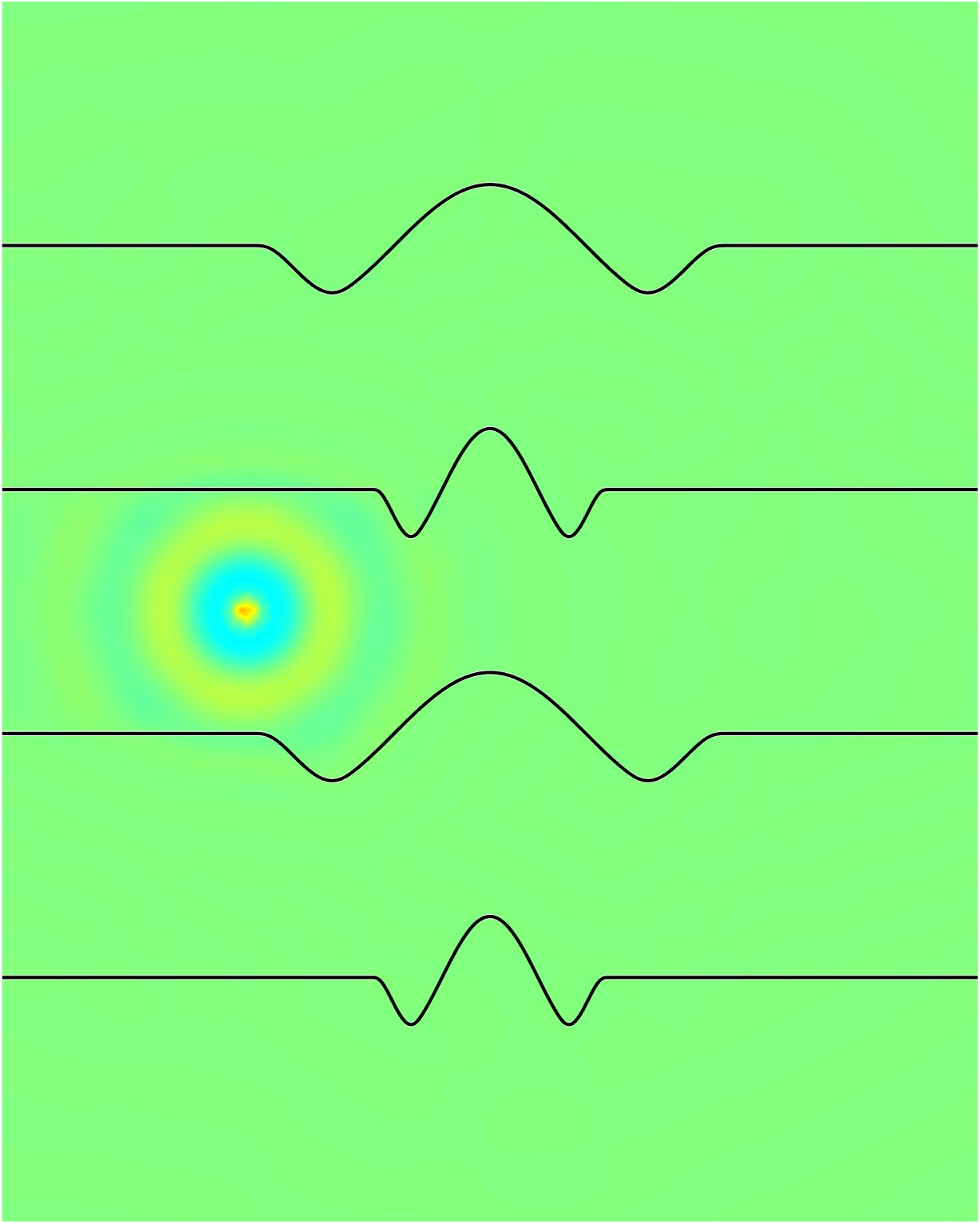}
    \includegraphics[width=0.2\linewidth]{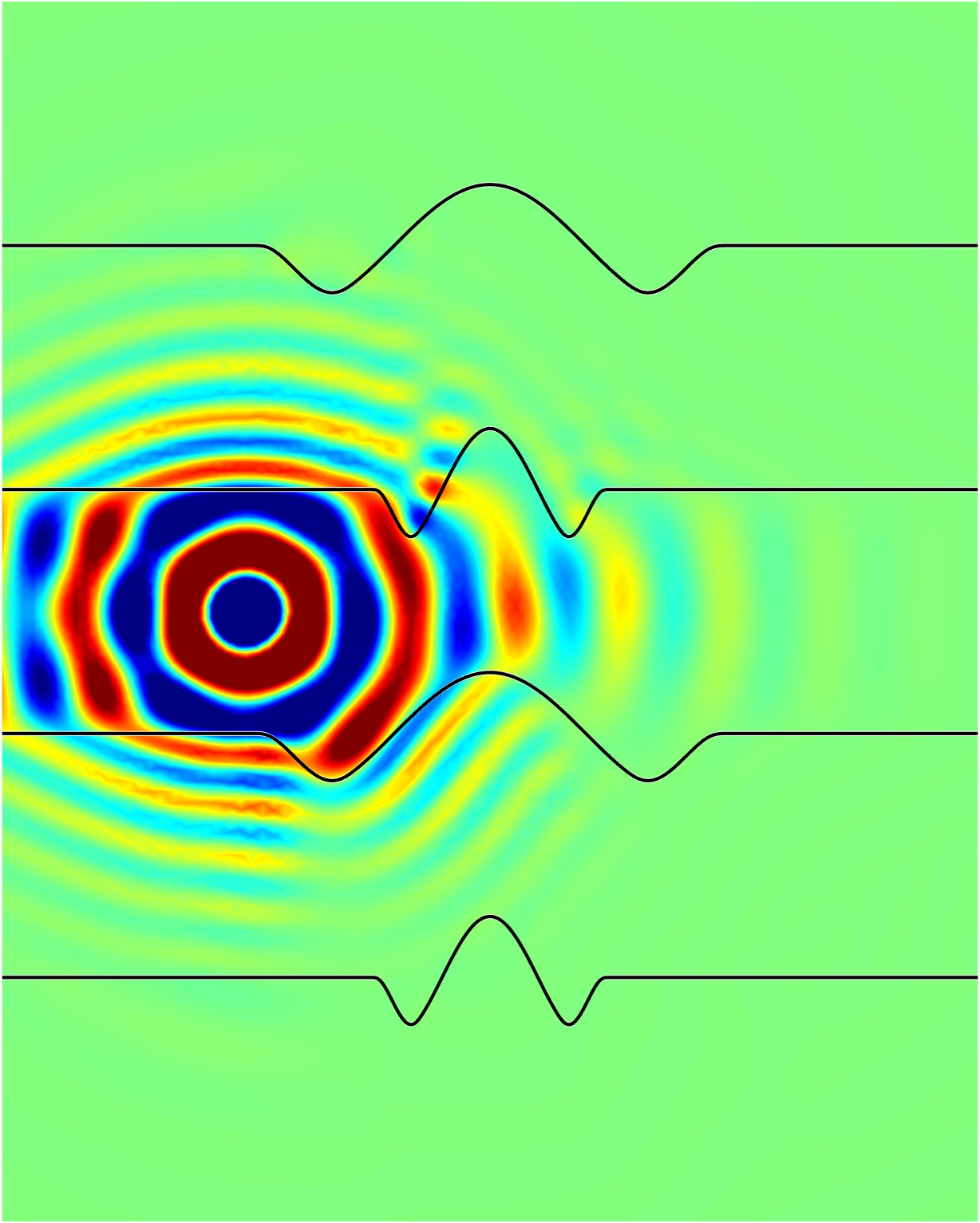}
    \includegraphics[width=0.2\linewidth]{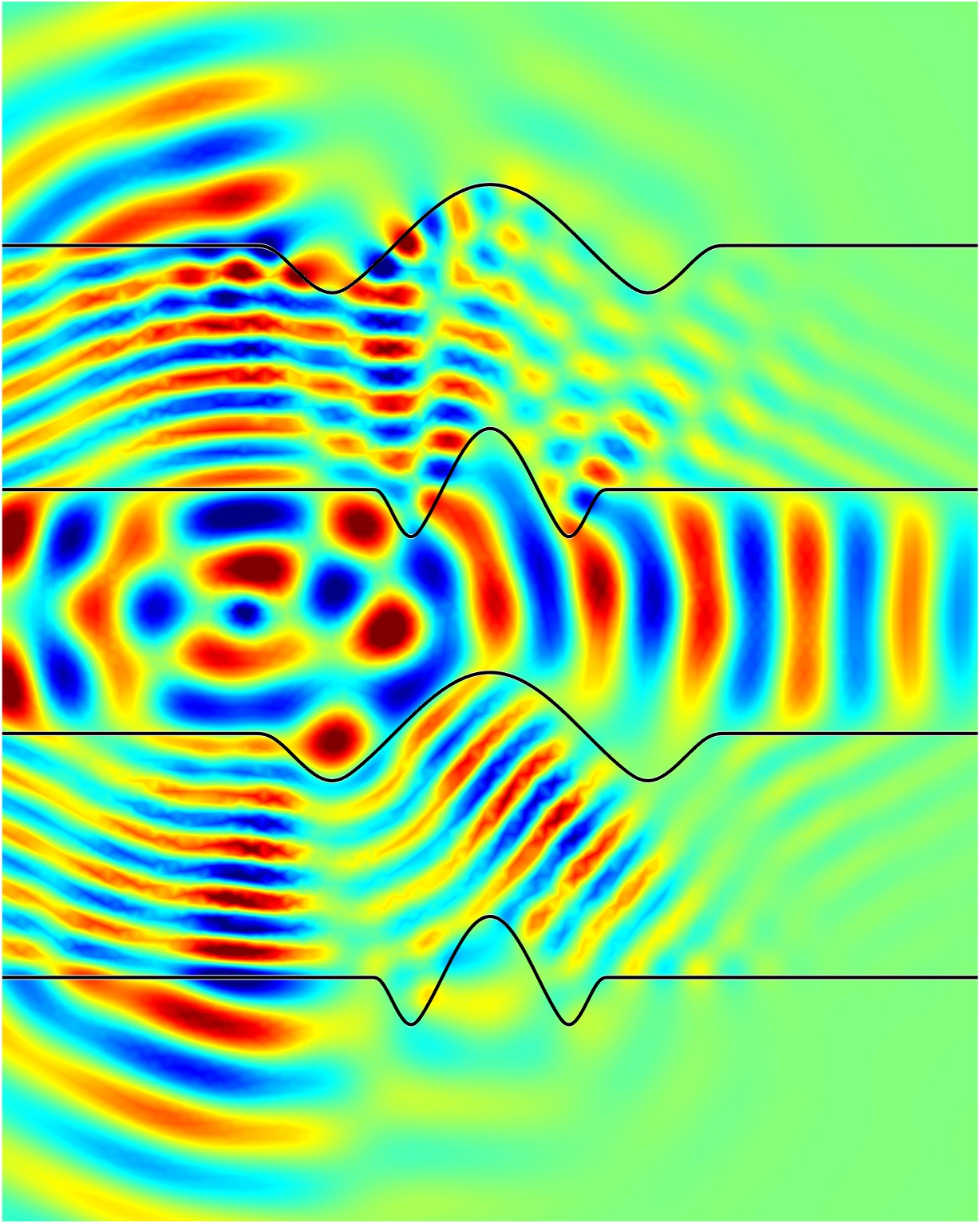}
    \includegraphics[width=0.2\linewidth]{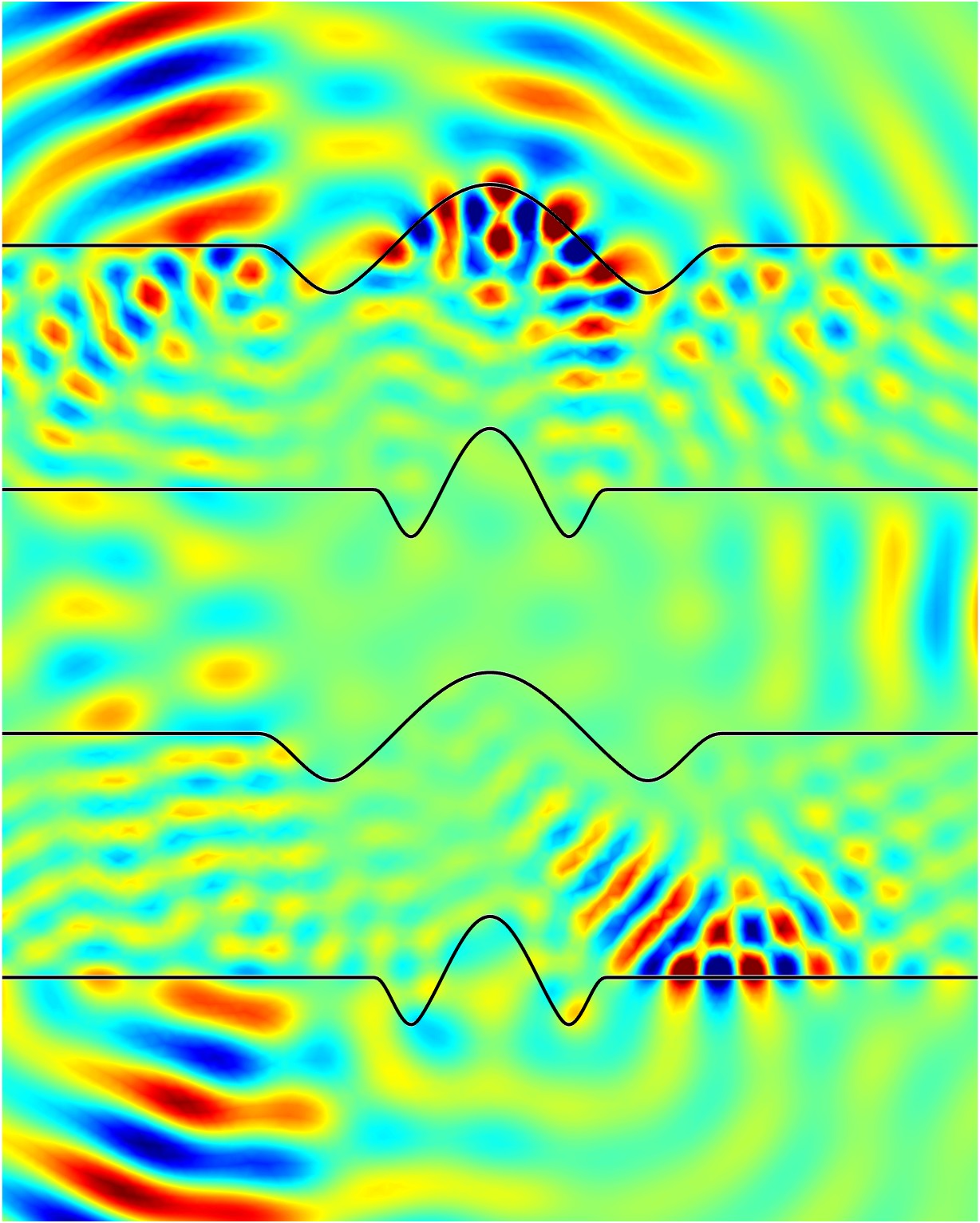}
    \caption{Example 2. Real parts of the total fields at $t=4,6,8,10$ (from left to right) resulted from the FTH-MS method for the scattering of a point source in a 5-layered medium.}
    \label{fig:Example2-5layer-sol}
\end{figure}

{\bf Example 2.} (3\&5-layered problems in 2D) In this example, we study the performance of the proposed FTH-MS solver for the general $N$-layered medium problems with $N=3$ or $N=5$, see Figures~\ref{fig:Example2-3layer-sol} and \ref{fig:Example2-5layer-sol} for the geometric settings. When $N=3$, we set the wave speeds $c_1 = 1$, $c_2 = 1.5$, and $c_3 = 1$. Consider the incidence of a point source located at $\bm z_0=(0,2)$ with the same parameters as in Example 1. The interface $\Gamma_i$ is set to be the plane $x_2=-2(i-1)$ with local perturbation $x_2=0.5\cos(2x_1)\eta(x_1,1,2)-2(i-1)$ for $i=1,2$.
Figure~\ref{fig:Example2-3layer-error}(a,b) presents the time trace of the scattered field $u_2$ at $x=(-1,-1)$ and convergence of the maximum errors $\varepsilon_\infty(\bm x_j)$ at $\bm x_1=(-1,1)$, $\bm x_2=(-1,-1)$ and $\bm x_3=(-1,-3)$ for $T=10$ with respect to the number of equi-spaced frequencies. We also plot in Figure~\ref{fig:Example2-3layer-error}(c) the time trace of the numerical errors $\varepsilon(\bm x,t)$ at $\bm x=(-1,-1)$ for different values of $M$, i.e, the number of multiple-scattering iterations to construct the multiple scattering solutions $\widetilde{u}_j^M$ defined in (\ref{eq:Nlayer-ms-add}). The results show in Figure~\ref{fig:Example2-3layer-error}(b,c) demonstrate the fast convergence and high accuracy of the proposed FTH-MS solver and clearly verify the equivalence result proved in Theorem~\ref{thm:equivalence} which indicates that for a fixed terminal time $T>0$, only a finite number of multiple-scattering iterations is required to get a highly accurate numerical approximation of the solutions to multi-layered medium wave equation problems for all $t\in[0,T]$. The numerical total fields at $t=3,5,7,9$ are shown in Figure~\ref{fig:Example2-3layer-sol}. 

Next, we consider the 5-layered medium problem and test the efficiency of the proposed FTH-MS solver. The wave speeds are set to be $c_i = 1+(j\mod 2)$ for $j=1,2,\cdots,5$. We set the interface $\Gamma_i=\{(x_1,x_2):x_1\in \mathbb{R},x_2=-2(i-1)+0.5\cos(2x_1)\eta(x_1,1,2)\}$ for $i=1,3$ and $\Gamma_i=\{(x_1,x_2):x_1\in \mathbb{R},x_2=-2(i-1)+0.5\cos(2x_1)\eta(2x_1,1,2)\}$ for $i=2,4$. The incident field is generated by a point source, defined as previously, located at $\bm z_0=(-2, -3)$. Figure~\ref{fig:Example2-5layer-error} shows the solution trace at $\bm x=(-1,-3)$ and the relative error of multiple scattering series $\widetilde u_3^{M}$ for different value of $M$ which also verify the high accuracy of the proposed FTH-MS solver. The numerical total fields at various times are presented in Figure~\ref{fig:Example2-5layer-sol}. 

\begin{figure}[htbp]
    \centering
    \includegraphics[width=0.6\linewidth]{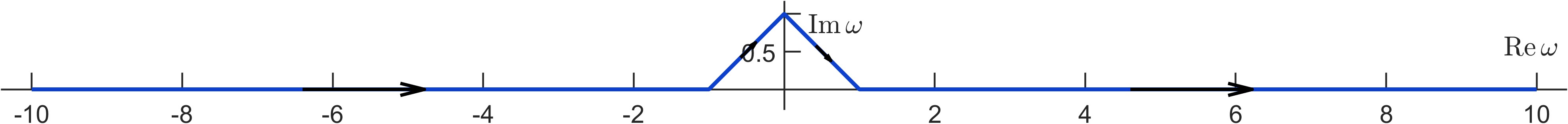}
    \caption{Example 3. Deformed  integration path of Fourier integral.}
    \label{fig:fourier_contour}
\end{figure}

\begin{figure}[htbp]
    \centering
    \subfloat[]{
        \includegraphics[width=0.3\linewidth]{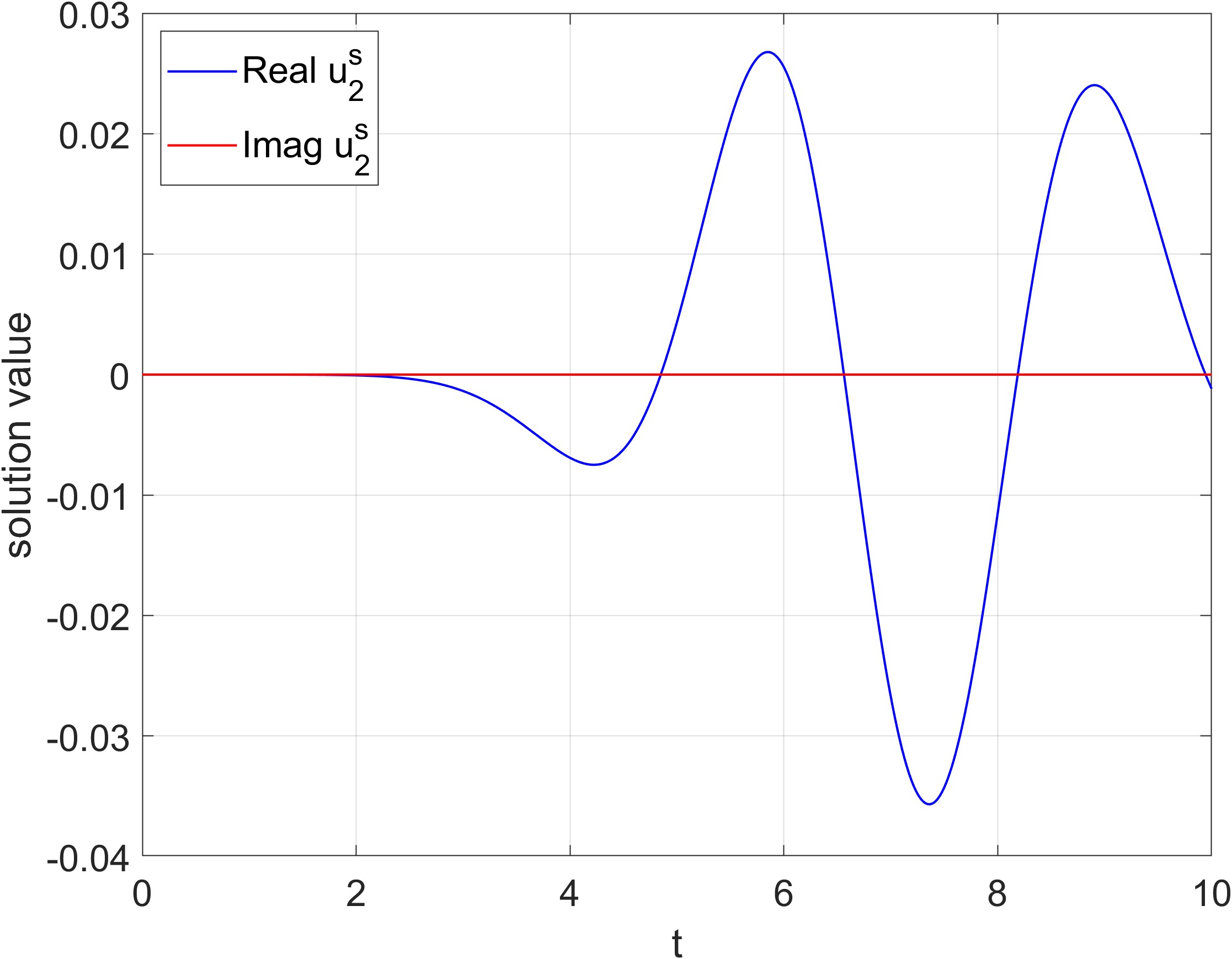}
    }
    \subfloat[]{
        \includegraphics[width=0.3\linewidth]{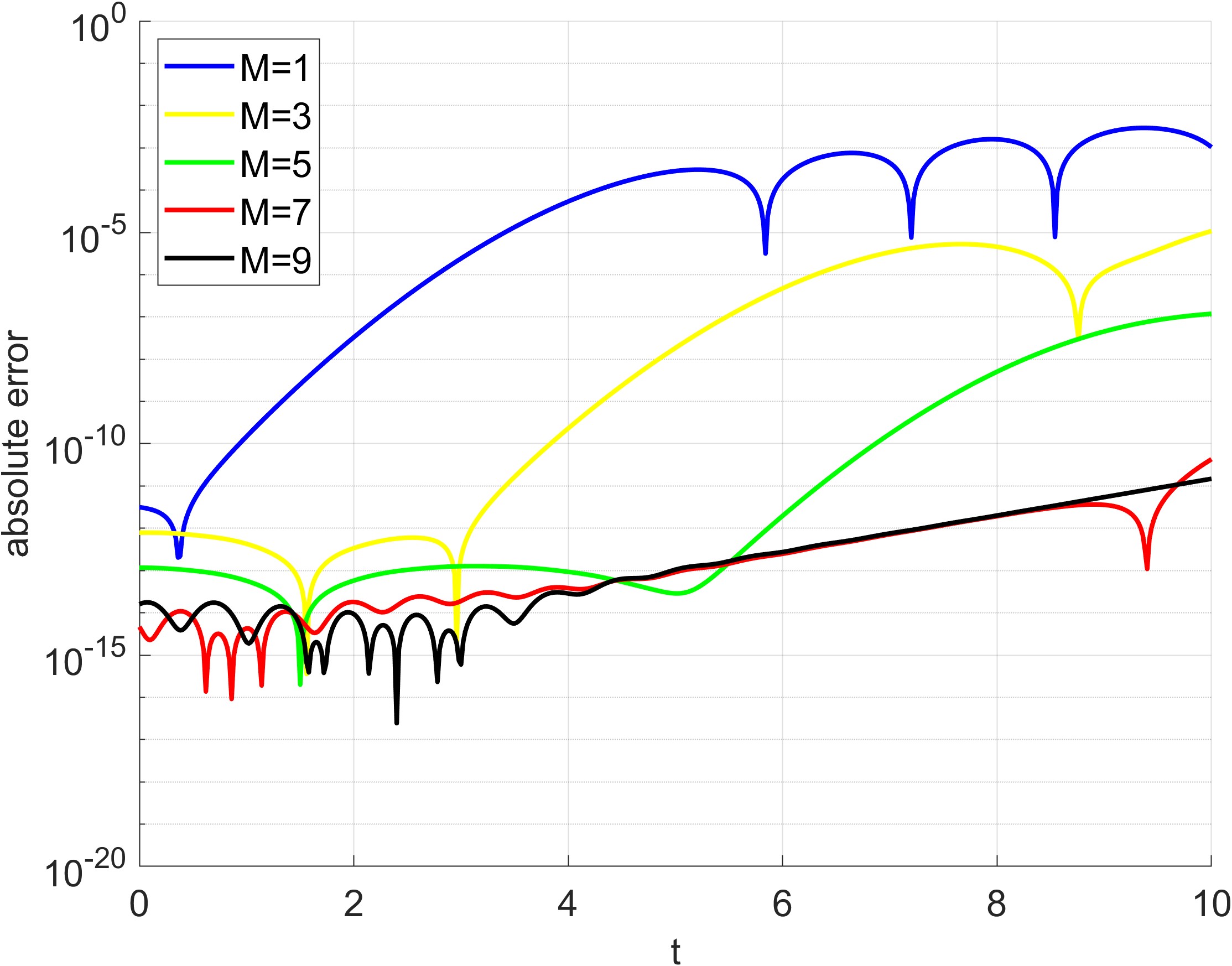}
    }
    \caption{Example 3. Numerical results of the FTH-MS method for the 3-layered medium problem with a point source ($\omega_0=0$): (a) time trace of the scattered field $u_2$ at $\bm x=(-1,-1)$; (b) time trace of the numerical errors $\varepsilon(\bm x,t)$ at $\bm x=(-1,-1)$ for different values of $M$.}
    \label{fig:Example2-3layer-0-error}
\end{figure}

{\bf Example 3.} (3-layered problem in 2D \& low-frequency case) Next, we consider the scattering of a general plane wave which contains low-frequency information in a 3-layered medium in two-dimensions. We set $t_0=5$, $\theta^\mathrm{inc}=\pi/3$ and $\omega_0=0$ which indicates that the solution information is concentrated on a wide frequency interval $[-W,W]$ which implies that both low-frequency and regular-frequency problems should be treated. As aforementioned in Remark~\ref{rem:low-freq}, one way to avoid the destroyed accuracy of PML-BIE method for low-frequency case is to deform the integral for the inverse Fourier transform above the real axis~\cite{chew1999waves}. As shown in Figure~\ref{fig:fourier_contour}, we choose a special and new integral path of the frequencies to calculate the inverse Fourier transform $\mathbb{F}^{-1}(\widetilde U_{j,m}^+)$ and $\mathbb{F}^{-1}(\widetilde U_{j+1,m}^-)$. 
Figure \ref{fig:Example2-3layer-0-error} shows the solution trace at $\bm x=(-1,-1)$ and the relative error of multiple scattering series $\widetilde u_2^{M}$  for different value of $M$. As shown in Figure \ref{fig:Example2-3layer-0-error}(b), an increase of the numerical error $\varepsilon$ as $\sim\mathcal{O}(e^t)$ happens due to the fact that the new integral path will bring an exponential growth factor in the inverse Fourier transform depending on the largest distance of the new integral path to zero, as predicted in Remark \ref{rem:low-freq}.

\begin{figure}[htbp]
    \centering
    \includegraphics[width=0.2\linewidth]{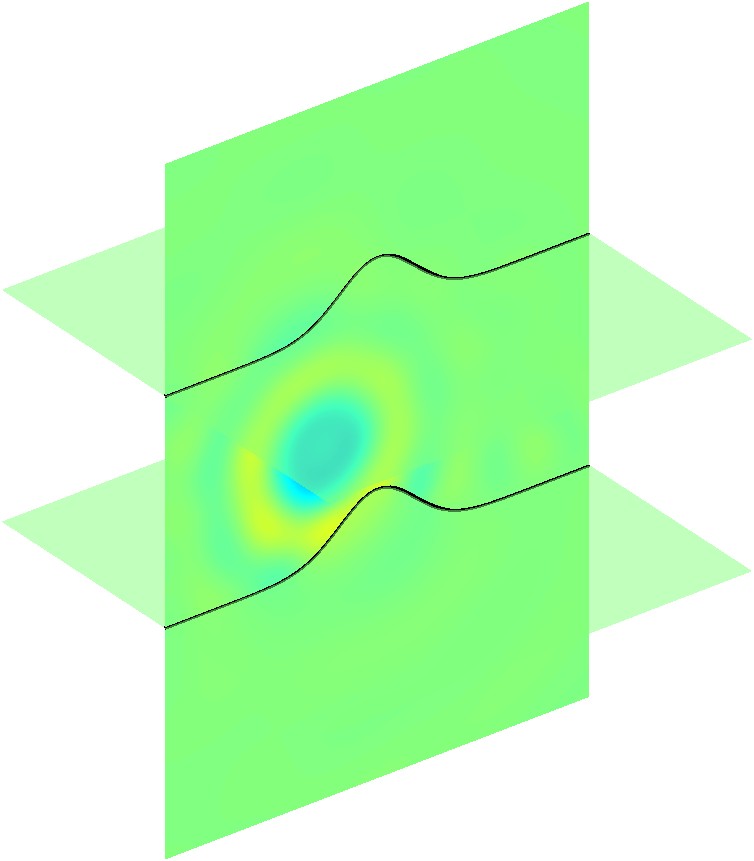}
    \includegraphics[width=0.2\linewidth]{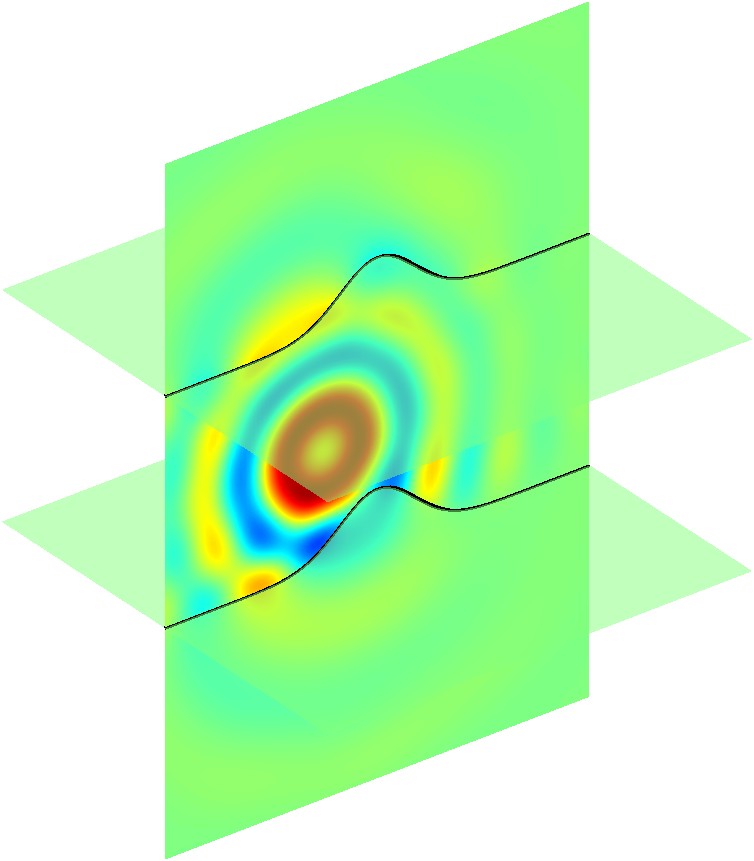}
    \includegraphics[width=0.2\linewidth]{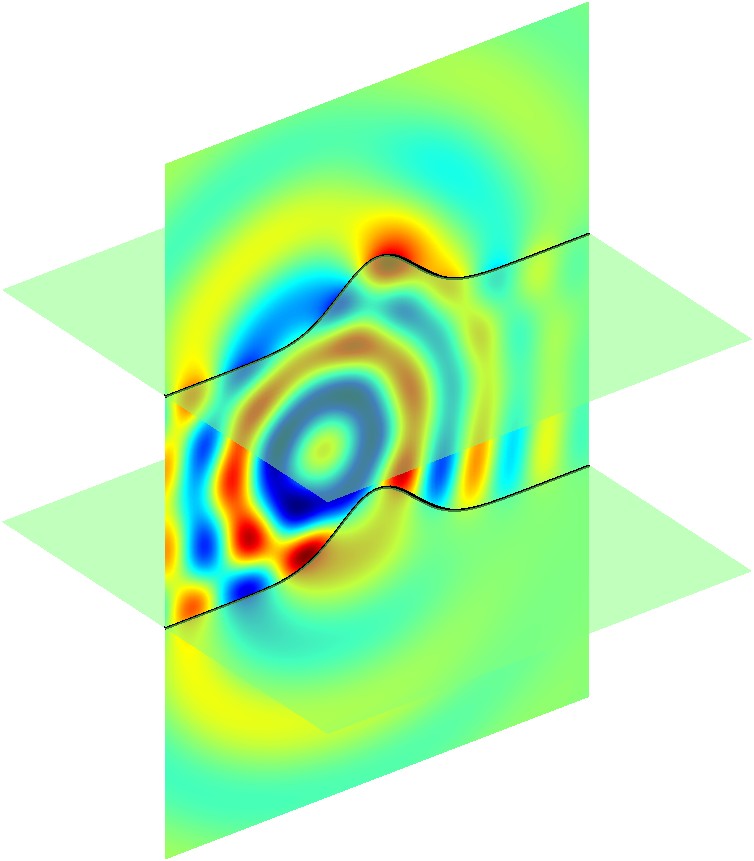}
    \includegraphics[width=0.2\linewidth]{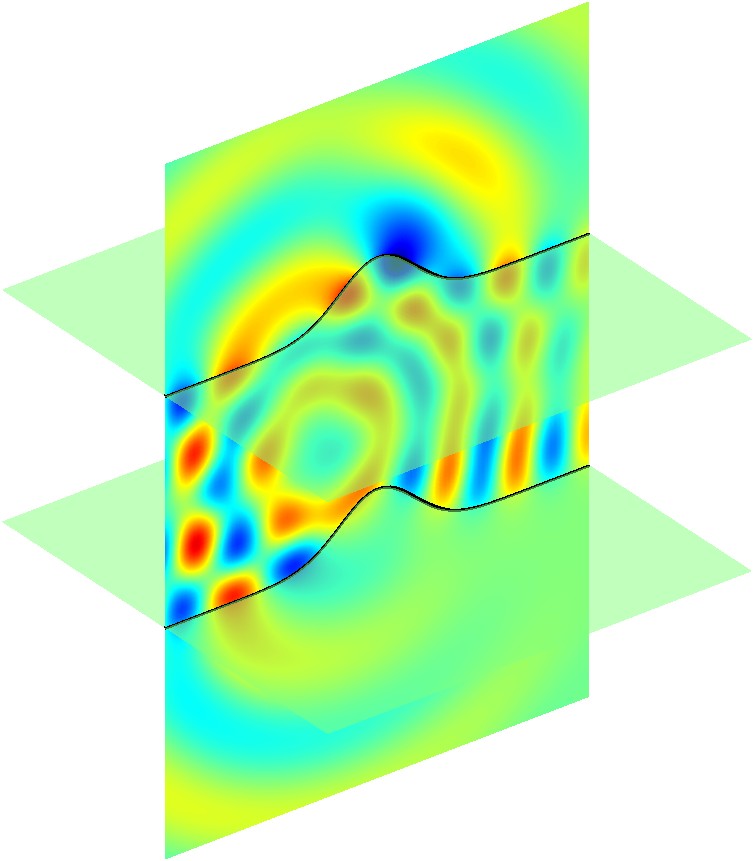}
    \caption{Example 4. Real parts of total fields at $t=4,5,6,7$ (from left to right) resulted from the FTH-MS method for the scattering of a point source in a 3-layered medium in three dimensions.}
    \label{fig:example4-3layer-sol}
\end{figure}

{\bf Example 4.} (3-layered problem in 3D) In this last example, we study the application of the proposed FTH-MS solver for the more challenging three-dimensional layered-medium wave equation problem with $N=3$. The incident field is chose to be a Gauss-pulse point source which is an inverse Fourier transform of a frequency-domain function
\begin{equation}
    \label{eq:incident_field_freq}
U_{\text{point}}^{\text{inc}}(\bm x,\omega,\bm z_0)(\bm x, \omega) = e^{-\frac{(\omega-\omega_0)^2}{2}} e^{i\omega t_0} \frac{e^{i\kappa_2|\bm x-\bm z_0|}}{|\bm x-\bm z_0|}.
\end{equation}
where $t_0=5$, $\omega_0=10$ and $\bm z_0=(-0.5,0.5,-1)$. For simplicity, we set the interfaces $\Gamma_j,j=1,2$ to be 
\begin{equation*}
\Gamma_j=\{\bm x\in\R^3: x_3=0.5e^{-4(x_1^2+x_2^2)}-2(j-1)\},\quad j=1,2.
\end{equation*}
which can be regarded as a local perturbation of the flat planes. The wave speeds in each layer is given by $c_j= 1+(j\mod 2)$ for $j=1,2,3$. The PML-BIE equation method is utilized to solve the two-layered medium subproblems for 51 equi-spaced frequencies for $\omega\in [1,19]$. Numerical solutions at various times resulted from the FTH-MS solver with multiple-scattering iterations $M=5$ is shown in Figure~\ref{fig:example4-3layer-sol}. Further acceleration for the three-dimensional problems will be left for future work.

\section{Conclusion and further extensions}
\label{sec:6}

This paper proposes a novel idea of decomposing the wave equation problem in a complicated multi-layered medium as a multiple-scattering series of a sequence of multiple wave equation problems in two-layered media, which avoids the construction of a global discretized system for the entire multi-layered structure, and then develops a highly accurate FTH-MS solver based on the Fourier transform and PML-BIE method. Both multiplicative-
and additive-type strategies are developed and equivalence between the original solution and the re-modeled multiple-scattering series has been established in light of the finite propagation speed. Further extensions and analysis of the proposed FTH-MS solver lie in
\begin{itemize}
\item Techniques to treat general incidences consisting of low-frequency information and achieve stable long-time simulation;
\item Efficient way to accelerate the solver for the numerical evaluation of three-dimensional problems, for example the fast multipole method for PML-BIE~\cite{goodwill2025fast};
\item Application to more complicated elastic and electromagnetic wave equation problems in multi-layered media~\cite{BaoEtAl2024Highly} as well as the layered-medium problem with deep cavity~\cite{Lai2014Cavity}.
\end{itemize}


\section*{Acknowledgments} This work is partially supported by the
National Key R\&D Program of China (2024YFA1016000), Beijing Natural 
Science Foundation (JR26005), the Strategic Priority Research Program of the Chinese Academy of Sciences (XDB0640000), and the NSFC grants (12171465 and 12288201).

\small
\bibliographystyle{siam}
\bibliography{refs}

\end{document}